\DeclareMathOperator{\Ker}{Ker}
\theoremstyle{definition}
\newtheorem{definition}{Definition}
\newtheorem{example}[definition]{Example}
\newtheorem{remark}[definition]{Remark}
\theoremstyle{plain}
\newtheorem{lemma}[definition]{Lemma}
\newtheorem{proposition}[definition]{Proposition}
\newtheorem{theorem}[definition]{Theorem}
\newtheorem{corollary}[definition]{Corollary}
\newcommand\PL{{\mathcal{P}}_{\textit{\l}}}
\newcommand\IBSL{\mathsf{IBSL}}
\newcommand{\alg}{\mathbf}
\newcommand{\class}{\mathsf}
\newcommand{\logic}{\textsc}
\newcommand{\B}{\logic{B}^{\mathrm{e}}}
\newcommand{\NBe}{\logic{NB}^{\mathrm{e}}}
\newcommand\A{{\mathbf A}}
\newcommand\C{{\mathbf C}}
\newcommand\D{{\mathbf D}}
\newcommand\Fm{\mathbf{Fm}}
\newcommand\WK{{\mathbf{WK}}}
\newcommand\WKt{\WK^{\mathrm{e}}}
\providecommand*{\Dashv}{%
  \mathrel{%
    \mathpalette\@Dashv\vDash
  }%
}
\newcommand*{\@Dashv}[2]{%
  \reflectbox{$\m@th#1#2$}%
}
\newcommand\Ho{\mathrm{H_{0}}}
\newcommand{\Jzero}{J_{_0}}
\newcommand{\Juno}{J_{_1}}
\newcommand{\Jdue}{J_{_2}}
\newcommand{\Ji}{J_{_k}}
\newcommand\ant{\nicefrac12}
\newcommand\pair[1]{{\langle#1\rangle}}
\newcommand{\Var}{\mathnormal{V\mkern-.8\thinmuskip ar}}
\title[]{On the structure of Bochvar algebras}
\author{S. Bonzio}\address{Department of Mathematics and Computer Science, \\
University of Cagliari, Italy.\\ \href{mailto:stefano.bonzio@unica.it}{stefano.bonzio@unica.it}}
\author{M. Pra Baldi}
\address{FISPPA Department, University of Padua, Italy.\\ \href{mailto:michele.prabaldi@unipd.it}{michele.prabaldi@unipd.it}}
\keywords{Kleene logic, Bochvar external logic, P\l onka sums, algebraic logic.}
\subjclass[2020]{Primary: 03G25. Secondary: 03B60.}
\begin{document}

\maketitle

\begin{abstract}
Bochvar algebras consist of the quasivariety $\mathsf{BCA}$ playing the role of equivalent algebraic semantics for Bochvar (external) logic, a logical formalism introduced by Bochvar \cite{Bochvar} in the realm of (weak) Kleene logics. In this paper, we provide an algebraic investigation of the structure of Bochvar algebras. In particular, we prove a representation theorem based on P\l onka sums and investigate the lattice of subquasivarieties, showing that Bochvar (external) logic has only one proper extension (apart from classical logic), algebraized by the subquasivariety $\mathsf{NBCA}$ of $\mathsf{BCA}$. Furthermore, we address the problem of (passive) structural completeness ((P)SC) for each of them, showing that $\class{NBCA}$ is SC, while $\class{BCA}$ is not even PSC. Finally, we prove that both $\mathsf{BCA}$ and $\mathsf{NBCA}$ enjoy the Amalgamation Property (AP).
\end{abstract}



The recent years have seen a renaissance of interests and studies around weak Kleene logics, logical formalisms that were considered, in the past, not particularly attractive in the panorama of three-valued logics, due to reputed ``odd'' behavior of the third-value. The late (re)discovery of weak Kleene logic regards, almost exclusively, \emph{internal} rather than \emph{external} logics: the latter, in essence, consisting of linguistic expansions of the former. More precisely, here, for external Kleene logics we understand the external version of Bochvar logic (introduced by Bochvar himself \cite{Bochvar}) and of Paraconsistent weak Kleene logic (introduced by Segerberg \cite{Segerberg65}).

The idea of considering the external connectives, thus enriching the (internal) logical vocabulary, is originally due to Russian logician D. Bochvar \cite{Bochvar}. His aim was, from the one side, to adopt a non-classical base to get rid of set-theoretic and semantic paradoxes (by interpreting them to $\ant$) and, from the other, to preserve the expressiveness of classical logic. Although his attempt failed in reaching the former purpose (as it can be shown that paradoxes resurface \cite{Urquhart2001}), the work of Bochvar has left us with a logic extremely rich in expressivity and whose potential has yet to be discovered and applied in its full capacity. Indeed, external weak Kleene logics have the advantage of limiting the infectious behavior of the third value -- the feature making them apparently little attractive -- which is confined to internal formulas only, and to recover all the consequences of classical logic in the purely external part of the language (this holding true for Bochvar external logic only). We believe that these features may turn out to be very useful in computer science and AI, providing new tools for modeling errors, concurrence and debugging.

From a mathematical viewpoint, internal weak Kleene logics show  quite  a weak connection with respect to their algebraic counterparts, as they are examples of the so-called non-protoalgebraic logics. On the other hand, a recent work \cite{Ignoranzasevera} has shown that Bochvar external logic is algebrizable with the quasivariety of Bochvar algebras (introduced in \cite{FinnGrigolia}) as its equivalent algebraic semantics. This observation gives a justified starting point for a deeper, intertwined, investigation of Bochvar external logic and Bochvar algebras, which is the main scope of the present work. 

The flourishing trend of algebraic research around weak Kleene logics has strongly connected them with the algebraic theory of P\l onka sums (see e.g. \cite{Bonziobook}). Recently, the tools offered by P\l onka sums have fruitfully been extended to the structural analysis of residuated structures, establishing a natural connection with substructural logics (see \cite{Jenei}, \cite{Jipsen}). In line with this trend, we will further extend the application of the method. Not surprisingly, since Bochvar external logic is a linguistic expansion of Bochvar logic, we will show that the construction of the P\l onka sum will play an important role in characterizing the structure of Bochvar algebras.


The paper is organized into five sections: in Section \ref{sec: Bochvar logic}, we present Bochvar external logic as the logic induced by a single matrix, and we recall the axiomatization due to Finn and Grigolia. In Section \ref{sec: 3}, we first introduce the quasivariety of Bochvar algebras and prove some basic facts, including that any Bochvar algebra has an involutive bisemilattice reduct. We then proceed by describing the structure of Bochvar algebras: the main result is a representation theorem in terms of P\l onka sums of Boolean algebras plus some additional operations. Section \ref{sec: sottoquasi e estensioni} is concerned with the study of  the lattice of  subquasivarieties of the quasivariety of Bochvar algebras, which is dually isomorphic to the lattice of extension of Bochvar external logic. We show that there are only three non trivial quasivarieties of Bochvar algebras and we address the problem of (passive) structural completeness for each of them. Finally, in Section \ref{sec: bridge properties}, we show that every quasivariety of Bochvar algebras has the amalgamation property. We conclude the paper with Appendix \ref{appendix}, where we provide a new (quasi)equational basis for the quasivariety of Bochvar algebras. The proposed axiomatization significantly simplifies the traditional one introduced by Finn and Grigolia \cite{FinnGrigolia}.



\section{Bochvar external logic}\label{sec: Bochvar logic}


Kleene's three-valued logics -- introduced by Kleene in his \emph{Introduction to Metamathematics} \cite{Kleene} -- are traditionally divided into two families, depending on the meaning given to the connectives: \emph{strong Kleene} logics -- counting strong Kleene and the logic of paradox -- and \emph{weak Kleene} logics, namely Bochvar logic \cite{Bochvar} and paraconsistent weak Kleene logic (sometimes referred to as Hallden's logic \cite{Hallden}). Kleene logics are traditionally defined over the (algebraic) language of classical logic. However, the intent of one of the first developers of these formalisms, D. Bochvar, was to work within an enriched language allowing to express all classical ``two-valued'' formulas -- which he referred to as \emph{external formulas} -- beside the genuinely ``three-valued'' ones.

The result of this choice is the language $\mathcal{L}\colon\langle \neg,\lor,\land,\Jzero,\Juno,\Jdue,0,1\rangle$ (of type $(1,2,2,1,1,1,0,0)$), which is obtained by enriching the classical language by three unary connectives $\Jzero,\Juno,\Jdue$ (and the constants $0,1$).
The language $\mathcal{L}$ can be referred to as \emph{external language}, in contrast with the traditional language upon which Kleene logics are defined. Let  $\Fm$ refer to the formula algebra over the language $\mathcal{L}$, and to $Fm$ as its universe.

The intended algebraic interpretation of the language $\mathcal{L}$ is traditionally given via the three-elements algebra $\WKt=\pair{\{0,1,\ant\}, \neg,\lor,\land,\Jzero,\Juno,\Jdue,0,1}$ displayed in Figure \ref{fig:WKe}. 
\begin{figure}[h]\
\begin{center}\renewcommand{\arraystretch}{1.25}
\begin{tabular}{>{$}c<{$}|>{$}c<{$}}
   & \lnot  \\[.2ex]
\hline
  1 & 0 \\
  \ant & \ant \\
  0 & 1 \\
\end{tabular}
\qquad
\begin{tabular}{>{$}c<{$}|>{$}c<{$}>{$}c<{$}>{$}c<{$}}
   \lor & 0 & \ant & 1 \\[.2ex]
 \hline
       0 & 0 & \ant & 1 \\
       \ant & \ant & \ant & \ant \\          
       1 & 1 & \ant & 1
\end{tabular}
\qquad
\begin{tabular}{>{$}c<{$}|>{$}c<{$}>{$}c<{$}>{$}c<{$}}
   \land & 0 & \ant & 1 \\[.2ex]
 \hline
     0 & 0 & \ant & 0 \\
     \ant & \ant & \ant & \ant \\          
    1 & 0 & \ant & 1
\end{tabular}
\\
\vspace{10pt}
\begin{tabular}{>{$}c<{$}|>{$}c<{$}}
  \varphi & \Jzero \varphi \\[.2ex]
\hline
  1 & 0 \\
  \ant & 0 \\
  0 & 1 \\
\end{tabular}
\qquad
\begin{tabular}{>{$}c<{$}|>{$}c<{$}}
  \varphi & \Juno\varphi  \\[.2ex]
\hline
  1 & 0 \\
  \ant & 1 \\
  0 & 0 \\
\end{tabular}
\qquad
\begin{tabular}{>{$}c<{$}|>{$}c<{$}}
  \varphi & J_{_2} \varphi \\[.2ex]
\hline
  1 & 1 \\
  \ant & 0 \\
  0 & 0 \\
\end{tabular}

\end{center}
\caption{The algebra $\WKt$.}\label{fig:WKe}
\end{figure}


The value $\ant$ is traditionally read as ``meaningless'' (see e.g. \cite{ferguson2017meaning} and \cite{SmuczFerguson}) due to its infectious behavior.
It is immediate to check that the $\vee,\land$-reduct of $\WKt$ is not a lattice (it is an involutive bisemilattice), as it fails to satisfy absorption, hence the operations $\vee$ and $\land$ induce two (different) partial orders. In the following, we will refer to $\leq$ as the one induced by $\lor$ (i.e. $x\leq y$ iff $x\lor y = y$). With reference to such order, it holds $ 0 < 1 < \ant$.



The language $\mathcal{L}$  allows to define the so-called \emph{external} formulas (see Definition \ref{def: formula esterna}), namely those that are evaluated into $\{0,1\}$ \emph{only} (which is the universe of a Boolean subalgebra of $\WKt$), for any homomorphism $h\colon\Fm\to\WKt$ ($\Ji \varphi$, for any $\varphi\in Fm$ and $k\in \{0,1,2\}$, are examples of external formulas). 

%


\begin{definition}
Bochvar \emph{external} logic $\B$ is the logic induced by the matrix $\pair{\WKt,\{1\}}$. 
\end{definition}
In words, $\B$ is the logic with the only distinguished value $1$.\footnote{The different choice (on the same formula algebra) of the truth set $\{1,\ant\}$ defines the logic $\Ho$ studied by Segerberg \cite{Segerberg65}.}
$\B$ is a linguistic expansion of Bochvar logic $\mathsf{B}$, which is defined by the matrix $\pair{\WK, \{1\}}$, where $\WK$ is the $\Ji$-free reduct of $\WKt$. Since $\B$ is defined by a finite set of finite matrices, it is a finitary logic, in the sense that $\Gamma\vdash_{\B}\varphi$ entails $\Delta\vdash_{\B}\varphi$ for some finite $\Delta\subseteq\Gamma$ (the relation among $\B$ and other three-valued logics can be found in \cite{Ciucci}).

The following technicalities are needed to introduce a Hilbert-style axiomatization of $\B$.

\begin{definition}\label{def: variabili aperte/coperte}
An occurrence of a variable $x$  in a formula $\varphi$ is \emph{open} if it does not fall under the scope of $\Ji$, for every $k\in\{0,1,2\}$. A variable $x$ in $\varphi$ is \emph{covered} if all of its occurrences are not open, namely if  every occurrence of $x$ in $\varphi$ falls under the scope of $\Ji$, for some $k\in\{0,1,2\}$.
 \end{definition}

The intuition behind the notion of external formulas is made precise by the following.

\begin{definition}\label{def: formula esterna}
A formula $\varphi\in Fm$ is called \emph{external} if all its variables are covered. 
\end{definition}


A Hilbert-style axiomatization of $\B$ has been introduced by Finn and Grigolia \cite{FinnGrigolia}. In order to present it, let $$\varphi\equiv\psi\coloneqq\bigwedge_{i=0}^{2}J_{_i}\varphi\leftrightarrow J_{_i}\psi, $$
and $\alpha,\beta,\gamma$ denote external formulas.

\vspace{5pt}
\noindent
\textbf{Axioms}
\begin{itemize}
\item[(A1)] $(\varphi\lor\varphi)\equiv\varphi$;
\item[(A2)] $(\varphi\lor\psi)\equiv(\psi\lor\varphi)$;
\item[(A3)] $((\varphi\lor\psi)\lor \chi)\equiv(\varphi\lor(\psi\lor\chi))$;
\item[(A4)] $(\varphi\land(\psi\lor \chi)\equiv((\varphi\land\psi)\lor(\varphi\land\chi))$;
\item[(A5)] $\neg(\neg \varphi)\equiv\varphi$;
\item[(A6)] $\neg 1\equiv 0$;
\item[(A7)] $\neg( \varphi\lor\psi)\equiv(\neg\varphi\land\neg\psi)$;
\item[(A8)] $0\vee \varphi\equiv\varphi$;
\item[(A9)] $J_{_2}\alpha\equiv\alpha$;
\item[(A10)] $J_{_0}\alpha\equiv\neg\alpha$;
\item[(A11)] $J_{_1}\alpha\equiv 0$;
\item[(A12)] $J_{_i}\neg\varphi\equiv J_{_{2-i}}\varphi$, for any $i\in\{0,1,2\}$;
\item[(A13)] $ J_{_i}\varphi \equiv\neg(J_{_j}\varphi\vee J_{_k}\varphi)$, with $i\neq j\neq k\neq i$;

\item[(A14)] $(J_{_i}\varphi\vee\neg J_{_i}\varphi)\equiv 1$, with $i\in\{0,1,2\}$;

\item[(A15)] $((J_{_i}\varphi\vee J_{_k}\psi)\land J_{_i}\varphi)\equiv J_{_i}\varphi$, with $i,k\in\{0,1,2\}$;
\item[(A16)] $(\varphi\lor J_{_i}\varphi)\equiv \varphi$, with $i\in\{1,2\}$;
\item[(A17)] $J_{_0}(\varphi\lor\psi)\equiv J_{_0}\varphi\land J_{_0}\psi$;
\item[(A18)] $J_{_2}(\varphi\lor\psi)\equiv ( J_{_2}\varphi\land J_{_2}\psi)\vee( J_{_2}\varphi\land J_{_2}\neg\psi)\vee ( J_{_2}\neg \varphi\land J_{_2}\psi) $;

\item[(A19)] $\alpha \rightarrow (\beta \rightarrow \alpha) $; 

\item[(A20)] $( \alpha \rightarrow ( \beta \rightarrow \gamma ) ) \rightarrow (( \alpha \rightarrow \beta) \rightarrow ( \alpha \rightarrow \gamma))$; 
\item[(A21)] $\alpha \wedge \beta \rightarrow \alpha $; 
\item[(A22)] $\alpha \wedge \beta \rightarrow \beta $;
\item[(A23)]$\left( \alpha \rightarrow \beta \right) \rightarrow
\left( \left( \alpha \rightarrow \gamma \right) \rightarrow \left( \alpha
\rightarrow \beta \wedge \gamma \right) \right) $; \item[(A24)]$\alpha \rightarrow \alpha \vee \beta $; 
\item[(A25)]$\beta \rightarrow \alpha \vee \beta $;
\item[(A26)] $\left( \alpha \rightarrow \gamma \right) \rightarrow
\left( \left( \beta \rightarrow \gamma \right) \rightarrow \left( \alpha
\vee \beta \rightarrow \gamma \right) \right) $;

\item[(A27)] $\left( \alpha \rightarrow \beta \right) \rightarrow
\left( \left( \alpha \rightarrow \lnot \beta \right) \rightarrow \lnot
\alpha \right) $; 

\item[(A28)]$\alpha \rightarrow \left( \lnot \alpha
\rightarrow \beta \right) $; 

\item[(A29)] $\lnot \lnot \alpha \rightarrow \alpha $.
\end{itemize}
\vspace{10pt}
\noindent
\textbf{Deductive rule}
\[
\begin{prooftree}
  \Hypo{\varphi}  \Hypo{\varphi\to\psi}
  \Infer[left label = {[MP]}]2 \psi
\end{prooftree}
\]

\vspace{10pt}

Observe that the axiomatization contains a set of axioms (A19-A29), which, together with the rule of \emph{modus ponens}, yields a complete axiomatization for classical logic (relative to external formulas). 


The fact that $\B$ coincides with the logic induced by the above introduced Hilbert-style axiomatization has been proved in \cite{Ignoranzasevera} (Finn and Grigolia \cite[Theorem 3.4]{FinnGrigolia} only proved a weak completeness theorem for $\B$). We will henceforth indicate by $(\Fm, \vdash_{\B})$ both the consequence relation induced by the matrix $\pair{\WKt,\{1\}}$ and the one induced by the above Hilbert-style axiomatization.

The logic $\B$ is algebraizable with the quasivariety of Bochvar algebras ($\mathsf{BCA}$) -- which will be properly introduced in the next section -- as its equivalent algebraic semantics. This means that there exists maps $\tau\colon Fm\to \mathcal{P}(Eq)$, $\rho\colon Eq\to \mathcal{P}(Fm)$ from formulas to sets of equations and from equations to sets of formulas such that
\[\gamma_{1},\dots,\gamma_{n}\vdash_{\mathsf{B}_{e}}\varphi\iff\tau(\gamma_{1}),\dots,\tau(\gamma_{n})\vDash_{\mathsf{BCA}}\tau(\varphi)\]
and
\[\varphi\thickapprox\psi\Dashv\vDash_{\mathsf{BCA}}\tau\rho(\varphi\thickapprox\psi).\]

The above conditions are verified by setting  $\tau(\varphi)\coloneqq \{x\thickapprox 1\}$ and  $\rho(\varphi\thickapprox\psi)\coloneqq\{\varphi\equiv\psi\}$ (see \cite{Ignoranzasevera} for details).  Moreover, Bochvar external logic enjoys the (global) deduction theorem, which we recall here. 
\begin{theorem}[Deduction Theorem]\label{th: deduzione per Be}
$\Gamma, \psi\vdash_{\B}\varphi$ if and only if $\Gamma\vdash_{\B} \Jdue\psi\to\Jdue\varphi$.
\end{theorem}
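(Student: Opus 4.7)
The plan is to prove both directions semantically, relying on the fact (stated earlier in the excerpt) that $\vdash_{\B}$ coincides with the consequence relation of the single matrix $\pair{\WKt,\{1\}}$, so that syntactic completeness takes care of converting semantic arguments into derivations.

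The key observation is that $\Jdue\chi$ is always an external formula and, from its truth table (Figure \ref{fig:WKe}), one reads off $h(\Jdue\chi)=1$ iff $h(\chi)=1$, for every homomorphism $h\colon\Fm\to\WKt$. Consequently $\Jdue\psi\to\Jdue\varphi$ is also external, so its value lies in $\{0,1\}$ and is computed classically from $h(\Jdue\psi)$ and $h(\Jdue\varphi)$, giving
\[
h(\Jdue\psi\to\Jdue\varphi)=1\iff h(\psi)\neq 1\ \text{or}\ h(\varphi)=1.
\]
Once this equivalence is in place the theorem becomes almost a tautology.

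For the direction $\Rightarrow$, assume $\Gamma,\psi\vdash_{\B}\varphi$ and take any $h$ with $h[\Gamma]\subseteq\{1\}$; if $h(\psi)\neq 1$ the displayed equivalence already yields $h(\Jdue\psi\to\Jdue\varphi)=1$, while if $h(\psi)=1$ then $h[\Gamma\cup\{\psi\}]\subseteq\{1\}$, so by hypothesis $h(\varphi)=1$ and again the target formula evaluates to $1$. For the direction $\Leftarrow$, assume $\Gamma\vdash_{\B}\Jdue\psi\to\Jdue\varphi$ and take $h$ with $h[\Gamma\cup\{\psi\}]\subseteq\{1\}$; then $h(\Jdue\psi\to\Jdue\varphi)=1$ and $h(\psi)=1$, so the displayed equivalence forces $h(\varphi)=1$. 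Matrix completeness then translates both implications into the required syntactic derivability statements.

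I expect no genuinely hard step here: the only point that deserves care is checking that $\Jdue\psi\to\Jdue\varphi$ really behaves as classical implication between its external arguments, which follows from the fact that on external formulas the connective $\to$ is (definable as) $\neg(\cdot)\vee(\cdot)$ and that the $\{0,1\}$-subalgebra of $\WKt$ is Boolean. No induction on proof length, and no appeal to the Finn--Grigolia axioms beyond invoking single-matrix completeness, is needed.
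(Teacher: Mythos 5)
Your argument is correct. Note that the paper does not actually prove this theorem: it is stated without proof, recalled from the cited earlier work where the strong completeness of the Finn--Grigolia calculus with respect to the matrix $\pair{\WKt,\{1\}}$ is established, so there is no in-paper proof to compare against. Your purely semantic route is exactly the natural one given that the paper identifies $\vdash_{\B}$ with the matrix consequence relation: the whole content reduces to the observations that $h(\Jdue\chi)=1$ iff $h(\chi)=1$ and that $\to$, read as $\neg(\cdot)\vee(\cdot)$ on the Boolean subuniverse $\{0,1\}$ of $\WKt$, is classical implication, both of which you check. The only thing worth making explicit if this were written out in full is that $\to$ is a defined connective (the official signature has no $\to$), so that $\Jdue\psi\to\Jdue\varphi$ is literally the formula $\neg\Jdue\psi\vee\Jdue\varphi$; with that said, no gap remains.
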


\section{Bochvar algebras and P\l onka sums}\label{sec: 3}
 
We assume the reader has some familiarity with universal algebra and abstract algebraic logic (standard references are \cite{Be11g,BuSa00} and \cite{Font16}, respectively). In what follows, given a class of algebras $\class{K}$, the usual class-operator symbols $I(\class{K}), S(\class{K}),H(\class{K}),P(\class{K}),P_{u}(\class{K})$ denote the closure of $\class{K}$ under isomorphic copies, subalgebras, homomorphic images, products and ultraproducts. A class of similar algebras $	\class{K}$ is a quasivariety if $\class{K}=ISPP_{u}(K)$. It is a variety if is also closed under homomorphic images or, equivalently, if $\class{K}=HSP(\class{K})$.

The class of \emph{Bochvar algebras}, $\mathsf{BCA}$ for short, is introduced by Finn and Grigolia \cite[pp. 233-234]{FinnGrigolia} as the algebraic counterpart for $\B$. 

\begin{definition}\label{def: algebre di Bochvar}
A Bochvar algebra $\A=\pair{A,\vee,\wedge, \neg, \Jzero,\Juno,\Jdue, 0,1}$ is an algebra of type $\pair{2,2,1,1,1,1,0,0}$ satisfying the following identities and quasi-identities: 
\begin{enumerate}
\item $\varphi\vee \varphi\thickapprox \varphi$; \label{BCA:1}
\item$\varphi\lor \psi \thickapprox \psi \lor \varphi$; \label{BCA:2}
\item $(\varphi\lor \psi)\lor \delta\thickapprox  \varphi\lor(\psi\lor \delta)$; \label{BCA:3}
\item $\varphi\land(\psi\lor \delta)\thickapprox(\varphi\land \psi)\lor(\varphi\land \delta)$; \label{BCA:4}
\item $\neg(\neg \varphi)\thickapprox \varphi$; \label{BCA:5}
\item $\neg 1\thickapprox 0$; \label{BCA:6}
\item $\neg( \varphi\lor \psi)\thickapprox \neg \varphi\land\neg \psi$; \label{BCA:7}
\item $0\vee \varphi\thickapprox \varphi$; \label{BCA:8}

\item $\Jdue\Ji \varphi\thickapprox\Ji \varphi$, for every $k\in\{0,1,2\}$; \label{BCA:9}

\item $\Jzero\Ji \varphi\thickapprox \neg\Ji \varphi$, for every $k\in\{0,1,2\}$; \label{BCA:10}
\item $J_{_1}\Ji \varphi \thickapprox 0$, for every $k\in\{0,1,2\}$; \label{BCA:11}
\item $J_{_k}(\neg \varphi)\thickapprox J_{_{2-k}}\varphi$, for every $k\in\{0,1,2\}$; \label{BCA:12}
\item $J_{_i}\varphi \thickapprox\neg(J_{_j}\varphi\vee J_{_k}\varphi)$, for $i\neq j\neq k\neq i$; \label{BCA:13}

\item $J_{_k}\varphi\vee\neg J_{_k}\varphi \thickapprox 1$, for every $k\in\{0,1,2\}$; \label{BCA:14}

\item $(J_{_i}\varphi\vee J_{_k}\varphi)\land J_{_i}\varphi\thickapprox J_{_i}\varphi$, for $i,k\in\{0,1,2\}$; \label{BCA:15}
\item $ \varphi\lor J_{_k}\varphi \thickapprox \varphi$, for $k\in\{1,2\}$; \label{BCA:16}
\item $J_{_0}(\varphi\lor \psi)\thickapprox J_{_0}\varphi\land J_{_0}\psi$; \label{BCA:17}
\item $J_{_2}(\varphi\lor \psi)\thickapprox ( J_{_2}\varphi\land J_{_2}\psi)\vee( J_{_2}\varphi\land J_{_2}\neg \psi)\vee ( J_{_2}\neg \varphi\land J_{_2}\psi) $; \label{BCA:18}
\item $\Jzero \varphi \thickapprox \Jzero \psi \;\&\; \Juno \varphi \thickapprox \Juno \psi  \;\&\; \Jdue \varphi \thickapprox \Jdue \psi \;\Rightarrow\; \varphi \thickapprox \psi$. \label{BCA:quasi} 
\end{enumerate}
\end{definition}

$\mathsf{BCA}$ forms a quasivariety which is not a variety \cite{FinnGrigolia80, FinnGrigolia}, and it is generated by $\WKt$, i.e. $\class{BCA}=ISP(\alg{\WKt})$. This is true in virtue of \cite[Theorem 3.2.2]{Cz01}, upon noticing that $\mathsf{BCA}$ algebraizes the logic $\B$, which is defined by the single matrix $\pair{\WKt, \{1\}}$. The fact that $\WKt$ generates $\mathsf{BCA}$ was firstly stated by Finn and Grigolia (\cite{FinnGrigolia80, FinnGrigolia}).  
Familiar examples of Bochvar algebras can be obtained by appropriately computing the external functions over a Boolean algebra, as indicates the following example.
\begin{example}\label{example BA BCA}
Let $\A$ be a (non-trivial) Boolean algebra. Setting the functions $\Ji\colon A\to A$, with $k\in \{0,1,2\}$ as $\Jdue = id$, $\Juno = 0$ (the constant function onto $0$) and $\Jzero = \neg $, then $\A = \pair{A,\wedge,\vee, \neg, 0,1, \Jdue, \Juno, \Jzero}$ is a Bochvar algebra.
\end{example}

For this reason, by $\mathbf{B}_{n}$ we will safely denote both the $n$-elements Boolean algebra and its $\mathsf{BCA}$ expansion obtained according to Example \ref{example BA BCA}. Since $\WKt$ generates $\mathsf{BCA}$, a quasi-equation holds in $\WKt$ if and only if it holds in every $\A\in\mathsf{BCA}$. 

The original equational basis for $\class{BCA}$, as provided in Definition \ref{def: algebre di Bochvar}, can be significantly enhanced by reducing the number of axioms and improving their intelligibility\footnote{We thank an anonymous referee for pointing this out.}. It is known that the operations $J_{_{0}},J_{_{1}}$ can be defined as $\Jdue\neg\varphi$ and $\neg(\Jdue\varphi\lor\Jdue\neg\varphi)$, respectively. Thus, Bochvar algebras can be equivalently presented in the restricted language $\langle\vee,\wedge, \neg, \Jdue, 0,1\rangle$, and this is particularly convenient for our next goal, namely to provide a new, simpler quasi-equational basis for $\class{BCA}$.  This is accomplished in the next theorem.
\begin{theorem}\label{th: algebre di Bochvar2}
The following is a quasi-equational basis for $\class{BCA}$. 
\begin{enumerate}
\item $\varphi\vee \varphi\thickapprox \varphi$; \label{BCA:1}
\item$\varphi\lor \psi \thickapprox \psi \lor \varphi$; \label{BCA:2}
\item $(\varphi\lor \psi)\lor \delta\thickapprox  \varphi\lor(\psi\lor \delta)$; \label{BCA:3}
\item $\varphi\land(\psi\lor \delta)\thickapprox(\varphi\land \psi)\lor(\varphi\land \delta)$; \label{BCA:4}
\item $\neg(\neg \varphi)\thickapprox \varphi$; \label{BCA:5}
\item $\neg 1\thickapprox 0$; \label{BCA:6}
\item $\neg( \varphi\lor \psi)\thickapprox \neg \varphi\land\neg \psi$; \label{BCA:7}
\item $0\vee \varphi\thickapprox \varphi$; \label{BCA:8}
\item $\Jzero\Jdue \varphi\thickapprox \neg\Jdue \varphi$; \label{BCA:10}
\item $\Jdue\varphi \thickapprox\neg(\Jzero\varphi\vee \Juno\varphi)$; \label{BCA:13}
\item $\Jdue\varphi\vee\neg \Jdue\varphi \thickapprox 1$; \label{BCA:14}
\item $J_{_2}(\varphi\lor \psi)\thickapprox ( J_{_2}\varphi\land J_{_2}\psi)\vee( J_{_2}\varphi\land J_{_2}\neg \psi)\vee ( J_{_2}\neg \varphi\land J_{_2}\psi) $; \label{BCA:18}
\item $\Jzero \varphi \thickapprox \Jzero \psi \;\&\; \Jdue \varphi \thickapprox \Jdue \psi \;\Rightarrow\; \varphi \thickapprox \psi$.\label{BCA:quasi}
\end{enumerate}
\end{theorem}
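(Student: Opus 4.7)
The statement asserts that the new 13-item list and the 19-item list of Definition~\ref{def: algebre di Bochvar} cut out the same quasivariety $\class{BCA}$, so I would split the argument into the two natural directions.

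The first direction --- that every $\class{BCA}$-algebra satisfies the new basis --- is essentially by inspection. Items (1)--(8) and (12) of the new list are literally (1)--(8) and (18) of the original; items (9), (10), (11) are the particular instances $k=2$, $(i,j,k)=(2,0,1)$, and $k=2$ of the original (10), (13), (14). The new quasi-identity (13) follows from original (13) and (19): applying original (13) with $i=1$, $j=0$, $k=2$ gives $\Juno\varphi\thickapprox\neg(\Jzero\varphi\vee\Jdue\varphi)$, so from $\Jzero\varphi\thickapprox\Jzero\psi$ and $\Jdue\varphi\thickapprox\Jdue\psi$ one also obtains $\Juno\varphi\thickapprox\Juno\psi$, and the original quasi-identity then concludes.

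The substantive direction is the converse. Fix an algebra $\A$ satisfying the new basis. My plan rests on two observations: first, by new (11), (4), and (7), every element of the form $\Jdue\varphi$ is complemented in the involutive-bisemilattice reduct, so the collection of $\Jdue$-values behaves classically with respect to $\vee,\wedge,\neg$; second, the new quasi-identity (13) allows me to reduce any target equation $\sigma\thickapprox\tau$ to the pair $\Jzero\sigma\thickapprox\Jzero\tau$ and $\Jdue\sigma\thickapprox\Jdue\tau$. The bootstrap step is to establish the \emph{definability identities} $\Jzero\varphi\thickapprox\Jdue\neg\varphi$ and $\Juno\varphi\thickapprox\neg(\Jdue\varphi\vee\Jdue\neg\varphi)$. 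Negating new (10) and using (5) yields $\neg\Jdue\varphi\thickapprox\Jzero\varphi\vee\Juno\varphi$; combining this with new (9) and the Boolean-like behavior of the $\Jdue$-fragment, and then appealing to the quasi-identity on appropriate term pairs, gives the two definability identities.

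With these in hand, every remaining axiom of Definition~\ref{def: algebre di Bochvar} reduces to a computation in the $\Jdue$-fragment. The full forms of (9), (10), (11), (14) come from their $k=2$ instances by substituting the defined forms for $\Jzero$ and $\Juno$; (12) is direct from the definability identities together with (5); the missing cases of (13) follow from new (10), (5), (7), and the definability identities; (15) and (16) are verified via the quasi-identity, reducing both sides to $\Jdue$-expressions controlled by complementedness and new (12); finally (17) follows by rewriting $\Jzero(\varphi\vee\psi)\thickapprox\Jdue(\neg\varphi\wedge\neg\psi)$ via (7), together with an auxiliary formula for $\Jdue$ on conjunctions derivable from new (12) by De Morgan. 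The main obstacle I anticipate is the bootstrap step itself: the new basis constrains $\Jzero$ and $\Juno$ only indirectly, through new (9), (10), and the quasi-identity, so extracting the definability identities demands a careful interplay of these ingredients. Once this is past, the remaining derivations are routine manipulations inside the involutive-bisemilattice reduct, guided throughout by the ``check on $\Jzero$ and $\Jdue$ values'' principle supplied by the quasi-identity (13).
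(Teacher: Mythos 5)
Your first direction is fine, and your reduction-to-$(\Jzero,\Jdue)$ strategy for the converse is in the spirit of the paper's appendix. The problem is the step you yourself flag as the crux: the ``bootstrap'' derivation of $\Jzero\varphi\thickapprox\Jdue\neg\varphi$ and $\Juno\varphi\thickapprox\neg(\Jdue\varphi\vee\Jdue\neg\varphi)$ from the new basis with $\Jzero,\Juno$ taken as \emph{primitive} operations. That derivation is not merely delicate; it is impossible, because the thirteen axioms do not determine $\Juno$ (nor, in general, $\Jzero$) off the image of $\Jdue$. Concretely, take the four-element Boolean algebra $\mathbf{B}_{4}=\{0,a,\neg a,1\}$ with $\Jdue=\mathrm{id}$ and $\Jzero=\neg$, but set $\Juno 0=a$ and $\Juno x=0$ for $x\neq 0$. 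Axioms (1)--(8), (11), (12) involve only $\vee,\wedge,\neg,\Jdue$ and hold because $\mathbf{B}_{4}$ is Boolean and $\Jdue$ is the identity; axiom (9) reads $\Jzero x\thickapprox\neg x$, which holds; axiom (10) at $x=0$ gives $\neg(1\vee a)=0=\Jdue 0$ and at $x\neq 0$ gives $\neg(\neg x\vee 0)=x$; and the quasi-identity (13) holds vacuously since $\Jdue$ is injective. Yet this algebra is not a Bochvar algebra: it violates $\Juno\Ji\varphi\thickapprox 0$ and item (13) of Definition \ref{def: algebre di Bochvar} with $i=1$, and it falsifies your target identity $\Juno\varphi\thickapprox\neg(\Jdue\varphi\vee\Jdue\neg\varphi)$ at $\varphi=0$. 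A similar perturbation of $\Jzero$ on an element outside the image of $\Jdue$ (for instance on $\bot$ in $\mathbf{B}_{4}\oplus\mathbf{B}_{2}$, where one may take $\Jzero\bot=\neg a$ and $\Juno\bot=a$) shows that $\Jzero$ is not pinned down either.

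The theorem is correct only under the reading set up in the paragraph preceding it: $\Jzero\varphi$ and $\Juno\varphi$ are \emph{abbreviations} for $\Jdue\neg\varphi$ and $\neg(\Jdue\varphi\vee\Jdue\neg\varphi)$, so the new basis axiomatizes the term-equivalent presentation of $\class{BCA}$ in the restricted language $\langle\vee,\wedge,\neg,\Jdue,0,1\rangle$. This is exactly how the appendix proceeds: equalities such as $\Jdue\Jzero a=\Jdue\Jdue\neg a$ are used ``by the definition'' of $\Jzero$, not derived. Once you adopt that reading your bootstrap step disappears (the definability identities are definitions), and the remainder of your plan --- recovering the full forms of items (9)--(18) of Definition \ref{def: algebre di Bochvar} by computations in the Boolean image of $\Jdue$ and invoking the quasi-identity --- matches the content of Lemmas \ref{lemma: aritmetica 1} and \ref{lemma: aritmetica 2}, although the actual derivations there (e.g.\ of $\Jdue(\varphi\wedge\psi)\thickapprox\Jdue\varphi\wedge\Jdue\psi$ and of item (16) of Definition \ref{def: algebre di Bochvar}) require a substantial amount of explicit computation that your sketch leaves out.
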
 The proof of the above Theorem \ref{th: algebre di Bochvar2} requires a significant amount of computations, which are included in the Appendix. Notice that, although  $J_{_{0}},J_{_{1}}$ are definable from the remaining operations of $\class{BCA}$, a detailed investigation of the semantic properties of the full language significantly improve the logical and algebraic understanding of $\class{BCA}$: this is why in several subsequent parts of the paper we will explicitly refer to the full stock of operations.

We now introduce the variety of involutive bisemilattices, which plays a key role to understand the structure theory of Bochvar algebras.
\begin{definition}\label{def: IBSL}
An \emph{involutive bisemilattice} is an algebra $\mathbf{B} = \pair{B,\land,\lor,\lnot,0,1}$ of type $(2,2,1,0,0)$ satisfying:
\begin{enumerate}[label=\textbf{I\arabic*}.]
\item $\varphi\lor \varphi\thickapprox \varphi$;
\item $\varphi\lor\psi\thickapprox \psi\lor \varphi$;
\item $\varphi\lor(\psi\lor \delta)\thickapprox(\varphi\lor \psi)\lor \delta$;
\item $\lnot\lnot \varphi\thickapprox \varphi$;
\item $\varphi\land \psi\thickapprox\lnot(\lnot \varphi\lor\lnot \psi)$;
\item $\varphi\land(\lnot \varphi\lor \psi)\thickapprox \varphi\land \psi$; \label{rmp}
\item $0\lor \varphi\thickapprox \varphi$;
\item $1\thickapprox\lnot 0$.
\end{enumerate}
\end{definition}

The class of involutive bisemilattices forms a variety, which we denote by $\IBSL$.  $\IBSL$ is  the so-called \emph{regularization} of the variety of Boolean algebras: this is the variety satisfying all and only the regular identities that hold in Boolean algebras, namely those identities where exactly the same variables occur in both sides of the equality symbol. The variety $\class{IBSL}$ is generated by the three element algebra $\WK$, i.e. $\class{IBSL}=HSP(\alg{WK})$ (see  \cite[Chapter 2]{Bonziobook}, \cite{Bonzio16},\cite{Plonka69}). It is not a direct consequence of Definition \ref{def: algebre di Bochvar} (nor of Theorem \ref{th: algebre di Bochvar2}) that the $\Ji$-free reduct of any Bochvar algebra is an involutive bisemilattice: indeed Definition \ref{def: algebre di Bochvar} implies that such reduct is a De Morgan bisemilattice (the regularization of de Morgan algebras). However, it is immediate to check that the identity \textbf{I6} in Definition \ref{def: IBSL} holds in any Bochvar algebra, as it does in $\WKt$.
Although being the variety generated by the matrix that defines $\mathsf{B}$, $\mathsf{IBSL}$ is not its algebraic counterpart, which rather consists of the proper quasivariety generated by $\WK$. This quasivariety is called \emph{single-fixpoint involutive bisemilattices}, $\mathsf{SIBSL}$ for short\footnote{This quasivariety is firstly investigated in \cite{Bonzio16}, and \cite{paoli2021extensions} contains information on its constant-free formulation.}, as its members are precisely the involutive bisemilattices with at most one fixpoint for negation, namely those containing at most one element $a$ such that $a=\neg a$. 

The examples of Bochvar algebras we have made so far ($\WKt$ and any Boolean algebra) consist of algebras having an $\mathsf{SIBSL}$-reduct; this is actually true for any Bochvar algebra. 
\begin{proposition}\label{prop: each BCA is SIBSL}
 Every Bochvar algebra has a $\mathsf{SIBSL}$-reduct.
 
\end{proposition}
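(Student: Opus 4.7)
The plan is to exploit the representation $\class{BCA}=ISP(\WKt)$ recorded after Definition \ref{def: algebre di Bochvar} in order to show that the $\Ji$-free reduct $\alg{A}^{-}$ of any $\alg{A}\in\class{BCA}$ is an involutive bisemilattice with at most one fixpoint for $\neg$.

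First I would verify the defining axioms \textbf{I1}--\textbf{I8} of $\class{IBSL}$ on $\alg{A}^{-}$. Axioms \textbf{I1}--\textbf{I4} and \textbf{I7} are items (1)--(3), (5) and (8) of Theorem \ref{th: algebre di Bochvar2}, respectively. Axiom \textbf{I5} follows from (5) and (7) via $\neg(\neg\varphi\vee\neg\psi)\thickapprox\neg\neg\varphi\wedge\neg\neg\psi\thickapprox\varphi\wedge\psi$, while \textbf{I8} reduces to $\neg 0\thickapprox\neg\neg 1\thickapprox 1$ by (5) and (6). The remaining identity \textbf{I6}, namely $\varphi\wedge(\neg\varphi\vee\psi)\thickapprox\varphi\wedge\psi$, does not follow by direct rewriting since $\varphi\wedge\neg\varphi$ need not equal $0$ (for instance $\ant\wedge\neg\ant=\ant$ in $\WKt$). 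However, a brief three-valued check confirms \textbf{I6} in $\WKt$, and because passing to the $\Ji$-free reduct commutes with $I$, $S$ and $P$ while $\class{BCA}=ISP(\WKt)$, the identity transfers to every Bochvar algebra.

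For the single-fixpoint property I would use the same embedding: fix $\iota\colon\alg{A}\hookrightarrow\WKt^{I}$ and suppose $a\in A$ satisfies $\neg a=a$. Then $\iota(a)(i)=\neg\iota(a)(i)$ for every $i\in I$, and since $\ant$ is the unique fixpoint of negation in $\WKt$, $\iota(a)$ must be the constant map with value $\ant$. Hence any two negation-fixpoints of $\alg{A}$ share the same $\iota$-image, and injectivity of $\iota$ forces them to coincide. The only mildly delicate point in the whole argument is the semantic detour through $\WKt$ required to establish \textbf{I6}; everything else is a routine consequence of the generation theorem $\class{BCA}=ISP(\WKt)$.
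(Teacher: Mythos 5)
Your proof is correct and rests on the same mechanism as the paper's: quasi-equations valid in $\WK$ transfer to the $\Ji$-free reduct of every Bochvar algebra because $\class{BCA}=ISP(\WKt)$ and $\WK$ is the $\Ji$-free reduct of $\WKt$. The paper applies this transfer wholesale to the entire quasi-equational theory of $\mathsf{SIBSL}$ in a three-line chain of equivalences, whereas you verify an explicit axiom list --- deriving most identities syntactically from Theorem \ref{th: algebre di Bochvar2} and reserving the semantic detour through $\WKt$ for \textbf{I6} and the single-fixpoint condition; this is more work than necessary, but it correctly isolates (as the paper itself remarks in the surrounding text) exactly which conditions do not follow by direct rewriting.
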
\label{prop: ridotto e IBSL}

\begin{proof}
It suffices to check that every valid $\mathsf{SIBSL}$-quasi-equation is also valid in $\mathsf{BCA}$.
To see this, let $\chi$ be a quasi-equation in the language of $\WK$. We have that
\begin{align*}
\mathsf{SIBSL}\vDash\chi&\iff \\
 \WK\vDash\chi&\iff \\
 \WKt\vDash\chi&\iff \\
 \mathsf{BCA}\vDash\chi.
\end{align*}
The first equivalence holds because $\WK$ generates $\mathsf{SIBSL}$, the second one because $\WK$ is the $\langle J_{_0},J_{_1},J_{_2}\rangle$-free reduct of $\WKt$ and the last one because $\WKt$ generates $\mathsf{BCA}$.
\end{proof}

Clearly, since $\mathsf{SIBSL}\subset\mathsf{IBSL}$, every Bochvar algebra has an $\mathsf{IBSL}$-reduct.
Although this fact, it is not the case that any (single-fixpoint) involutive bisemilattice can be turned into a Bochvar algebra: the reasons will be clear in the last part of the section.

The fact that the $\Ji$-free reduct of every Bochvar algebra is an involutive bisemilattice (Proposition \ref{prop: each BCA is SIBSL}) carries to the relevant observation that any such reduct  can be represented as a P\l onka sum of Boolean algebras \cite{Bonzio16}. P\l onka sums are general constructions introduced by the polish mathematician J. P\l onka \cite{Plo67,Plo67a, plonka1984sum} (more comprehensive expositions are \cite{Romanowska92}, \cite{romanowska2002modes}, \cite[Ch. 2]{Bonziobook} ) -- and now going under his name. In brief, the construction consists of ``summing up'' similar algebras, organized into a semilattice direct system and connected via homomorphisms, into a new algebra. In more details, the (semilattice direct) system is formed by a family of similar algebras $\{\A_{i}\}_{i\in I}$ with disjoint universes, such that the index set $I$ forms a lower-bounded semilattice $(I, \vee, i_{0})$ -- we denote by $\leq$ the induced partial order -- and, moreover, is made of a family $\{p_{ij}\}_{i\leq j}$ of homomorphisms $p_{ij}\colon \A_{i}\to \A_j$, defined from the algebra $\A_i$ to the algebra $\A_j$, whenever $i\leq j$, for $i,j \in I$. Such homomorphisms satisfy a further \emph{compatibility property}: $p_{ii}$ is the identity, for every $i\in I$ and $p_{jk}\circ p_{ij} = p_{ik}$, for every $i\leq j\leq k$. 

Given a semilattice direct system of algebras $\pair{\{\A_i\}_{i\in I}, (I, \vee, i_{0}), \{p_{ij}\}_{i\leq j}}$, the P\l onka sum over it is the new algebra $\A=\PL(\A_{i})_{i\in I}$ (of the same similarity type of the algebras $\{\A_i\}_{i\in I}$) whose universe is the union $A=\displaystyle\bigcup_{i\in I} A_i $ and whose generic n-ary operation $g$ is defined as: 
\begin{equation}\label{eq: operazioni}
g^{\A}(a_{1},\dots, a_n)\coloneqq g^{\A_k}(p_{i_{1}k}(a_1),\dots p_{i_{n}k}(a_n)),
\end{equation}
where $k= i_{1}\vee\dots\vee i_{n}$ and $a_1\in A_{i_1},\dots, a_{n}\in A_{i_n}$. If the similarity type contains any constant operation $e$, then $e^{\A} = e^{\A_{i_0}}$.  The algebras $\{\A_i\}_{i\in I}$ are called the \emph{fibers} of the P\l onka sum. A fiber $\alg{A}_{i}$  is \emph{trivial} if its universe is a singleton.   As already stated above, an element  $a$ is a \emph{fixpoint} when $a=\neg a$. Equivalently, using the P\l onka sum representation, a fixpoint can be understood as the universe of a trivial fiber.

With this terminology at hand, a remarkable result states that any member of the variety $\IBSL$ of involutive bisemilattices is isomorphic to the P\l onka sum over a semilattice direct system of Boolean algebras (see \cite{Bonzio16}, \cite{Bonziobook}). In a P\l onka sum of Boolean algebras  $\A=\PL(\A_{i})_{i\in I}$, a key role is played by  the absorption function $\varphi\land(\varphi\lor\psi)$, in the following sense. Given $a,b\in A$, it is possible to check that $a,b\in A_{i}$ for some $i\in I$ if and only if $a\land(a\lor b)=a$ and $b\land(b\lor a)=b$. Moreover, if $a\in A_{i}$ and $i\leq j$, it holds $p_{ij}(a)=a\land(a\lor b)$, for any $b\in A_{j}$. 
In the light of the above observations, given a Bochvar algebra $\A$ we will denote by $\PL(\A_{i})_{i\in I}$  the P\l onka decomposition of its $\mathsf{IBSL}$-reduct.

It shall be clear to the reader that a Bochvar algebra can not be represented as a P\l onka sum (of some class of algebras) in the usual sense (recalled above). The reason is that the operations $J_{_k} $, for any $k\in\{0,1,2\}$, are not computed according to condition \eqref{eq: operazioni}. Indeed, if $\PL(\A_{i})_{i\in I}$ is a P\l onka sum and $a\in A_{i}$, Axiom \eqref{BCA:14} entails that $J_{k}a\in A_{i_0}$ (with $i_0$ the least element of the index set $I$), while condition \eqref{eq: operazioni} requires that $J_{k}a\in A_{i}$.  Nonetheless, the fact that any Bochvar algebra has an $\mathsf{IBSL}$-reduct (Proposition \ref{prop: ridotto e IBSL}) suggests that P\l onka sums are good candidates to provide a representation theorem. Indeed we will rely on the P\l onka sum representation of the $\mathsf{IBSL}$-reduct of a Bochvar algebra to ``reconstruct'' the additional operations $J_{_k} $ and provide a (unique) P\l onka sum decomposition for any Bochvar algebra. 
We begin by studying the behavior of the maps $J_{_k} $ with respect to the $\mathsf{IBSL}$-reduct of a Bochvar algebra.



\begin{lemma}\label{lemma: J nell'algebra sotto}
Let $\A$ be a (non-trivial) Bochvar algebra and $\PL(\A_{i})_{i\in I}$  be the P\l onka decomposition of the $\mathsf{IBSL}$-reduct of $\A$, having $i_{0}$ as the least element in $I$. Then: 
\begin{enumerate}
\item $\Juno^{\A_{i_0}}$ is the constant map onto $0$; 
\item $\Jdue^{\A_{i_0}} = id$;
\item $\Jzero^{\A_{i_0}} a = \neg a$, for any $a\in A_{i_{0}}$.
\end{enumerate}
\end{lemma}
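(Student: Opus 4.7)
The plan is to leverage three ingredients: the Boolean-algebra structure of each fiber $\A_i$ (in particular $\A_{i_0}$, which contains $0$ and $1$) guaranteed by the P\l onka decomposition of the $\IBSL$-reduct; the axioms (1)--(18) of Definition~\ref{def: algebre di Bochvar} governing the $J_k$ maps; and the cancellation quasi-identity (19). The strategy is to reduce everything to item (2), since items (1) and (3) will then fall out from axioms (13) and (12) respectively.

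I would first establish two preliminary facts. The first is that, for every $a \in A$ and $k \in \{0,1,2\}$, the element $J_k a$ lies in $A_{i_0}$: axiom (14) yields $J_k a \vee \neg J_k a \thickapprox 1$, and since negation preserves fibers in the P\l onka decomposition, $J_k a$ must sit in the same fiber as $1$, namely $A_{i_0}$. The second is that $J_0 \varphi, J_1 \varphi, J_2 \varphi$ form a partition of unity inside the Boolean algebra $A_{i_0}$ -- that is, they are pairwise disjoint with join $1$. Both statements follow from reading axiom (13), $J_i\varphi \thickapprox \neg(J_j\varphi \vee J_k\varphi)$ with $\{i,j,k\}=\{0,1,2\}$, inside the Boolean algebra $A_{i_0}$.

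The crux -- and the step I expect to be the main obstacle -- is to prove $J_2 1 = 1$. No single equational axiom determines $J_2 1$, and direct applications of the cancellation quasi-identity on the pair $(1, J_2 1)$ lead to circular dependencies. The key trick is to instantiate axiom (18) at $(\varphi,\psi) = (x,\neg x)$; after simplification via axiom (5) and absorption inside the Boolean algebra $A_{i_0}$, it collapses to the useful identity $J_2(x \vee \neg x) \thickapprox J_2 x \vee J_2 \neg x$. Specialising at $x = J_2 1$, the left-hand side becomes $J_2 1$ (because $J_2 1 \vee \neg J_2 1 = 1$ in $A_{i_0}$ by axiom (14)), while axioms (9), (12) and (10) reduce the right-hand side to $J_2 1 \vee \neg J_2 1 = 1$; hence $J_2 1 = 1$.

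With $J_2 1 = 1$ in hand the remaining steps are mechanical. For (1), fix $a \in A_{i_0}$: the identity $J_2(x \vee \neg x) \thickapprox J_2 x \vee J_2 \neg x$ at $x = a$, together with axiom (12) and the Boolean equality $a \vee \neg a = 1$ inside $A_{i_0}$, yields $J_0 a \vee J_2 a = 1$, and axiom (13) then gives $J_1 a = 0$. For (2), I would apply the cancellation quasi-identity (19) to the pair $(a, J_2 a)$: the $J_2$-coordinate match is axiom (9), the $J_1$-coordinate match follows from (1) together with axiom (11), and the $J_0$-coordinate match reduces via axiom (12) to $J_2 \neg a = \neg J_2 a$, which holds because $J_2 a$ and $J_2 \neg a$ are complementary in the Boolean algebra $A_{i_0}$ (join $1$ from the identity above and meet $0$ from the partition-of-unity fact). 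Finally, (3) follows from axiom (12) together with (2) applied to $\neg a \in A_{i_0}$, giving $J_0 a = J_2 \neg a = \neg a$.
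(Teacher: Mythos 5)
Your proof is correct, but it takes a genuinely different route from the paper's. The paper's argument is a two-line semantic one: since $\class{BCA}=ISP(\WKt)$, any quasi-equation valid in $\WKt$ holds in every Bochvar algebra, and membership of $a$ in the lowest fiber $A_{i_0}$ is captured by the equation $\varphi\land 0\thickapprox 0$; each of the three claims then becomes a quasi-equation such as $\varphi\land 0\thickapprox 0\Rightarrow \Juno\varphi\thickapprox 0$, which is checked by inspection in the three-element algebra $\WKt$. You instead give a purely equational derivation from the axioms of Definition \ref{def: algebre di Bochvar}, and every step checks out: $J_{_k}a\in A_{i_0}$ via axiom \eqref{BCA:14} and the fact that unary operations and $\vee$ act fiber-wise on the $\IBSL$-reduct; the partition-of-unity reading of \eqref{BCA:13} inside the Boolean fiber $\A_{i_0}$; the derived identity $\Jdue(x\vee\neg x)\thickapprox\Jdue x\vee\Jdue\neg x$ from \eqref{BCA:18}, specialised at $x=\Jdue 1$ to get $\Jdue 1=1$; and the final appeal to the cancellation quasi-identity \eqref{BCA:quasi} on the pair $(a,\Jdue a)$. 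It is worth noting that your "crux" computation of $\Jdue 1=1$ and the identity $\Jdue(x\vee\neg x)\thickapprox\Jdue x\vee\Jdue\neg x$ are exactly items (1) and (4) of Lemmas \ref{lemma: aritmetica 2} and \ref{lemma: aritmetica 1} in the Appendix, so your argument is really in the style of the paper's Appendix rather than of Section \ref{sec: 3}. The trade-off: the paper's method is far shorter and is the template it reuses for almost every structural lemma in that section, but it presupposes the generation result $\class{BCA}=ISP(\WKt)$; yours is self-contained, works directly from the axiomatization, and makes visible \emph{why} the three identities hold, at the cost of length and of the auxiliary identities you must first derive.
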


\begin{proof}
It suffices to notice that (1) holds if and only if  \[\mathsf{BCA}\vDash \varphi\land 0\thickapprox 0\Rightarrow J_{_1}\varphi\thickapprox 0\]
 and this quasi-equation clearly holds in $\WKt$. The same applies to (2), (3) with respect to the quasi-equations $\varphi\land 0\thickapprox 0\Rightarrow J_{_2}\varphi\thickapprox \varphi$, $ \varphi\land 0\thickapprox 0\Rightarrow J_{_1}\varphi\thickapprox \neg \varphi$.
\end{proof}

Observe that, when one takes into account the whole algebra $\A$, $\Juno$ in general does not coincide with the constant function $0$, as in $\WKt$ it holds $J_{_1}\ant=1$.

The next result summarizes the key features of the operation $J_{_1}$.
\begin{lemma}\label{lemma: Juno}
The following hold for every $\A\in\mathsf{BCA}$ (with $\PL(\A_{i})_{i\in I}$ the P\l onka decomposition of its $\mathsf{IBSL}$-reduct).
\begin{enumerate}
 \item if $a,b\in A_{i}$, for some $i\in I$, then $J_{_1}a=J_{_1}b$;
 \item   if $a\in A_{i_{0}}$ then $J_{_1}a=0$;
 \item if $a=\neg a$ then $J_{_1}a=1$;
 \item $p_{i_{0}i}(J_{_1}a)=a\land\neg a$, for every $a\in A_{i}$ and $i\in I$. 
\end{enumerate} 
\end{lemma}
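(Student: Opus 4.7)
The plan rests on two facts already established in the paper: by axiom \eqref{BCA:14}, every value $\Ji a$ lands in the least fiber $A_{i_0}$, and the P\l onka-sum representation of the $\mathsf{IBSL}$-reduct has every fiber $A_i$ a Boolean algebra, with $\neg$ acting fiberwise.

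Item (2) is an immediate application of Lemma \ref{lemma: J nell'algebra sotto}(1). Item (3) can be obtained by pure axiomatic manipulation: if $a=\neg a$, then \eqref{BCA:12} at $k=0$ gives $\Jzero a=\Jzero(\neg a)=\Jdue a$, so \eqref{BCA:13} at $i=1$ yields $\Juno a=\neg(\Jzero a\vee\Jdue a)=\neg\Jzero a$; instantiating \eqref{BCA:13} at $i=0$ and using the excluded middle in the Boolean algebra $A_{i_0}$ then forces $\Jzero a=\neg(\neg\Jzero a\vee\Jzero a)=\neg 1=0$, whence $\Juno a=1$. Item (4) follows by applying \eqref{BCA:16} at $k=1$ to both $a$ and $\neg a$, using \eqref{BCA:12} to identify $\Juno\neg a=\Juno a$: the resulting equations $a\vee\Juno a=a$ and $\neg a\vee\Juno a=\neg a$, read inside the P\l onka sum, say that $p_{i_0 i}(\Juno a)$ is $\vee^{A_i}$-dominated by both $a$ and $\neg a$ in the Boolean fiber $A_i$, hence equals $a\wedge\neg a=0_{A_i}$.

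The main obstacle is item (1). The strategy is to exhibit a fiber-invariant whose $\Juno$ value coincides with $\Juno a$ for every $a\in A_i$; the natural candidate is $a\vee\neg a$, which equals the top $1_{A_i}$ of the Boolean fiber and therefore depends only on $i$. Using \eqref{BCA:17} together with \eqref{BCA:12} one sees that $\Jzero(a\vee\neg a)=\Jzero a\wedge\Jzero\neg a=\Jzero a\wedge\Jdue a$, which collapses to $0$ by \eqref{BCA:13} (since $\Jzero a,\Juno a,\Jdue a$ are pairwise disjoint and sum to $1$ in the Boolean algebra $A_{i_0}$). Using \eqref{BCA:18} and \eqref{BCA:12} one simplifies $\Jdue(a\vee\neg a)$ to $(\Jdue a\wedge\Jzero a)\vee\Jdue a\vee\Jzero a=\Jzero a\vee\Jdue a=\neg\Juno a$. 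Combining these via \eqref{BCA:13} one gets $\Juno(a\vee\neg a)=\neg(\Jzero(a\vee\neg a)\vee\Jdue(a\vee\neg a))=\neg(0\vee\neg\Juno a)=\Juno a$, and the same computation applied to $b\in A_i$ gives $\Juno(b\vee\neg b)=\Juno b$; since $a\vee\neg a=1_{A_i}=b\vee\neg b$, we conclude $\Juno a=\Juno b$.

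The delicate point here is spotting the invariant $a\vee\neg a$: a direct attempt via the quasi-equation \eqref{BCA:quasi} applied to $\Juno a,\Juno b$ is circular, because $\Jdue\Juno a=\Juno a$ by \eqref{BCA:9} reduces the hypothesis back to the thesis; and item (4) alone does not suffice, because $p_{i_0 i}$ need not be injective, so knowing that $p_{i_0 i}(\Juno a)=p_{i_0 i}(\Juno b)=0_{A_i}$ does not separate $\Juno a$ from $\Juno b$ inside $A_{i_0}$.
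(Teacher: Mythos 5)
Your proof is correct, but it takes a genuinely different route from the paper's. The paper proves each item semantically: it rewrites the claim as a quasi-equation in which membership in a common fiber and the projection $p_{i_{0}i}$ are expressed via the absorption term $\varphi\land(\varphi\lor\psi)$ (e.g.\ item (1) becomes $\varphi\land(\varphi\lor\psi)\thickapprox\varphi \;\&\; \psi\land(\psi\lor\varphi)\thickapprox\psi\Rightarrow J_{_1}\varphi\thickapprox J_{_1}\psi$, and item (4) uses $J_{_1}\varphi\land(J_{_1}\varphi\lor\varphi)\thickapprox\varphi\land\neg\varphi$), checks it by a three-element truth-table computation in $\WKt$, and transfers it to all of $\mathsf{BCA}$ via $\mathsf{BCA}=ISP(\WKt)$. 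You instead derive everything equationally from the axioms of Definition \ref{def: algebre di Bochvar} together with the P\l onka structure of the reduct; your computations for (2)--(4) are sound, and your key move for (1) --- passing to the fiber invariant $a\vee\neg a=1_{A_i}$ and showing $\Juno(a\vee\neg a)=\Juno a$ via \eqref{BCA:17}, \eqref{BCA:18}, \eqref{BCA:12} and \eqref{BCA:13} --- is a correct and genuinely new argument (your remark that \eqref{BCA:quasi} applied to $\Juno a,\Juno b$ is circular, and that item (4) cannot separate them because $p_{i_{0}i}$ is not injective, is also accurate). What each approach buys: the paper's method is uniform and very short, but leans on the generation theorem $\mathsf{BCA}=ISP(\WKt)$; yours is self-contained relative to the quasi-equational basis, hence would survive in any axiomatically presented expansion where generation by $\WKt$ is not yet available, and it isolates the structural reason behind (1), namely that $\Juno$ factors through the map $a\mapsto a\vee\neg a$ onto the top of the fiber --- at the cost of noticeably longer computations.
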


\begin{proof}
(1). For every $a,b\in A_{i}$, $a\land(a\lor b)=a$ and $b\land(b\lor a)=a$. Since 
\[
\WKt\vDash \varphi\land(\varphi\lor \psi)\thickapprox\varphi \ \& \ \psi\land(\psi\lor \varphi)\thickapprox\psi\Rightarrow J_{_1}\varphi\thickapprox J_{_1}\psi,
\]
it follows $J_{_1}a=J_{_1}b$.

(2) follows from  (1) in Lemma \ref{lemma: J nell'algebra sotto}. 

(3) holds because $\WKt\vDash \varphi\thickapprox\neg\varphi\Rightarrow J_{_1}\varphi\thickapprox 1$.

 (4). For $a\in A_{i}$, it holds $p_{i_{0}i}(J_{_1}a)=J_{_1}a\land(J_{_1}a\lor a)$ and observe that 
\[\WKt\vDash J_{_1}\varphi\land(J_{_1}\varphi\lor \varphi)=\varphi\land\neg\varphi,\]

so $p_{i_{0}i}(J_{_1}a)=J_{_1}a\land(J_{_1}a\lor a)=a\land\neg a$. 
\end{proof}

\begin{remark}\label{rem: sui J_1}
It follows from Lemma \ref{lemma: Juno} that, for every $a,b\in A_{i}$ (for some $i\in I$), i.e. $a,b$ are elements in the same fiber of the P\l onka sum, $\Juno a = \Juno b $, thus, in particular, $\Juno(a\vee b) = \Juno a \vee\Juno b = \Juno a \wedge\Juno b = \Juno(a\wedge b)$.
\end{remark}
As a notational convention, let us denote by $1_i$ and $0_i$ the top and the bottom elements, respectively, of a generic fiber $\A_i$ in a P\l onka sum of Boolean algebras.

\begin{lemma}\label{lemma: omomorfismi sono suriettivi}
Let $\A$ be a Bochvar algebra with $\PL(\A_{i})_{i\in I}$ the P\l onka decomposition of its $\mathsf{IBSL}$-reduct. Then 
\begin{enumerate}
\item $\PL(\A_{i})_{i\in I}$ has surjective homomorphisms;
\item  for every $i\neq i_{0}$, $p_{i_{0}i}$ is not injective;
\item for every $a\in A$ and every $i\in I$ (with $a\in A_i$), $\Jdue a\in p_{i_0 i}^{-1}(a)$ and $\Jzero a\in p_{i_0 i}^{-1}(\neg a)$;
\item for every $a,b\in A_i$, $\Jdue (a\vee b ) = \Jdue a\vee \Jdue b$.
\end{enumerate}
\end{lemma}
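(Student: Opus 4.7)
The plan is to establish the four items in the order (3), (1), (2), (4). For (3), I would first argue that $\Jdue a, \Jzero a \in A_{i_0}$ for every $a$: by axiom \eqref{BCA:14}, $\Ji a \vee \neg \Ji a \thickapprox 1$; since $1 \in A_{i_0}$, negation preserves fibers, and a join lies in the fiber indexed by the join of the two summands' indices, the element $\Ji a$ must live in $A_{i_0}$. Once this is known, the P\l onka formula $p_{i_0 i}(b) = b \wedge (b \vee c)$ (for $b \in A_{i_0}$, $c \in A_i$), recalled just before Lemma \ref{lemma: J nell'algebra sotto}, reduces the claims $p_{i_0 i}(\Jdue a) = a$ and $p_{i_0 i}(\Jzero a) = \neg a$ to the identities $\Jdue \varphi \wedge (\Jdue \varphi \vee \varphi) \thickapprox \varphi$ and $\Jzero \varphi \wedge (\Jzero \varphi \vee \neg \varphi) \thickapprox \neg \varphi$, both of which are easily checked in $\WKt$ and therefore hold in $\class{BCA}$. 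Part (1) is then immediate: (3) supplies $\Jdue a$ as a preimage of $a$ under $p_{i_0 i}$, and for a general $p_{ij}$ with $i \leq j$ the compatibility identity $p_{i_0 j} = p_{ij} \circ p_{i_0 i}$ transfers surjectivity of $p_{i_0 j}$ to $p_{ij}$.

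For (2), fix $i \neq i_0$ and any $x \in A_i$. By Lemma \ref{lemma: Juno}(4), $p_{i_0 i}(\Juno x) = x \wedge \neg x$, which is the Boolean zero of $A_i$ and therefore also equals $p_{i_0 i}(0)$; hence it suffices to find some $x \in A_i$ with $\Juno x \neq 0$. Suppose, for contradiction, that $\Juno x = 0$. Axiom \eqref{BCA:13} gives $\Jzero x = \neg(\Juno x \vee \Jdue x) = \neg \Jdue x$, which by axiom \eqref{BCA:10} equals $\Jzero \Jdue x$; axiom \eqref{BCA:9} gives $\Jdue x = \Jdue \Jdue x$; and since $\Jdue x \in A_{i_0}$, Lemma \ref{lemma: J nell'algebra sotto}(1) gives $\Juno \Jdue x = 0 = \Juno x$. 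Then the quasi-equation \eqref{BCA:quasi} forces $x = \Jdue x \in A_{i_0}$, contradicting $i \neq i_0$. This is the main obstacle of the proof: producing the candidate witness $\Juno x$ is straightforward, but ruling out the degenerate case $\Juno x = 0$ hinges on the observation that this hypothesis makes all three coordinates $\Jzero, \Juno, \Jdue$ agree on $x$ and on $\Jdue x$, so that the quasi-equation identifies $x$ with its $\Jdue$-image.

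For (4), apply axiom \eqref{BCA:18} to $a, b \in A_i$ and rewrite $\Jdue \neg$ as $\Jzero$ via axiom \eqref{BCA:12}. Setting $w = \Juno a = \Juno b$ (equality by Lemma \ref{lemma: Juno}(1)), $x = \Jdue a$, and $y = \Jdue b$, all three elements live in the Boolean algebra $A_{i_0}$. Axioms \eqref{BCA:13} and \eqref{BCA:14} make $\{\Jzero a, \Juno a, \Jdue a\}$ a pairwise disjoint partition of $1$ in $A_{i_0}$, so $x \leq \neg w$ and $\Jzero a = \neg w \wedge \neg x$, with the analogous identities for $b$. A routine Boolean calculation then collapses $(x \wedge y) \vee (x \wedge \neg w \wedge \neg y) \vee (\neg w \wedge \neg x \wedge y)$ to $x \vee y$, yielding $\Jdue(a \vee b) = \Jdue a \vee \Jdue b$.
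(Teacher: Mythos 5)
Your proposal is correct, and for items (1) and (3) it follows the paper's own route exactly: the absorption formula $p_{i_0i}(b)=b\wedge(b\vee c)$ together with the identities $\Jdue\varphi\wedge(\Jdue\varphi\vee\varphi)\thickapprox\varphi$ and $\Jzero\varphi\wedge(\Jzero\varphi\vee\varphi)\thickapprox\neg\varphi$ (valid in $\WKt$, hence in $\class{BCA}=ISP(\WKt)$), plus the compatibility $p_{i_0j}=p_{ij}\circ p_{i_0i}$ for general surjectivity. For (2) and (4) you diverge in execution while using the same underlying machinery. In (2) the paper assumes $p_{i_0j}$ injective, deduces $J_k0_j=J_k0$ for all $k$ and gets $0=0_j$ from the quasi-equation; you instead exhibit the concrete non-injectivity witness $(\Juno x,0)$, ruling out $\Juno x=0$ by showing that hypothesis makes $J_k x=J_k\Jdue x$ for all $k$, so the quasi-equation would collapse $x$ into $A_{i_0}$. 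Both hinge on the separating power of the quasi-equation; yours has the small bonus of identifying an explicit pair of elements that $p_{i_0i}$ conflates. In (4) the paper simply observes the claim is a quasi-equation valid in $\WKt$; you instead derive it syntactically from axiom \eqref{BCA:18} by a Boolean computation in the bottom fiber, using that $\{\Jzero a,\Juno a,\Jdue a\}$ partitions $1$ in $\A_{i_0}$ and that $\Juno$ is constant on fibers. Your version is longer but generator-free and makes the partition structure explicit; the paper's is a one-line appeal to $\class{BCA}=ISP(\WKt)$. No gaps.
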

\begin{proof}
Clearly $\WKt\vDash J_{_2}\varphi\land(J_{_2}\varphi\lor\varphi)\thickapprox\varphi$,  and $\WKt\vDash J_{_0}\varphi\land(J_{_0}\varphi\lor\varphi)\thickapprox\neg\varphi$, so, for $a\in A_{i}$, $p_{i_{0}i}(J_{_2}a)=J_{_2}a\land(J_{_2}a\lor a)=a$ and $p_{i_{0}i}(J_{_0}a)=\neg a$. This proves (3) and that $p_{i_{0}i}$ is surjective, for every $i\in I$. Let now $i\leq j$. Since $p_{i_{0}j}=p_{ij}\circ p_{i_{0}i}$ is surjective, also $p_{ij}$ is surjective. This shows that $\PL(\A_{i})_{i\in I}$ has surjective homomorphisms (1). (4) holds as it is equivalent to the quasi-equation
\[\varphi\land(\varphi\lor\psi)\thickapprox\varphi \ \& \ \psi\land(\psi\lor\varphi)\thickapprox\psi\Rightarrow J_{_2}(\varphi\lor\psi)\thickapprox J_{_2}\varphi\lor J_{_2}\psi,\]
which is true in $\WKt$. \\
\noindent
(2) Suppose, by contradiction, that there is $j\in I$ such that $j\neq i_{0}$ and $p_{i_{0} j}$ is an injective homomorphism. Thus, by Lemma \ref{lemma: Juno} and the fact that $J_{_2}(\varphi\land\neg\varphi)\thickapprox 0$  is true in $\WKt$, it holds $\Juno 0_j = 0$ and $\Jdue 0_j = 0$. Moreover, by \eqref{BCA:12}, $\Jzero 0_j = 1$, hence $J_{_i} 0 = J_{_i} 0_{j}$, for every $i\in\{0,1,2\}$. Therefore, by the quasi-equation \eqref{BCA:quasi}, $0 = 0_{j}$, a contradiction.
\end{proof}

\begin{remark}\label{rem: Jdue inversa a destra}
It follows from Lemma \ref{lemma: omomorfismi sono suriettivi} that, for any $i\in I$, $ p_{i_{0}i}\circ\Jdue^{{\A_{i}}} = id_{\A_i} $, namely that $\Jdue$ (restricted on $\A_i$) is the right inverse of the surjective homomorphism $p_{i_{0}i}$.

\end{remark}

\begin{remark}
Note that, in general, it does not hold that $\Jdue (\varphi\vee \psi)\thickapprox \Jdue \varphi \vee \Jdue \psi$ (the identity is falsified in $\WKt$). 
\end{remark}

Recall that, for any non-trivial Boolean algebra $\A = \pair{A,\wedge,\vee, \neg , 0,1 }$ and any $a\in A$, one can turn the interval $[0,a] = \{x\in A\;|\; x\leq a\}$ into a Boolean algebra $\mathbf{[0,a]} = \pair{[0,a], \wedge, \vee, ^{\ast}, 0, a} $, where $x^{\ast} = \neg x\wedge a$. We will refer to  such an algebra as an \emph{interval Boolean algebra}. In the following result we show that any Boolean algebra in the P\l onka sum representation of the $\mathsf{IBSL}$-reduct of a Bochvar algebra is isomorphic to a specific interval Boolean algebra in the lowest fiber.

\begin{proposition}\label{prop: immagine tramite J2}
Let $\A$ be a Bochvar algebra with $\PL(\A_{i})_{i\in I}$ the P\l onka decomposition of its $\mathsf{IBSL}$-reduct. Then, for every $i\in I$, $\Jdue\colon\A_i \to \mathbf{[0,\top]}$ is an isomorphism onto the interval Boolean algebra ${\bf[0,\top]} = \pair{[0,\top], \wedge,\vee, ^{\ast}, 0 , \top}$, where $\top = \Jdue 1_i$, for every $i\neq i_{0}$. 
\end{proposition}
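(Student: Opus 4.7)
The plan is to exhibit $p_{i_{0}i}$, restricted to the interval $[0,\top]$, as a two-sided inverse of $\Jdue|_{\A_{i}}$ and to verify that this map preserves the Boolean structure. The case $i=i_{0}$ is degenerate: then $1_{i_{0}}=1$, so $\top=\Jdue 1=1$, $[0,\top]=A_{i_{0}}$, and $\Jdue|_{\A_{i_{0}}}$ is the identity by Lemma~\ref{lemma: J nell'algebra sotto}(2). Henceforth assume $i\neq i_{0}$.

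First I would check that $\Jdue$ actually lands in $[0,\top]$. Lemma~\ref{lemma: omomorfismi sono suriettivi}(3) gives $\Jdue a\in A_{i_{0}}$ for every $a\in A_{i}$, and applying Lemma~\ref{lemma: omomorfismi sono suriettivi}(4) to $a\vee 1_{i}=1_{i}$ yields $\Jdue a\vee\top=\top$, i.e.\ $\Jdue a\leq\top$ in the Boolean algebra $\A_{i_{0}}$. Second I would verify preservation of the Boolean operations. Join is handled by Lemma~\ref{lemma: omomorfismi sono suriettivi}(4). For meet, the identity $\Jdue(\varphi\wedge\psi)\thickapprox\Jdue\varphi\wedge\Jdue\psi$ is straightforward to verify on the three values of $\WKt$, and hence is valid throughout $\class{BCA}$ because $\WKt$ generates the quasivariety. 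The top of the interval is preserved by definition of $\top$. For the bottom, I would combine meet-preservation with the P\l onka sum computation $0\wedge 0_{i}=p_{i_{0}i}(0)\wedge_{\A_{i}}0_{i}=0_{i}$ and with $\Jdue 0=0$ (Lemma~\ref{lemma: J nell'algebra sotto}(2)) to obtain
\[\Jdue 0_{i}=\Jdue(0\wedge 0_{i})=\Jdue 0\wedge\Jdue 0_{i}=0\wedge\Jdue 0_{i}=0.\]

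Third I would prove bijectivity by establishing that $p_{i_{0}i}|_{[0,\top]}$ is a two-sided inverse of $\Jdue|_{\A_{i}}$. That $p_{i_{0}i}(\Jdue a)=a$ for $a\in\A_{i}$ is exactly Lemma~\ref{lemma: omomorfismi sono suriettivi}(3). Conversely, for $c\in[0,\top]\subseteq A_{i_{0}}$ a direct P\l onka sum computation gives $p_{i_{0}i}(c)=c\wedge 1_{i}$, whence
\[\Jdue(p_{i_{0}i}(c))=\Jdue(c\wedge 1_{i})=\Jdue c\wedge\Jdue 1_{i}=c\wedge\top=c,\]
using $\Jdue c=c$ because $c\in A_{i_{0}}$. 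Once $\Jdue|_{\A_{i}}$ is known to be a bijective $(\vee,\wedge,0,\top)$-homomorphism between Boolean algebras, preservation of the relative complement $^{\ast}$ comes for free, so $\Jdue$ is a Boolean isomorphism onto $\mathbf{[0,\top]}$.

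The main obstacle I anticipate is the bookkeeping around the mixed-fiber P\l onka computations, specifically confirming $0\wedge 0_{i}=0_{i}$ and $p_{i_{0}i}(c)=c\wedge 1_{i}$ for $c\in A_{i_{0}}$: these are the bridges between the bottom fiber $\A_{i_{0}}$ and the fiber $\A_{i}$ along which $\Jdue$ and $p_{i_{0}i}$ are meant to be mutual inverses. The meet-preservation identity for $\Jdue$, although elementary, is the other delicate ingredient, since it is what forces the image of $\Jdue|_{\A_{i}}$ to sit exactly inside the interval $[0,\top]$ and to respect its Boolean structure rather than only a ragged subset of $\A_{i_{0}}$.
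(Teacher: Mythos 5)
Your proposal is correct and follows essentially the same route as the paper: meet-preservation of $\Jdue$ via the identity checked in $\WKt$, join-preservation via Lemma \ref{lemma: omomorfismi sono suriettivi}(4), injectivity from $p_{i_{0}i}$ being a left inverse, and surjectivity via the computation $a=\Jdue(p_{i_{0}i}(a))$ for $a\leq\top$. The only cosmetic difference is that you obtain preservation of the relative complement as an automatic consequence of being a bijective bounded-lattice homomorphism between Boolean algebras, whereas the paper verifies it by a direct calculation with $\Jzero$ and $\Juno$; both are fine.
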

\begin{proof}
$\WKt\models \Jdue(\varphi\land\psi)\thickapprox\Jdue\varphi\land\Jdue\psi$, thus $\Jdue$ preserves the $\land$ operation. By Lemma \ref{lemma: omomorfismi sono suriettivi}-(4) it also preserves $\lor$ when the arguments belong to the same fiber. This implies that $\Jdue(\A_i)$ is a lattice. To see that is bounded, recall that $\Jdue 0_i = 0$ (with $0_i$ the bottom element of $\A_i$); moreover, let $b\in \Jdue(A_{i})$, thus $b = \Jdue a$, for some $a\in A_i$, then $\Jdue a\vee\Jdue 1_i = \Jdue (a\vee 1_i) = \Jdue 1_i$, i.e. $\Jdue a\leq\Jdue 1_i$ thus $\top= \Jdue 1_i$ is the top element of the lattice $\Jdue(\A_i)$. Finally, observe that, for any $a\in A_i$, $\Jdue \neg a = \Jzero a = \neg\Jdue a\wedge\neg\Juno a = \neg\Jdue a\wedge (\Jdue a \vee\Jzero a) = \neg\Jdue a \wedge (\Jdue a\vee \Jdue\neg a) = \neg\Jdue a \wedge (\Jdue (a\vee \neg a)) =\neg\Jdue a \wedge \Jdue 1_i = (\Jdue a)^{\ast} $, i.e. $\Jdue$ preserves also the complementation of the interval algebra $\mathbf [0,\top]$. We have so shown that $\Jdue $ is a boolean homomorphism. Moreover, $\Jdue$ is injective as $p_{i_{0}i}$ is its left-inverse (see Remark \ref{rem: Jdue inversa a destra}). 
To see that $\Jdue$ is also surjective (onto $[0, \top]$), let $a \in [0, \top]$, i.e. $a\leq \Jdue 1_i$. By Lemma \ref{lemma: J nell'algebra sotto}, $\Jdue a = a\leq \Jdue 1_i$ hence $a = \Jdue a\wedge\Jdue 1_i = \Jdue (a\wedge 1_i) = \Jdue (p_{i_{0}i}(a)\wedge 1_i) = \Jdue (p_{i_{0}i}(a))$. This concludes the proof, because $p_{i_{0}i}(a)\in A_{i}$.
\end{proof}

\begin{remark}\label{rem: J2 e quoziente di A0}
Since $p_{i_{0}i}$ is a surjective (but not injective) homomorphism, for any $i\in I$ (and $i\neq i_0$) 
, then $\A_{i_{0}}/Ker(p_{i_{0}i})\cong\A_i$ (with $Ker(p_{i_{0}i}\neq \Delta^{\A_{i_0}}$ for $i\neq i_{0}$), via the isomorphism $f$ mapping $[x]_{Ker(p_{i_{0}i})}\mapsto p_{i_{0}i}(x)$. The proof of Proposition \ref{prop: immagine tramite J2} shows that $\Jdue$ is in fact the inverse of $f$. 
\end{remark}

Combining Proposition \ref{prop: immagine tramite J2} and Remark \ref{rem: J2 e quoziente di A0} we get that, for any $i\in I$ $\mathbf{[0,\top]}\cong \A_{i_{0}}/Ker(p_{i_{0}i})$. In the following we use the interval characterization proved in Proposition \ref{prop: immagine tramite J2} to establish some properties that will be used in the subsequent sections.

\begin{lemma}\label{lemma: J2 di 1}
Let $\A$ be a Bochvar algebra with $\PL(\A_{i})_{i\in I}$ the P\l onka decomposition of its $\mathsf{IBSL}$-reduct. Then: 
\begin{enumerate}
\item If $i < j$, then $\Jdue 1_j < \Jdue 1_i$. In particular, $\Jdue(\A_j) = [0,\Jdue 1_j]\subset [0,\Jdue 1_i] = \Jdue(\A_i)$; 
\item $\Jdue(p_{ij}(a))\leq \Jdue a$, for every $i\leq j$ and $a\in A_{i}$.
\end{enumerate}
\end{lemma}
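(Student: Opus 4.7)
The plan is to prove (2) first and then bootstrap (1) from it. For (2), the key identity is $\Jdue(\varphi \wedge \psi) \thickapprox \Jdue \varphi \wedge \Jdue \psi$, which is routine to verify directly in $\WKt$: both sides collapse to $0$ as soon as either argument takes the value $\ant$, and otherwise they coincide on the classical Boolean values. Given $i \leq j$ and $a \in A_{i}$, the characterization of P\l onka maps recalled in Section \ref{sec: 3} gives $p_{ij}(a) = a \wedge (a \vee 1_j)$ (with $1_j \in A_j$ as the witness), so applying the identity yields
\[
\Jdue(p_{ij}(a)) \;=\; \Jdue a \wedge \Jdue(a \vee 1_j) \;\leq\; \Jdue a
\]
in the Boolean algebra $\A_{i_0}$, which establishes (2).

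The weak inequality in (1) now follows by specializing (2) to $a = 1_i$: since $p_{ij}$ is a Boolean homomorphism between fibers, $p_{ij}(1_i) = 1_j$, so $\Jdue 1_j \leq \Jdue 1_i$.

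The main obstacle is the strict part of (1), which I would handle by contradiction. Assume $\Jdue 1_i = \Jdue 1_j$ with $i < j$. Two further identities of $\WKt$ (and hence of $\class{BCA}$) are needed, each checked directly on the three values $0, 1, \ant$: namely $\Jzero \varphi \thickapprox \Jdue(\neg \varphi)$ and $\Juno \varphi \thickapprox \neg(\Jdue \varphi \vee \Jzero \varphi)$. Combined with Proposition \ref{prop: immagine tramite J2}, which gives $\Jdue 0_k = 0$ for every $k \in I$ (bottoms map to bottoms under the Boolean isomorphism $\Jdue \colon \A_k \to [0, \Jdue 1_k]$), one computes
\[
\Jzero 1_i \;=\; \Jdue 0_i \;=\; 0 \;=\; \Jdue 0_j \;=\; \Jzero 1_j,
\]
and consequently $\Juno 1_i = \neg(\Jdue 1_i \vee \Jzero 1_i) = \neg(\Jdue 1_j \vee \Jzero 1_j) = \Juno 1_j$. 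The quasi-equation \eqref{BCA:quasi} then forces $1_i = 1_j$, contradicting the disjointness of fibers in the P\l onka decomposition (as $i < j$ entails $A_i \cap A_j = \emptyset$). Hence $\Jdue 1_j < \Jdue 1_i$, and the ``in particular'' clause is immediate from the interval description $\Jdue(\A_k) = [0, \Jdue 1_k]$ supplied by Proposition \ref{prop: immagine tramite J2}.
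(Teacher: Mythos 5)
Your proof is correct, but it reaches the conclusion by a somewhat different route than the paper, particularly for the strictness in item (1). For item (2) the two arguments are essentially the same: the paper verifies $\Jdue(\varphi\land(\varphi\lor\psi))\leq \Jdue\varphi$ directly in $\WKt$, while you factor it through $\Jdue(\varphi\land\psi)\thickapprox\Jdue\varphi\land\Jdue\psi$ and the absorption description $p_{ij}(a)=a\land(a\lor 1_j)$; both reduce to a $\WKt$-check. For item (1), however, the paper does not derive the weak inequality from item (2): it verifies two bespoke quasi-identities in $\WKt$, namely $\varphi\lor\neg\varphi\leq\psi\lor\neg\psi\Rightarrow \Jdue(\psi\lor\neg\psi)\leq \Jdue(\varphi\lor\neg\varphi)$ for the inequality $\Jdue 1_j\leq\Jdue 1_i$ and $\Jdue(\varphi\lor\neg\varphi)\thickapprox \Jdue(\psi\lor\neg\psi)\Rightarrow\varphi\lor\neg\varphi\thickapprox\psi\lor\neg\psi$ for strictness. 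You instead obtain $\Jdue 1_j\leq\Jdue 1_i$ by specializing (2) to $a=1_i$, and you get strictness from the separating quasi-equation of Definition \ref{def: algebre di Bochvar}: assuming $\Jdue 1_i=\Jdue 1_j$ you compute $\Jzero 1_i=\Jdue 0_i=0=\Jzero 1_j$ and hence $\Juno 1_i=\Juno 1_j$, forcing $1_i=1_j$ against disjointness of the fibers. This is a sound and arguably more self-contained derivation: it traces the injectivity of $\Jdue$ on the tops back to the defining quasi-equation of $\class{BCA}$ rather than to an additional quasi-identity checked on $\{0,1,\ant\}$, at the cost of invoking the auxiliary identities $\Jzero\varphi\thickapprox\Jdue\neg\varphi$, $\Juno\varphi\thickapprox\neg(\Jdue\varphi\lor\Jzero\varphi)$ and $\Jdue 0_k=0$, all of which are indeed available (the last one from the proof of Lemma \ref{lemma: omomorfismi sono suriettivi}). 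The paper's version is shorter but leaves more of the verification implicit in the generating algebra.
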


\begin{proof}
(1). If $1_{i}<1_{j}$ then $1_{i}\land1_{j}=1_{j}\neq 1_{i}$. The following two quasi-equations hold in $\WKt$
\begin{align*}
  \varphi\lor\neg\varphi\leq\psi\lor\neg\psi\Rightarrow J_{_2}(\psi\lor\neg\psi)\leq J_{_2}(\varphi\lor\neg\varphi), \\
  J_{_2}(\varphi\lor\neg\varphi)\thickapprox J_{_2}(\psi\lor\neg\psi)\Rightarrow \varphi\lor\neg\varphi\thickapprox\psi\lor\neg\psi.
\end{align*}
From the former we have that $J_{_2}1_{j}\leq J_{_2}1_{i}$ and, since $1_i\neq 1_j$, from the latter we conclude $J_{_2}1_{j}<J_{_2}1_{i}$. This entails $\Jdue(\A_j) = [0,\Jdue 1_j]\subset [0,\Jdue 1_i] = \Jdue(\A_i)$. \\
\noindent
(2) is equivalent to the equation $J_{_2}(\varphi\land(\varphi\lor\psi))\leq J_{_2}\varphi$, which is true in $\WKt$.
\end{proof}

The following result provides necessary conditions for an $\mathsf{SIBSL}$ to be the reduct of a Bochvar algebra.


\begin{theorem}[P\l onka sum decomposition]\label{th: Plonka sum decomposition}
Let $\A$ be a Bochvar algebra with $\PL(\A_i)_{i\in I}$ the P\l onka sum representation of its $\mathsf{IBSL}$-reduct. Then:
\begin{enumerate}
 \item  all the homomorphisms $\{p_{ij}\}_{i\leq j}$ are surjective and $p_{i_{0}i}$ is not injective for every $i\neq i_{0}$;
 \item for every $i\in I$, there exists an element $a_{i}\in A_{i_{0}}$ such that the restriction $p_{i_{0}i} $ on $\mathbf{[0,a_{i}]}$ is an isomorphism (with inverse $\Jdue$) onto $\A_{i}$, with $a_{i}\neq a_{j}$ for every $i\neq j$; in particular, if $i<j$ then $a_{j}<a_{i}$.
 
\end{enumerate}
Moreover, the decomposition is unique up to isomorphism.
\end{theorem}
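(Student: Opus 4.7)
My plan is to observe that item~(1) is essentially already proved, to realize item~(2) concretely with the choice $a_i:=\Jdue 1_i$, and to reduce uniqueness to the known uniqueness of the P\l onka decomposition of the $\mathsf{IBSL}$-reduct. For~(1), the surjectivity of all homomorphisms and the non-injectivity of $p_{i_{0}i}$ (for $i\neq i_0$) are exactly Lemma~\ref{lemma: omomorfismi sono suriettivi}(1)--(2); no additional argument is required.

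For~(2), set $a_i:=\Jdue 1_i$. First I would check that $a_i\in A_{i_0}$: axiom~\eqref{BCA:14} yields $\Jdue 1_i\lor \neg\Jdue 1_i \approx 1$, and since $1=1_{i_0}$ lives in the least fiber and negation preserves fibers in a P\l onka sum, an element whose join with its negation equals $1_{i_0}$ must itself belong to $\A_{i_{0}}$. Proposition~\ref{prop: immagine tramite J2} already identifies $\Jdue\colon\A_i\to \mathbf{[0,a_i]}$ as a Boolean isomorphism; combining Remark~\ref{rem: Jdue inversa a destra} ($p_{i_{0}i}\circ \Jdue=\mathrm{id}_{\A_i}$) with the final computation in the proof of Proposition~\ref{prop: immagine tramite J2} (which shows $a=\Jdue(p_{i_{0}i}(a))$ for every $a\in[0,a_i]$), one obtains that the restriction of $p_{i_{0}i}$ to $\mathbf{[0,a_i]}$ is precisely the two-sided inverse of $\Jdue$.

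It remains to argue that the assignment $i\mapsto a_i$ is injective and order-reversing. If $i<j$ then Lemma~\ref{lemma: J2 di 1}(1) yields $a_j<a_i$, settling both the distinctness and the order clause for comparable indices. For incomparable $i,j$, I would invoke the quasi-identity $\Jdue(\varphi\lor\neg\varphi)\thickapprox\Jdue(\psi\lor\neg\psi)\Rightarrow\varphi\lor\neg\varphi\thickapprox\psi\lor\neg\psi$ (valid in $\WKt$, as used in the proof of Lemma~\ref{lemma: J2 di 1}); applied to $\varphi=1_i$ and $\psi=1_j$, together with $1_k\lor\neg 1_k=1_k$ inside the Boolean fiber $\A_k$, this forces $1_i=1_j$, contradicting disjointness of the P\l onka fibers.

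Finally, uniqueness up to isomorphism follows from the uniqueness of the P\l onka decomposition of the $\mathsf{IBSL}$-reduct: the fibers are intrinsically recovered via the absorption predicate ($a,b$ lie in the same fiber iff $a\land(a\lor b)=a$ and $b\land(b\lor a)=b$), the induced semilattice on the index set and the connecting homomorphisms are then determined, and the $\Ji$ operations are part of the algebra, so any isomorphism of Bochvar algebras transfers them. The step I expect to be most delicate is the incomparable-indices case for the distinctness of the $a_i$'s, since Lemma~\ref{lemma: J2 di 1}(1) handles only comparable pairs and one has to appeal to the separate quasi-identity about $\Jdue(\varphi\lor\neg\varphi)$ to conclude.
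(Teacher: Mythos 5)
Your treatment of items (1) and (2) matches the paper's proof essentially step for step: item (1) is indeed just Lemma \ref{lemma: omomorfismi sono suriettivi}, the choice $a_i=\Jdue 1_i$ together with Proposition \ref{prop: immagine tramite J2} and Remark \ref{rem: Jdue inversa a destra} gives the interval isomorphism, and the distinctness of the $a_i$ via the quasi-identity $\Jdue(\varphi\lor\neg\varphi)\thickapprox\Jdue(\psi\lor\neg\psi)\Rightarrow\varphi\lor\neg\varphi\thickapprox\psi\lor\neg\psi$ applied to $1_i,1_j$ is exactly the argument in the paper (the paper applies it uniformly to all $i\neq j$, not only incomparable pairs, but that is cosmetic). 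Your check that $\Jdue 1_i\in A_{i_0}$ via axiom \eqref{BCA:14} and fiber-preservation of the internal operations is also the paper's observation.

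The uniqueness clause is where your argument falls short of what the theorem (as the paper proves it) asserts. You read ``unique up to isomorphism'' as: the decomposition data are intrinsically recoverable from $\A$, and an isomorphism \emph{of Bochvar algebras} transports the $\Ji$'s --- which is essentially tautological. The content the paper actually establishes is a genuine rigidity statement: if two Bochvar algebras $\A$ and $\mathbf{B}$ have isomorphic $\mathsf{IBSL}$-reducts via some $\Phi$, then $\Phi$ automatically preserves $\Jdue$ (hence $\Jzero,\Juno$), so $\A\cong\mathbf{B}$ as Bochvar algebras; in other words, the $\Ji$ operations are forced, up to isomorphism, by the reduct. This is not immediate --- one must check that $\Phi$ carries the transversal $[0,a_i]=[0,\Jdue^{\A}1_i]$ onto the corresponding transversal $[0,\Jdue^{\mathbf{B}}1_{\varphi(i)}]$, which the paper gets from the commutativity of $\Phi$ with the projections $p_{i_0i}$, $q_{i_0\varphi(i)}$ and the identification of $\Jdue$ with the inverse of the induced isomorphism $\A_{i_0}/\Ker(p_{i_0i})\cong\A_i$ (Remark \ref{rem: J2 e quoziente di A0}); equivalently, from the fact that $1/\Ker(p_{i_0i})$ is principal with generator $a_i$, and generators of principal filters are preserved by $\Phi$. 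This stronger uniqueness is what licenses later statements such as ``the unique way to turn $\mathbf{B}_4\oplus\mathbf{B}_2$ into a Bochvar algebra,'' so your proposal as written leaves that part unproved.
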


\begin{proof}
Let $\A\in\mathsf{BCA}$. By Proposition \ref{prop: ridotto e IBSL}, the $\Ji$-free reduct (with $k\in\{0,1,2\}$) of $\A$ is a single-fixpoint involutive bisemilattice, thus it is isomorphic to a P\l onka sum over a semilattice direct systems $\pair{\{\A_i\}_{i\in I}, (I,\leq), p_{ij}}$ of Boolean algebras (\cite{Bonzio16, Bonziobook}), whose homomorphisms are surjective and $p_{i_{0}i}$ is not injective (by Lemma \ref{lemma: omomorfismi sono suriettivi}). 
Moreover, for every $i\in I$, $\Jdue 1_{i}$ is the element in $\A_{i_{0}}$ such that $\A_{i}\cong\mathbf{[0, \Jdue 1_{i}]}$, by Proposition \ref{prop: immagine tramite J2} (which also ensures the isomorphisms are given by $\Jdue$ and $p_{i_{0}i}$). Lemma \ref{lemma: J2 di 1} ensures that, if $i<j$ then $J_{_2}1_{j}<J_{_2}1_{i}$. Finally, notice that $J_{_2}(\varphi\lor\neg\varphi)\thickapprox J_{_2}(\psi\lor\neg\psi)\Rightarrow \varphi\lor\neg\varphi\thickapprox \psi\lor\neg\psi$ holds in $\WKt$. Therefore, since $i\neq j$ entails $1_{i}\neq 1_{j}$,  we conclude $a_{i}=J_{_2}1_{i}\neq J_{_2}1_{j}=a_{j}$.

We now show that the decomposition is unique up to isomorphism, namely that different choices of the element $\Jdue a\in p_{i_{0}i}^{-1}(a)$ (for any $a\in A_{i}$) on isomorphic $\mathsf{IBSL}$-reducts lead to isomorphic Bochvar algebras. Observe that the decomposition of the $\mathsf{IBSL}$-reduct of $\A$ is unique up to isomorphism (\cite{Bonzio16, Bonziobook}). So, suppose that $\A$ and $\mathbf{B}$ are Bochvar algebras whose $\mathsf{IBSL}$-reducts $\A'$, $\mathbf{B}'$ are isomorphic via an isomorphism $\Phi\colon \A'\to\mathbf{B}'$. We claim that $\A\cong\mathbf{B}$ via $\Phi$. To this end, we want to show that $\Phi$ preserves also the operations $J_{_{k}}$, for any $k\in\{0,1,2\}$. Recall from the theory of P\l onka sums that $\Phi$ preserves the fibers (details can be found in \cite{Loi, SB18}) in the following sense: for any fiber $\A_i$ in the P\l onka sum decomposition of $\A'$, $\Phi(\A_i) \cong \mathbf{B}_{\varphi(i)}$, where $\varphi$ is the isomorphism induced by $\Phi$ on the semilattice of indexes and $\mathbf{B}_{\varphi(i)}$ is a fiber in the P\l onka sum decomposition of $\mathbf{B}' $. 
In particular, the following diagram commutes, for any $i\in I$ (with a slight abuse of notation we indicate $\varphi(i_0)$ with $i_0$ as it is still a lower bound in $\varphi(I)$).
\begin{center}

\begin{tikzpicture}
\draw (-4,0) node {$ \A_{i_0} $};
	\draw (4,0) node {$ \mathbf{B}_{i_0} $};
	
	\draw [line width=0.8pt, ->] (-3.6,0) -- (3.6,0);
	\draw (0,-0.4) node {\begin{footnotesize}$\Phi_{\restriction{\A_{i_{0}}}}$\end{footnotesize}};
	
	\draw [line width=0.8pt, <-] (3.3,3) -- (-3.3,3);
	\draw (0, 3.3) node {\begin{footnotesize}$\Phi_{\restriction{\A_{i}}}$\end{footnotesize}};

	\draw (-4,3) node {$\A_i$}; 
	
	\draw (4,3) node {$\mathbf{B}_{\varphi(i)}$};
		
	\draw [line width=0.8pt, <-] (-4,2.6) -- (-4,0.4);
	\draw (-4.6,1.7) node {\begin{footnotesize}$p_{i_{0}i}$\end{footnotesize}};
	\draw (4.6,1.7) node {\begin{footnotesize}$q_{i_{0}\varphi(i)}$\end{footnotesize}};
	\draw [line width=0.8pt, ->] (3.9,0.4) -- (3.9,2.6);
	
\end{tikzpicture}
\end{center}
\noindent
We claim that $\A\cong\mathbf{B}$ via $\Phi$. Let $a\in A$; in particular, $a\in A_i$, for some $i\in I$. By Lemma \ref{lemma: omomorfismi sono suriettivi}, $\Jdue^{\A} a \in p_{i_{0}i}^{-1}(a)$ and $\Jdue^{\mathbf{B}}\Phi(a)\in q_{i_{0}\varphi(i)}^{-1}(\Phi(a))$, hence, by the commutativity of the above diagram, the fact that $\Jdue$ is the isomorphism between $\A_{i}$ and $ \A_{i_{0}}/Ker(p_{i_{0}i}) $ (Remark \ref{rem: J2 e quoziente di A0}) and that $\Phi_{\restriction{\A_{i}}}$ and $\Phi_{\restriction{\A_{i_{0}}}}$ are isomorphisms) follows that $\Phi(\Jdue^{\A} a) = \Jdue^{\mathbf{B}}\Phi (a)$.
Therefore, we have that $\Phi$ is a homomorphism with respect to $\Jdue$ (and hence with respect of $\Juno$ and $\Jzero$, which can be defined in term of $\Jdue$) and this shows that $\A\cong\mathbf{B}$, namely the P\l onka sum decomposition is unique up to isomorphism. 
\end{proof}

\begin{remark}
 Observe that, as a consequence of Theorem \ref{th: Plonka sum decomposition}, the P\l onka sum decomposition of a Bochvar algebra $\A$ admits no injective homomorphism (excluding the identical homomorphisms $p_{ii}$). To see this, suppose $p_{ij}$ is injective, for some $i<j$. Then $p_{ij}$ is an isomorphism, as it is also a surjective map. Observe that $a_{j}=J_{_2}1_{j}<a_{i}=J_{_2}1_{i}$ and $p_{i_{0}i}\colon\mathbf{[0,a_{i}]}\to\A_{i}$ is also an isomorphism. Therefore, $p_{i_{0}i}(a_{j})\neq 1_{i}=p_{i_{0}i}(a_{i})$ and, by the injectivity of $p_{ij}$, $p_{ij}\circ p_{i_{0}i}(a_{j})=p_{i_{0}j}(a_{j})\neq 1_{j}$, a contradiction.
\end{remark}


We now show that the conditions displayed in the above theorem are also sufficient to equip any $\mathsf{SIBSL}$ with a $\mathsf{BCA}$-structure.

\begin{theorem}\label{thm: converse decomposition}
Let $\A = \pair{A,\wedge,\vee,\neg, 0,1}$ be an involutive bisemilattice whose P\l onka sum representation is such that

\begin{enumerate}
 \item all homomorphisms are surjective and $p_{i_{0}i}$ is not injective for every $i_{0}\neq i\in I$;
 \item for each $i\in I$ there exists an element $a_{i}\in A_{i_{0}}$ such that $p_{i_{0} i}\colon \mathbf{[0},\mathbf{a}_{i}]\to \A_i$ is an isomorphism, with $a_{i}\neq a_{j}$ for $i\neq j$ and, in particular, $a_{j}<a_{i}$ for each $i<j$.
\end{enumerate}
Define, for every $a\in A_{i}$ and $i\in I$:
\begin{itemize}
\item $J_{_2}(a)=p_{i_{0}i}^{-1}(a)\in[0,a_{i}]$; 
\item $\Jzero a\coloneqq \Jdue (\neg a)$; 
\item $\Juno a\coloneqq \neg(\Jdue a\vee\Jdue(\neg a))$.
\end{itemize}
Then $\mathbf{B} = \pair{A,\wedge,\vee,\neg, 0,1, \Jdue, \Juno, \Jzero}$ is a Bochvar algebra. 
\end{theorem}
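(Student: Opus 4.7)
The approach is to verify the reduced quasi-equational basis from Theorem \ref{th: algebre di Bochvar2}. Since $\A$ is assumed to be an involutive bisemilattice, the IBSL axioms (1)--(8) come for free, so the task reduces to establishing axioms (BCA:10), (BCA:13), (BCA:14), (BCA:18), together with the quasi-equation (BCA:quasi), for the freshly defined operations $\Jdue, \Juno, \Jzero$.

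I would first collect a few auxiliary facts. For every $a \in A_i$ the definition places $\Jdue a$ inside $[0, a_i] \subseteq A_{i_0}$, so $\Jdue$ always lands in the bottom fibre; since $p_{i_0 i_0}$ is the identity, $\Jdue$ restricted to $A_{i_0}$ is the identity map (hence also $\Jdue\Jdue a = \Jdue a$). Exploiting next that $p_{i_0 i}$ is a Boolean isomorphism from $[0, a_i]$ onto $\A_i$, both $\Jdue(\neg a)$ and the interval-complement $a_i \wedge \neg \Jdue a$ lie in $[0, a_i]$ and map to $\neg a$ under $p_{i_0 i}$, so
\[
\Jdue(\neg a) = a_i \wedge \neg \Jdue a, \qquad \Jdue a \vee \Jdue(\neg a) = a_i.
\]
Finally, I would prove the key lemma $a_{i \vee j} = a_i \wedge a_j$ in $A_{i_0}$: the fact that $p_{i_0 i}|_{[0, a_i]}$ is an isomorphism forces $\ker(p_{i_0 i}) = [0, \neg a_i]$, and the factorisations $p_{i_0, i \vee j} = p_{i, i \vee j} \circ p_{i_0 i} = p_{j, i \vee j} \circ p_{i_0 j}$ put the ideal $[0, \neg a_i \vee \neg a_j]$ inside $\ker(p_{i_0, i \vee j}) = [0, \neg a_{i \vee j}]$, yielding $a_i \wedge a_j \leq a_{i \vee j}$; the opposite inequality follows from hypothesis (2), which gives $a_{i \vee j} \leq a_i, a_j$.

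With these at hand the easy axioms are immediate: (BCA:14) is $x \vee \neg x = 1$ in the Boolean algebra $\A_{i_0}$; (BCA:10) is $\Jzero \Jdue a = \Jdue(\neg \Jdue a) = \neg \Jdue a$ since $\neg \Jdue a \in A_{i_0}$; (BCA:13) is a short Boolean calculation substituting $\Jdue(\neg a) = a_i \wedge \neg \Jdue a$ and absorbing $\Jdue a \leq a_i$. For the quasi-equation (BCA:quasi), from $\Jdue a = \Jdue b$ and $\Jzero a = \Jdue(\neg a) = \Jdue(\neg b) = \Jzero b$ one obtains $a_i = \Jdue a \vee \Jdue(\neg a) = \Jdue b \vee \Jdue(\neg b) = a_j$, so $i = j$ by injectivity of $i \mapsto a_i$ (hypothesis (2)), after which $p_{i_0 i}$ applied to $\Jdue a = \Jdue b$ delivers $a = b$.

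The main obstacle is (BCA:18). Given $a \in A_i$, $b \in A_j$ and $k = i \vee j$, the left-hand side $\Jdue(a \vee b)$ lies in $[0, a_k]$ and maps under $p_{i_0 k}$ to $a \vee b = p_{i k}(a) \vee p_{j k}(b)$ in $\A_k$. On the right-hand side each of its three meets is bounded by $a_i \wedge a_j$, hence by the preparatory lemma it lies in $[0, a_{i \vee j}] = [0, a_k]$. Applying $p_{i_0 k}$ to the right-hand side, one uses the factorisations above to translate each summand into the corresponding product of images of $a, b, \neg a, \neg b$ in the Boolean algebra $\A_k$, and the Boolean identity $x \vee y = (x \wedge y) \vee (x \wedge \neg y) \vee (\neg x \wedge y)$ shows the sum equals $p_{i k}(a) \vee p_{j k}(b)$. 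Since both sides of (BCA:18) now lie in $[0, a_k]$ and share the same image under $p_{i_0 k}$, injectivity of $p_{i_0 k}|_{[0, a_k]}$ yields equality. The subtle point is that both sides must simultaneously be placed inside $[0, a_k]$ before the fibrewise Boolean comparison is permissible, which is exactly what makes the ingredient $a_i \wedge a_j = a_{i \vee j}$ indispensable.
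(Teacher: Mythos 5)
Your overall strategy---verifying the reduced quasi-equational basis of Theorem \ref{th: algebre di Bochvar2} by transporting everything into the Boolean algebra $\A_{i_0}$ along the isomorphisms $p_{i_0 i}\restriction[0,a_i]$---is sensible, and your treatment of the easy identities and of the final quasi-equation is correct (the paper itself dismisses all of this as ``mechanically checked''). The gap is in your key lemma $a_{i\vee j}=a_i\wedge a_j$, on which the whole verification of the identity for $\Jdue(\varphi\vee\psi)$ rests. Both arguments you offer prove the \emph{same} inequality $a_{i\vee j}\leq a_i\wedge a_j$: the kernel containment $[0,\neg a_i\vee\neg a_j]\subseteq[0,\neg a_{i\vee j}]$ reads $\neg(a_i\wedge a_j)\leq\neg a_{i\vee j}$, i.e. $a_{i\vee j}\leq a_i\wedge a_j$, which is exactly what hypothesis (2) already yields from $a_{i\vee j}\leq a_i$ and $a_{i\vee j}\leq a_j$. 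The converse inequality $a_i\wedge a_j\leq a_{i\vee j}$ is never established.

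Moreover it cannot be derived from hypotheses (1)--(2). Take $I=\{i_0,i,j,k\}$ with $i,j$ incomparable and $k=i\vee j$; let $\A_{i_0}$ be the $16$-element Boolean algebra with atoms $e_1,e_2,e_3,e_4$, put $a_{i_0}=1$, $a_i=e_1\vee e_2\vee e_3$, $a_j=e_1\vee e_2\vee e_4$, $a_k=e_1$; let $\A_\ell$ be (a disjoint copy of) the interval algebra $\mathbf{[0,a_\ell]}$ and let each transition map be induced by $x\mapsto x\wedge a_m$. All homomorphisms are surjective, each $p_{i_0\ell}$ with $\ell\neq i_0$ is non-injective and restricts to an isomorphism of $\mathbf{[0,a_\ell]}$ onto $\A_\ell$, and the $a_\ell$ are pairwise distinct and decrease along the semilattice order; yet $a_k=e_1<e_1\vee e_2=a_i\wedge a_j$. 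In the resulting structure identity (12) of Theorem \ref{th: algebre di Bochvar2} fails at $1_i,1_j$: the left-hand side is $\Jdue 1_k=e_1$ while the right-hand side collapses to $\Jdue 1_i\wedge\Jdue 1_j=a_i\wedge a_j=e_1\vee e_2$. So your lemma is precisely the missing ingredient: it is a further \emph{necessary} condition on genuine Bochvar algebras (there $\Jdue 1_{i\vee j}=\Jdue(1_i\wedge 1_j)=\Jdue 1_i\wedge\Jdue 1_j$ by Lemma \ref{lemma: aritmetica 2}), it is vacuous when the index semilattice is a chain (then $i\vee j\in\{i,j\}$), but over a non-linear index semilattice it must be added to the hypotheses rather than deduced from them. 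Your proposal cannot be completed as written, and the counterexample shows the obstruction is in the statement itself, not merely in your argument.
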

\begin{proof}
It is immediate to check that assumption (1) implies that $\A\in\mathsf{SIBSL}$. Since $p_{i_{0}i}\colon[0,\mathbf{a}_{i}]\to\A_{i}$ is an isomorphism (with inverse $p_{i_{0}i}^{-1}$) the maps $\Jdue$, $\Juno$ and $\Jzero$ are well defined and naturally extend to the whole algebra $\A$. It can be mechanically checked that $\mathbf{B}$ satisfies all the quasi-equations in Definition \ref{def: algebre di Bochvar}.
\end{proof}

The first part of condition (2) in Theorems \ref{th: Plonka sum decomposition} and \ref{thm: converse decomposition} can be replaced by the assumption that $1/\Ker_{p_{i_{0}i}}$ is a principal filter, for every $i\in I$. Therefore, we obtain the following.

\begin{corollary}\label{cor: corollario su th rappresentazione}
 Let $\A\in\mathsf{SIBSL}$ be with surjective and non injective homomorphisms.
 The following are equivalent:
 
\begin{enumerate}
 \item $\A$ is the reduct of a Bochvar algebra;
 \item for each $i\in I $, $1/\Ker_{p_{i_{0}i}}$ is a principal filter, with generator $a_{i}\in A_{i_{0}}$. Moreover,  if $i\neq j$  then $a_{i}\neq a_{j}$ and $a_{j}<a_{i}$ for each $i<j$;
 \item  for each $i\in I$ there exists an element $a_{i}\in A_{i_{0}}$ such that $p_{i_{0} i}\colon \mathbf{[0},\mathbf{a}_{i}]\to \A_i$ is an isomorphism. Moreover,  if $i\neq j$  then $a_{i}\neq a_{j}$ and $a_{j}<a_{i}$ for each $i<j$.
\end{enumerate}
\end{corollary}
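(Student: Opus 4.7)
The equivalence (1) $\Leftrightarrow$ (3) is already in hand: Theorem \ref{th: Plonka sum decomposition} gives one direction and Theorem \ref{thm: converse decomposition} the other. So the only new content is (2) $\Leftrightarrow$ (3), and the plan is to derive it from the classical correspondence between congruences on a Boolean algebra and its Boolean filters.

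The key observation is that, for any $a\in A_{i_{0}}$, the map $\pi_{a}\colon\A_{i_{0}}\to\mathbf{[0,a]}$ given by $\pi_{a}(x)=x\wedge a$ is a surjective Boolean homomorphism whose $1$-class (equivalently, the filter associated with its kernel congruence) is exactly the principal filter $[a,1]$. Since $p_{i_{0}i}\colon\A_{i_{0}}\to\A_{i}$ is also a surjective Boolean homomorphism (by hypothesis), we are simply comparing two quotients of the same Boolean algebra and using the fact that such a quotient is determined up to isomorphism by its associated filter.

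For (3) $\Rightarrow$ (2), I would argue as follows. Assume the restriction of $p_{i_{0}i}$ to $\mathbf{[0,a_{i}]}$ is an isomorphism onto $\A_{i}$. Then $p_{i_{0}i}(a_{i})=1_{i}$ (the restriction sends top to top), so $x\geq a_{i}$ implies $p_{i_{0}i}(x)\wedge p_{i_{0}i}(a_{i})=p_{i_{0}i}(a_{i})$, i.e.\ $p_{i_{0}i}(x)=1_{i}$, whence $[a_{i},1]\subseteq 1/\Ker_{p_{i_{0}i}}$. Conversely, if $p_{i_{0}i}(x)=1_{i}$, then $p_{i_{0}i}(x\wedge a_{i})=p_{i_{0}i}(a_{i})$, and since both $x\wedge a_{i}$ and $a_{i}$ lie in $[0,a_{i}]$ where $p_{i_{0}i}$ is injective, we conclude $x\wedge a_{i}=a_{i}$, i.e.\ $x\geq a_{i}$. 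Hence $1/\Ker_{p_{i_{0}i}}=[a_{i},1]$ is a principal Boolean filter with generator $a_{i}$.

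For (2) $\Rightarrow$ (3), suppose $1/\Ker_{p_{i_{0}i}}=[a_{i},1]$. Then the kernel congruences of $p_{i_{0}i}$ and $\pi_{a_{i}}$ coincide, so there exists a (unique) isomorphism $\phi\colon\mathbf{[0,a_{i}]}\to\A_{i}$ with $\phi\circ\pi_{a_{i}}=p_{i_{0}i}$. Evaluating at any $x\in[0,a_{i}]$ (so that $\pi_{a_{i}}(x)=x$) yields $\phi(x)=p_{i_{0}i}(x)$, meaning $\phi$ is exactly the restriction of $p_{i_{0}i}$ to $\mathbf{[0,a_{i}]}$, as required. The conditions about distinctness ($a_{i}\neq a_{j}$ for $i\neq j$) and strict monotonicity ($a_{j}<a_{i}$ for $i<j$) are stated identically in (2) and (3), so they transfer verbatim once the $a_{i}$'s of the two conditions are identified (as we have just done).

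No serious obstacle is anticipated: the argument is essentially a translation through the Boolean filter/congruence dictionary. The only point requiring a little care is verifying that the $a_{i}$ appearing in (2) and in (3) is really the \emph{same} element of $A_{i_{0}}$ — both are forced to equal the unique minimum of $1/\Ker_{p_{i_{0}i}}$, so once this is observed the order-theoretic side conditions carry over for free.
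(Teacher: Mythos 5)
Your proposal is correct and follows essentially the same route as the paper: both reduce the corollary to $(2)\Leftrightarrow(3)$ (taking $(1)\Leftrightarrow(3)$ from Theorems \ref{th: Plonka sum decomposition} and \ref{thm: converse decomposition}) and both rest on the Boolean filter/congruence correspondence for the quotient $\A_{i_0}\twoheadrightarrow\A_i$ versus the interval algebra $\mathbf{[0,a_i]}$. The paper merely phrases $(3)\Rightarrow(2)$ as a short contradiction and leaves $(2)\Rightarrow(3)$ as ``routine,'' which your argument via the projection $x\mapsto x\wedge a_i$ fills in correctly.
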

\begin{proof}
We just show $(2)\Leftrightarrow(3)$. Let $1/\Ker_{p_{i_{0}i}}$ be the filter generated by $a_{i}$, for some $a_i \in A_{i_{0}}$. It is routine to check that $p_{i_{0}i}\colon\mathbf{[0},\mathbf{a}_{i}]\to \A_i$ is an isomorphism. Conversely, assume that there is an element $a_i\in A_{i_0}$ such that $p_{i_{0}i}\colon\mathbf{[0},\mathbf{a}_{i}]\to \A_i$ is an isomorphism. Suppose, by contradiction, that $1/\Ker_{p_{i_{0}i}}$ is not principal, i.e. there is an element $b\in A_{i_{0}}$ such that $a_i\nleq b$ and $p_{i_{0}i}(b) = 1_i$. Then $c = b\wedge a_i$ is an element in $[0, a_i]$ such that $p_{i_{0}i}(c) = 1_i $, in contradiction with the fact that $p_{i_{0}i}$ is an isomorphism.
\end{proof}

We conclude this section with an example which empathizes the role of condition (2) in Corollary \ref{cor: corollario su th rappresentazione} and shows an $\mathsf{SIBSL}$ that can not be turned into a Bochvar algebra.

\begin{example}
 Let $\mathcal{P}(\mathbb{Z})$ be the power set Boolean algebra over the integers. This algebra is uncountable and atomic, with $\mathbb{Z}$ as top element. Consider now  the non-principal ideal $I$ containing all the finite subsets of $\mathbb{Z}$. 
 Observe that $\mathcal{P}(\mathbb{Z})/I$ is an atomless Boolean algebra.
Let  $\A$ be the P\l onka sum built over the two fibers $\mathcal{P}(\mathbb{Z}), \mathcal{P}(\mathbb{Z})/I$ and with (the unique non-trival) homomorphism the canonical map  $p\colon \mathcal{P}(\mathbb{Z})\to \mathcal{P}(\mathbb{Z})/I$. Clearly $\A$ satisfies condition (1) of Theorem \ref{thm: converse decomposition} (the homomorphism $p$ is  both surjective and non injective hence $\A\in\mathsf{SIBSL}$). However the filter $\mathbb{Z}/\Ker_{p}$ (corresponding to the ideal $I$)
is not principal, as $I $ is not. 
In the light of condition (2) of Corollary \ref{cor: corollario su th rappresentazione}, $\A$ is a $\mathsf{SIBSL}$ which is not the reduct of any Bochvar algebra.
 
 \end{example}

The results included in the subsequent sections will strongly make use of the P\l onka sum decomposition of a Bochvar algebra provided in Theorem \ref{th: Plonka sum decomposition}. In the exposition of many results we will take for granted some of the details introduced in this section.

\section{On  quasivarieties of Bochvar algebras}\label{sec: sottoquasi e estensioni}

The logic $\B$ is algebraizable with respect to the quasivariety of Bochvar algebras and it is well known that there exists a dual isomorphism between the lattice of extensions of $\B$ and the lattice of subquasivarieties of $\mathsf{BCA}$. In this section we characterize such lattices, proving that they consist of the three-elements chain.  In order to do so, we take advantage of the recent results in \cite{moraschini2020singly}, therefore applying general properties   of passive structurally complete (PSC) quasivarieties. PSC is a weakened variant of structural completeness (see, among others,\cite{raftery2016admissible,campercholi2015structural,rybakov1997admissibility,dzik2016almost}), a notion defined, in its algebraic version, as follows. 
\begin{definition}\label{def: SC}
 A quasivariety  $\mathsf{K}$ is structurally complete (SC) if, for every quasivariety $\mathsf{K}^{\prime}$:
\[
\mathsf{K}^{\prime}\subsetneq\mathsf{K}\;\Rightarrow\;\mathbb{V}(\mathsf{K}^{\prime})\subsetneq\mathbb{V}(\mathsf{K}),
\]
\end{definition}
\noindent
where $\mathbb{V}(\mathsf{K})=HSP(\class{K})$ is the variety generated by $\class{K}$.
Recall that a quasi-identity $ \varphi_{1}\thickapprox\psi_{1}\;\&\dots\&\;\varphi_{n}\thickapprox\psi_{n}\Rightarrow\varphi\thickapprox\psi$ is passive in a quasivariety $\mathsf{K}$ if, for every substitution $h\colon\Fm\to\Fm$, there exists an algebra $\A\in\mathsf{K}$ such that $\A\nvDash h(\varphi_{i})\thickapprox h(\psi_{i})$ for some $1\leq i\leq n$. Put differently, a quasi-identity is passive if it is equivalent to a quasi-identity whose conclusion is $x\thickapprox y$, where $x,y$ are variables that do not appear in the antecedent (see \cite[Sec. 4.3]{agliano2024structural}) for details. We take the following result  from \cite{wronski2009overflow} as a definition of \emph{passive structural completeness} (PSC).
\begin{theorem}[\cite{wronski2009overflow}]\label{def: PSC}
A quasivariety $\mathsf{K}$ is PSC  if every passive quasi-identity over $\mathsf{K}$ is valid in $\mathsf{K}$. 
\end{theorem}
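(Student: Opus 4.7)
The statement is presented as Wroński's characterization being adopted as the working definition of PSC, so any ``proof'' must connect it to the more traditional formulation. My plan is to argue from the standard abstract-algebraic-logic definition, according to which $\class{K}$ is PSC when every admissible passive quasi-identity over $\class{K}$ is already derivable, i.e.\ valid in $\class{K}$. The goal is then to show that, in the purely passive setting, admissibility is automatic, so the admissibility-plus-derivability requirement collapses to the simpler condition stated in the theorem.

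The first step is to unpack passivity explicitly. Recall that a quasi-identity
\[
\varphi_{1}\thickapprox\psi_{1}\;\&\;\ldots\;\&\;\varphi_{n}\thickapprox\psi_{n}\Rightarrow\varphi\thickapprox\psi
\]
is passive in $\class{K}$ precisely when, for every substitution $h\colon\Fm\to\Fm$, there exists some $\A\in\class{K}$ in which $h(\varphi_{i})\thickapprox h(\psi_{i})$ fails for at least one $i$. Equivalently, no substitution makes all the premises simultaneously valid throughout $\class{K}$. Since the admissibility condition only requires the conclusion to hold whenever some substitution renders all the premises valid, a passive quasi-identity is vacuously admissible: the hypothesis governing admissibility is never triggered.

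The second step is the immediate consequence: the class of passive admissible quasi-identities coincides with the class of passive quasi-identities. Thus, the traditional requirement ``every admissible passive quasi-identity over $\class{K}$ is valid in $\class{K}$'' is equivalent to ``every passive quasi-identity over $\class{K}$ is valid in $\class{K}$'', which is the content of the theorem. The converse direction, should one read the ``if'' in the statement as a biconditional definitional schema, is trivial: if every passive quasi-identity is valid in $\class{K}$, then in particular every admissible passive quasi-identity is valid in $\class{K}$.

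I do not expect a genuine obstacle in this argument, since the result is essentially a definitional reformulation that exploits the emptiness of the admissibility hypothesis on passive rules. The only point that requires a bit of care is stating admissibility in the quasi-equational setting in such a way that the vacuous satisfaction for passive rules is manifest; once this formulation is in place, the equivalence with the stated condition is a one-line reduction, and the theorem follows.
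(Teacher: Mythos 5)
Your argument is correct, but note that the paper does not actually prove this statement: it explicitly imports it from Wro\'nski's work and \emph{adopts it as the definition} of PSC, so there is no proof in the paper to compare against. What you supply is the standard justification for why this characterization agrees with the more traditional formulation (``every admissible passive rule is derivable''), and your reduction is sound: with the paper's notion of passivity --- for every substitution $h$ some algebra in $\mathsf{K}$ refutes some premise $h(\varphi_i)\thickapprox h(\psi_i)$, i.e.\ no substitution makes all premises valid throughout $\mathsf{K}$ --- the antecedent of the admissibility condition is never triggered, so every passive quasi-identity is vacuously admissible and the class of admissible passive quasi-identities coincides with the class of passive ones. This is exactly the bridge the paper implicitly relies on later, when it switches to the logical side and states that a logic is PSC when every passive \emph{admissible} rule is derivable (the discussion preceding Corollary \ref{rem: SC delle logiche}). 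So your proposal is a correct and useful gloss on a statement the paper treats as definitional; the only caveat is that, strictly speaking, you are proving the equivalence of two definitions rather than a theorem the authors establish.
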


For a quasivariety $\mathsf{K}$, one of the consequences of being PSC amounts to be generated by a single algebra, i.e. to be \emph{singly generated} (see \cite[Section 7]{moraschini2020singly}). Recall that an algebra $\A$ is  a retract of $\mathbf{B}$ if there exist two homomorphisms $\iota\colon\A\to\mathbf{B}$ and $r\colon\mathbf{B}\to\A$   such that $r\circ \iota$ is the identity map on $\A$. This forces $r$ to be surjective and $\iota$ to be injective: we will call $r$ a \emph{retraction}.  $\A$ is a \emph{common retract} of a quasivariety $\mathsf{K}$ if it is a retract of every non-trivial member of $\mathsf{K}$. We denote by $Ret(\mathsf{K},\A)=\{\mathbf{B}\in\mathsf{K}: \A \ \text{is a retract of}\ \mathbf{B}\}$ the members of $\mathsf{K}$ having $\A$ as a retract.

The main tool that will be instrumental for our purposes is the following.

\begin{theorem}\cite[Thm. 7.11]{moraschini2020singly}\label{thm: morasco su psc}
 Let $\mathsf{K}$ be a quasivariety of finite type and $\A\in\mathsf{K}$ a finite $0$-generated algebra.  Then $Ret(\mathsf{K},\A)$ is a maximal PSC subquasivariety of $\mathsf{K}$.\footnote{$\mathsf{K}^{\prime}$ is a maximal PSC subquasivariety of $\mathsf{K}$ when for every PSC quasivariety $\mathsf{K}^{\prime\prime}$,  if $\mathsf{K}^{\prime}\subseteq\mathsf{K}^{\prime\prime}\subseteq \mathsf{K}$ then $\mathsf{K}^{\prime\prime}=\mathsf{K}^{\prime}$. }
\end{theorem}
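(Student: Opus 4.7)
The plan is to establish three things in turn: (a) $Ret(\mathsf{K},\A)$ is a subquasivariety of $\mathsf{K}$; (b) it is PSC; and (c) no PSC subquasivariety of $\mathsf{K}$ strictly extends it.

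For (a), I would check closure under the quasivariety operators $I,S,P,P_{u}$. The products step is straightforward: given retract pairs $(\iota_j, r_j)$ for each $\mathbf{B}_j$, the diagonal embedding $a\mapsto(\iota_j(a))_j$ paired with any $r_k\circ\pi_k$ yields a retract pair for $\prod_j \mathbf{B}_j$. Ultraproducts use the finiteness of $\A$: since $\prod_U\A\cong\A$ for any ultrafilter $U$, the componentwise $(\iota_j,r_j)$ descend to a retract pair of $\prod_U\mathbf{B}_j$. The subalgebra step is where the $0$-generated hypothesis becomes essential: if $\mathbf{C}\leq\mathbf{B}$ and $(\iota,r)$ is a retract pair for $\mathbf{B}$, then $\iota(\A)$, being $0$-generated, is already contained in $\mathbf{C}$, so $\iota\colon\A\to\mathbf{C}$ together with $r|_{\mathbf{C}}\colon\mathbf{C}\to\A$ is a retract pair.

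For (b), let $\mathsf{K}':=Ret(\mathsf{K},\A)$ and suppose towards contradiction that a passive quasi-identity $\Phi\Rightarrow x\approx y$ in $\mathsf{K}'$ fails via some valuation $v\colon Var\to C$ in a non-trivial $\mathbf{C}\in\mathsf{K}'$. Composing with the retraction $r\colon\mathbf{C}\to\A$ yields $w=r\circ v$ satisfying $\Phi$ in $\A$. Because $\A$ is finite and $0$-generated, each $a\in A$ is denoted by a closed term $t_a$. Define the substitution $\sigma(x):=t_{w(x)}$. On any $\mathbf{D}\in\mathsf{K}'$ with embedding $\iota_{\mathbf{D}}\colon\A\to\mathbf{D}$, any valuation interprets $\sigma(\varphi_i)$ as $\iota_{\mathbf{D}}(w(\varphi_i))$ (closed terms are evaluated via $\iota_{\mathbf{D}}$ and $\iota_{\mathbf{D}}$ is a homomorphism), so $\iota_{\mathbf{D}}(w(\varphi_i))=\iota_{\mathbf{D}}(w(\psi_i))$ and $\sigma(\Phi)$ holds universally in $\mathsf{K}'$, contradicting passivity.

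For (c), suppose $\mathsf{K}''\supsetneq\mathsf{K}'$ is PSC and pick a non-trivial $\mathbf{B}\in\mathsf{K}''\setminus\mathsf{K}'$. If the $0$-generated subalgebra of $\mathbf{B}$ is a proper quotient of $\A$, there exist closed terms $s,t$ with $s^{\A}\neq t^{\A}$ and $s^{\mathbf{B}}=t^{\mathbf{B}}$; then $s\approx t\Rightarrow x\approx y$ is passive in $\mathsf{K}''$ (every substitution is vacuous on $s,t$, and $\A\in\mathsf{K}''$ falsifies the antecedent) but fails in the non-trivial $\mathbf{B}$, contradicting PSC. If instead $\A$ embeds in $\mathbf{B}$ without a retraction existing, one needs to witness the non-retractability by a passive quasi-identity: using the finite diagram of $\A$, introduce a fresh variable $z$ together with finitely many equations (with the closed parameters $t_a$) that encode ``$z$ behaves as some $t_a$ under the operations'', and conclude that the resulting quasi-identity is passive in $\mathsf{K}''$ yet witnessed to fail in $\mathbf{B}$.

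The main obstacle is the second subcase of (c): converting the \emph{non-existence} of a splitting homomorphism $\mathbf{B}\to\A$ into a \emph{single} passive quasi-identity. The finiteness of $\A$ and the finite-type assumption on $\mathsf{K}$ are what make a finitary encoding possible; I expect this step to require the finer characterization of PSC quasivarieties developed in \cite{moraschini2020singly} (likely via homomorphisms onto $0$-generated algebras and an ultraproduct reduction to the finitely generated case), rather than a direct diagram argument.
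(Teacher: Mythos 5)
The paper offers no proof of this statement to compare against: it is imported verbatim from \cite{moraschini2020singly} as Theorem 7.11, so your attempt can only be judged on its own terms. Parts (a) and (b) of your outline are essentially sound. In (a) you should add that the trivial algebras must be counted among $Ret(\mathsf{K},\A)$ (otherwise the class is not closed under empty products and cannot be a quasivariety), and the subalgebra step works precisely because $\iota(\A)$ is the subalgebra of $\mathbf{B}$ generated by $\emptyset$, hence lies inside every subalgebra. Part (b) is a correct and rather elegant argument: pushing the falsifying valuation down along the retraction and lifting it back through closed terms is exactly the right use of the $0$-generated hypothesis.

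The genuine gap is the one you flag yourself, in part (c), and it is twofold. First, your case split is not exhaustive: the $0$-generated subalgebra $\mathbf{B}_{0}$ of $\mathbf{B}$ need be neither a quotient of $\A$ nor isomorphic to it. The missing configuration (closed terms $s,t$ with $s^{\A}=t^{\A}$ but $s^{\mathbf{B}}\neq t^{\mathbf{B}}$) is however dispatched by the same quasi-identity $s\approx t\Rightarrow x\approx y$, now with $\mathbf{B}$ witnessing passivity and the non-trivial $\A$ witnessing failure. Second, and more seriously, the subcase ``$\A$ embeds but does not retract'' is left as a hope; it can in fact be closed by a direct finiteness argument rather than the fresh-variable encoding you gesture at. Since $\A$ is finite, $Hom(\mathbf{B},\A)$ is the inverse limit of the finite sets $Hom(\mathbf{C},\A)$ as $\mathbf{C}$ ranges over the finitely generated subalgebras of $\mathbf{B}$; a directed inverse limit of non-empty finite sets is non-empty, so if no homomorphism $\mathbf{B}\to\A$ exists there is already some $\mathbf{C}$ generated by $c_{1},\dots,c_{n}$ with $Hom(\mathbf{C},\A)=\emptyset$. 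For each of the finitely many tuples $\vec{a}\in A^{n}$ choose terms $u_{\vec{a}},v_{\vec{a}}$ with $u_{\vec{a}}^{\mathbf{C}}(\vec{c})=v_{\vec{a}}^{\mathbf{C}}(\vec{c})$ but $u_{\vec{a}}^{\A}(\vec{a})\neq v_{\vec{a}}^{\A}(\vec{a})$; then $\bigwedge_{\vec{a}}u_{\vec{a}}(\vec{x})\approx v_{\vec{a}}(\vec{x})\Rightarrow y\approx z$ is passive in $\mathsf{K}''$ (every substitution instance of the antecedent fails in $\A$ under every valuation) yet is refuted in the non-trivial $\mathbf{C}\in\mathsf{K}''$, contradicting PSC. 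Note finally that any homomorphism $r\colon\mathbf{B}\to\A$ automatically splits the embedding, since $r\circ\iota$ is an endomorphism of a $0$-generated algebra and hence the identity. With these repairs your outline becomes a complete proof.
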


The following summarizes the properties of the quasivariety $\mathsf{BCA}$ with respect to the above introduced notions.

\begin{proposition}\label{prop: PSC and SC BCA}

The following hold:
\begin{enumerate}[(i)]
 \item $\mathsf{BCA}$ is not PSC;
 \item $\mathbf{B}_{2}$ is the $0$-generated algebra in $\mathsf{BCA}$;
 \item $Ret(\mathsf{BCA},\mathbf{B}_{2})$ is a maximal PSC subquasivariety of  $\mathsf{BCA}$.
\end{enumerate}
\end{proposition}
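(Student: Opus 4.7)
The plan is to handle the three items in sequence, with (i) requiring an explicit passive quasi-identity that fails in $\mathsf{BCA}$, (ii) being a direct inspection of the constant-generated subalgebra of $\WKt$, and (iii) reducing immediately to the cited Theorem \ref{thm: morasco su psc}.

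For (ii) I would argue first, since it feeds into (iii). Because $\mathsf{BCA} = ISP(\WKt)$, the free $0$-generated algebra of $\mathsf{BCA}$ embeds into $\WKt$ as the subalgebra generated by the constants $0,1$. A direct check on the tables in Figure \ref{fig:WKe} shows that $\{0,1\}$ is closed under $\neg$, $\vee$, $\wedge$, $\Jzero$, $\Juno$, $\Jdue$ (in particular $\Juno 0 = \Juno 1 = 0$ and $\Jdue$, $\Jzero$ restrict to identity and negation on $\{0,1\}$). Thus the $0$-generated subalgebra of $\WKt$ has universe $\{0,1\}$ and is precisely the Bochvar algebra $\mathbf{B}_{2}$ in the sense of Example \ref{example BA BCA}.

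For (i), the candidate passive quasi-identity is
\[
q \; := \; x \thickapprox \neg x \; \Rightarrow \; y \thickapprox z.
\]
This fails in $\WKt$ (hence in $\mathsf{BCA}$) under the valuation $v(x) = \ant$, $v(y) = 0$, $v(z) = 1$, since $\neg \ant = \ant$ while $0 \neq 1$. To see that $q$ is passive, I would suppose for contradiction that some substitution $h$ is a unifier, i.e. $\mathsf{BCA} \vDash h(x) \thickapprox \neg h(x)$. Since $\mathsf{BCA}$ is generated by $\WKt$, this would mean that the term $h(x)$ evaluates to the unique fixpoint $\ant$ of $\neg$ under every assignment into $\WKt$. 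But the subset $\{0,1\} \subseteq \WKt$ is a subuniverse by the observation used in (ii), so the all-zero valuation of the variables of $h(x)$ yields a value in $\{0,1\}$, contradicting that $h(x)$ is always $\ant$. Hence no unifier exists and $q$ is passive; together with its failure this gives (i).

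Finally (iii) is a direct invocation of Theorem \ref{thm: morasco su psc}: the similarity type of $\mathsf{BCA}$ is finite, and by (ii) the algebra $\mathbf{B}_{2}$ is a finite $0$-generated member of $\mathsf{BCA}$, so $Ret(\mathsf{BCA}, \mathbf{B}_{2})$ is a maximal PSC subquasivariety of $\mathsf{BCA}$. The only real obstacle I anticipate is verifying passivity in (i); the other two points are essentially book-keeping, and the closure of $\{0,1\}$ under all Bochvar operations in $\WKt$ is the single fact doing the work in both (i) and (ii).
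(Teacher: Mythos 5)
Your proposal is correct and follows essentially the same route as the paper: for (i) exhibit a passive quasi-identity that fails in $\mathsf{BCA}$, for (ii) identify $\mathbf{B}_2$ as the $0$-generated algebra, and for (iii) invoke Theorem \ref{thm: morasco su psc}. The only cosmetic difference is that the paper uses the quasi-identity $(NF)$ with antecedent $J_{_1}\varphi\thickapprox 1$ and checks passivity via the P\l onka decomposition, whereas you use the equivalent form $x\thickapprox\neg x\Rightarrow y\thickapprox z$ (an equivalence the paper itself records immediately after the proposition) and verify passivity via the subuniverse $\{0,1\}$ of $\WKt$; both verifications are sound.
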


\begin{proof}
 $(i)$. The quasi-identity
 \begin{equation}\label{NF}\tag{$NF$}
 J_{_1}\varphi\thickapprox 1\Rightarrow \psi\thickapprox 1
\end{equation} 
is passive in $\mathsf{BCA}$. Indeed, for each substitution, the antecedent $\Juno \varphi\thickapprox 1$ is falsified  in every Bochvar algebra containing no trivial algebra in its P\l onka sum decomposition. On the other hand, it is immediate to check that $\mathsf{BCA}\nvDash (NF)$.

 
 $(ii)$ is straightforward, as every Boolean algebra is also a Bochvar algebra (see Example \ref{example BA BCA}), and $\mathbf{B}_{2}$ is the $0$-generated Boolean algebra.
 
 $(iii)$ follows from $(ii)$ and Theorem \ref{thm: morasco su psc}.
\end{proof}

It is easy to check that, in $\mathsf{BCA}$, $(NF)$ is equivalent to the quasi-identity 
$$\varphi\thickapprox\neg \varphi\;\Rightarrow\; \psi\thickapprox \delta.$$

Since $(NF)$ is valid in the quasivariety $Ret(\mathsf{BCA},\mathbf{B}_{2})$, it demands that the underlying involutive bisemilattice of any of its non-trivial members lacks trivial fibers. A stronger fact is established in the following lemma.

\begin{lemma}\label{lem: ret e NBCA}
Let $\A\in\mathsf{BCA}$ be non-trivial. Then $\A\in Ret(\mathsf{BCA},\mathbf{B}_{2})$ if and only if $\A\vDash(NF)$.
\end{lemma}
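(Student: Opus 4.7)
The plan is to prove both directions, the forward implication being essentially immediate and the reverse requiring us to construct a retraction $r\colon\A\to\mathbf{B}_{2}$ using the P\l onka sum decomposition of $\A$ from Theorem \ref{th: Plonka sum decomposition}.

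For the forward direction, suppose $\mathbf{B}_{2}$ is a retract of $\A$ via a retraction $r\colon\A\to\mathbf{B}_{2}$. Since $J_{_1}$ is the constant $0$ function on $\mathbf{B}_{2}$ (Example \ref{example BA BCA}), if some $a\in A$ satisfied $J_{_1}^{\A}a=1^{\A}$, applying $r$ would yield $1=r(1^{\A})=r(J_{_1}a)=J_{_1}^{\mathbf{B}_{2}}r(a)=0$, a contradiction. Hence no element of $\A$ triggers the premise of $(NF)$, which therefore holds vacuously.

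For the converse, let $\A$ be non-trivial with $\A\vDash(NF)$. By Lemma \ref{lemma: Juno}(3) a fixpoint $a=\neg a$ forces $J_{_1}a=1$ and hence (via $(NF)$) the triviality of $\A$, so $\A$ has no fixpoint of negation, equivalently the P\l onka sum decomposition $\PL(\A_{i})_{i\in I}$ has no trivial fiber. In particular, each $a_{i}=\Jdue 1_{i}\in A_{i_{0}}$ of Theorem \ref{th: Plonka sum decomposition} is strictly positive, since $\A_{i}\cong\mathbf{[0,a_{i}]}$ is non-trivial. I would then build $r$ by first selecting a Boolean homomorphism $h_{i_{0}}\colon\A_{i_{0}}\to\mathbf{B}_{2}$ with $h_{i_{0}}(a_{i})=1$ for every $i\in I$, and then gluing the induced family $\{h_{i}\}_{i\in I}$ into a Bochvar homomorphism on all of $\A$.

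The main obstacle is finding $h_{i_{0}}$, equivalently an ultrafilter $U\subseteq\A_{i_{0}}$ containing $\{a_{i}:i\in I\}$. This family generates a proper filter: for $i_{1},\dots,i_{n}\in I$ with join $j=i_{1}\vee\cdots\vee i_{n}\in I$, the monotonicity clause of Theorem \ref{th: Plonka sum decomposition} gives $a_{i_{k}}\geq a_{j}$ for each $k$, hence $a_{i_{1}}\wedge\cdots\wedge a_{i_{n}}\geq a_{j}\neq 0$, and the ultrafilter lemma yields the desired $U$. The condition $h_{i_{0}}(a_{i})=1$ is precisely what makes $h_{i_{0}}$ factor through the surjection $p_{i_{0}i}$ (via the interval characterization of Proposition \ref{prop: immagine tramite J2}), so we obtain compatible Boolean homomorphisms $h_{i}\colon\A_{i}\to\mathbf{B}_{2}$ with $h_{i}\circ p_{i_{0}i}=h_{i_{0}}$ that assemble into a P\l onka sum homomorphism $r\colon\A\to\mathbf{B}_{2}$. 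Preservation of $\Jdue$ by $r$ is routine: for $a\in A_{i}$, since $\Jdue^{\A}a\in\A_{i_{0}}$ and $p_{i_{0}i}(\Jdue^{\A}a)=a$ (Remark \ref{rem: Jdue inversa a destra}), we get $r(\Jdue a)=h_{i_{0}}(\Jdue a)=h_{i}(a)=r(a)=\Jdue^{\mathbf{B}_{2}}r(a)$; preservation of $\Jzero$ and $\Juno$ follows since both are definable from $\Jdue$ and $\neg$. Taking $\iota\colon\mathbf{B}_{2}\to\A$ as the canonical embedding $0\mapsto 0^{\A},\,1\mapsto 1^{\A}$ (with image in $\A_{i_{0}}$) gives $r\circ\iota=\mathrm{id}_{\mathbf{B}_{2}}$, exhibiting $\mathbf{B}_{2}$ as a retract of $\A$.
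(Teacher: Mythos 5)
Your proof is correct, and for the substantive direction it follows essentially the same route as the paper: the paper also deduces from $(NF)$ that the P\l onka decomposition has no trivial fibers, builds an ultrafilter $F_{0}$ on the bottom fiber $\A_{i_{0}}$ containing the preimages $p_{i_{0}i}^{-1}(1_{i})$ (equivalently, your generators $a_{i}=\Jdue 1_{i}$), pushes it forward to ultrafilters $F_{i}=p_{i_{0}i}[F_{0}]$ on every fiber, and takes the characteristic function of $F=\bigcup_{i}F_{i}$ as the retraction, checking compatibility with $\land$, $\neg$ and $\Jdue$ exactly as you do. Your packaging via the principal-filter/interval characterization is if anything a bit tidier: justifying properness through $a_{i_{1}}\wedge\dots\wedge a_{i_{n}}\geq a_{j}>0$ with $j$ the join of the indices handles all fibers at once, whereas the paper's phrasing fixes a single $i>i_{0}$. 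Where you genuinely diverge is the forward direction: the paper derives it abstractly from Proposition \ref{prop: PSC and SC BCA}, i.e.\ from the fact that $Ret(\mathsf{BCA},\mathbf{B}_{2})$ is a PSC quasivariety together with the passivity of $(NF)$, while you give a direct two-line computation ($1=r(\Juno a)=\Juno^{\mathbf{B}_{2}}r(a)=0$). Your version is more elementary and self-contained; the paper's buys nothing extra here beyond reusing machinery it has already set up. Both are fine.
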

\begin{proof}
 $(\Rightarrow)$. By Proposition \ref{prop: PSC and SC BCA}, $Ret(\mathsf{BCA}, \mathbf{B}_{2})$ is a (maximal) PSC quasivariety and $(NF)$ is a passive quasi-identity, therefore $Ret(\mathsf{BCA},\mathbf{B}_{2})\vDash(NF)$. \\
\noindent 
 $(\Leftarrow)$. Suppose $\A\vDash(NF)$, therefore its $\mathsf{IBSL}$-reduct lacks trivial fibers. If $\A$ is a Boolean algebra, the conclusion immediately follows. Differently, $\lvert I\rvert\; \geq 2$. Let $i>i_{0}$ and consider the set $X=\{a\in A_{i_{0}}\;|\; p_{i_{0}i} (a) =1_{i}\}$; let $F_0$ be an ultrafilter on $\A_{0}$ containing $X$ (it exists since the filter generated by $X$ is proper, as the P\l onka decomposition lacks trivial algebras). Since all the homomorphisms between fibers of $\A$ are surjective and $\A$ lacks trivial fibers, $F_{j}=p_{i_{0}j}[F_{0}]$ is an ultrafilter on $\A_{j}$, for each $j\in I$ (this readily follows from the fact that $F_0$ is an ultrafilter extending $X$). Set 
 $F=\displaystyle\bigcup_{k\in I}p_{i_{0}k}[F_{0}]$ and let $r\colon\A\to\mathbf{B}_{2}$ be defined, for each $a\in A$, as 
\[
r(a) = \begin{cases}
1, \text{ if } a\in F;  \\
0 \text{ otherwise}.
\end{cases}
\] 
We want to show that $r$ is a retraction.
 We show that $r$ is compatible with $\land$, $ \neg $ and $\Jdue$. Let $a\in A_{i}$, $b\in A_{j}$ and set $k=i\lor j$: we have $r(a\land b)=1\iff a\land b\in F_{k}\iff p_{ik}(a),p_{jk}(b)\in F_{k}$. Suppose, by contradiction, $a\notin F_{i}$. Then $p^{-1}_{i_{0}i}(a)\notin F_{0}$ which entails $p_{i_{0}k}(p^{-1}_{i_{0}i}(a))\notin F_{k}$. However, by the composition property of P\l onka homomorphisms, $p_{i_{0}k}(p^{-1}_{i_{0}i}(a))= p_{ik}(p_{i_{0}i}(p^{-1}_{i_{0}i}(a))=p_{ik}(a)\in F_{k}$, a contradiction. So $a\in F_{i}$. The same argument applies to $b$, and we conclude $b\in F_{j}$. Therefore $r(a)\land r(b)=1=r(a\land b)$. \\
 \noindent
Let $r(a) = 1$, for some $a\in A$, then $a \in F_j$, for some $j\in I$ and, since $F_j$ is an ultrafilter on $\A_j$, $\neg a\not\in F_j$, thus $r(\neg a) = 0 = \neg r(a)$. \\
\noindent 
Finally, we show the compatibility of $r$ with $J_{_2}$. For any $a\in A_{j}$, we have $r(J_{_2}a)=1\iff J_{_2}a\in F_{0}\iff p_{i_{0}i}(J_{_2}a)=a\in F_{i}\iff J_{_2}r(a)=1$.  So $r$ is a surjective homomorphism. Setting $\iota\colon\mathbf{B}_{2}\to \A$ such that $\iota(1)=1, \iota(0)=0$, we have that $r$ is a retraction. This shows $\mathbf{B}_{2}$ is a retract of $\A$, so $\A\in Ret(\mathsf{BCA},\mathbf{B}_{2})$.
\end{proof}

\begin{corollary}\label{cor: NBCA no trivial fibers}
 $Ret(\mathsf{BCA},\mathbf{B}_{2})$ is the quasivariety axiomatized by adding $(NF)$ to the quasi-equational theory of $\mathsf{BCA}$. Moreover, a Bochvar algebra belongs to $Ret(\mathsf{BCA},\mathbf{B}_{2})$ if and only if its P\l onka sum decomposition lacks trivial fibers.
\end{corollary}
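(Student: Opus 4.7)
The plan is to derive both assertions as essentially direct consequences of Lemma \ref{lem: ret e NBCA}, combined with the characterization of fixpoints as trivial fibers in the P\l onka sum representation of a single-fixpoint involutive bisemilattice recalled earlier in the paper.

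For the first claim, I would let $\class{K}$ denote the subquasivariety of $\class{BCA}$ axiomatized by $(NF)$ and argue the two inclusions separately. The inclusion $Ret(\class{BCA},\two)\subseteq\class{K}$ is immediate: by Proposition \ref{prop: PSC and SC BCA}, $Ret(\class{BCA},\two)$ is PSC, and since $(NF)$ is passive it must be valid on every member of $Ret(\class{BCA},\two)$. For the converse inclusion, any non-trivial $\A\in\class{K}$ satisfies $(NF)$ by definition, hence lies in $Ret(\class{BCA},\two)$ by the $(\Leftarrow)$ direction of Lemma \ref{lem: ret e NBCA}; the trivial algebra lies in every quasivariety, so the inclusion $\class{K}\subseteq Ret(\class{BCA},\two)$ follows.

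For the second claim, I would exploit the equivalence (already observed in the paper right after Proposition \ref{prop: PSC and SC BCA}) between $(NF)$ and the passive quasi-identity $\varphi\thickapprox\neg\varphi\Rightarrow\psi\thickapprox\delta$. An algebra $\A\in\class{BCA}$ satisfies the latter precisely when it has no element $a$ with $a=\neg a$, i.e. no fixpoint for negation. Since $\A$ has an $\class{SIBSL}$-reduct (Proposition \ref{prop: each BCA is SIBSL}), and since in the P\l onka sum decomposition of a single-fixpoint involutive bisemilattice a fixpoint coincides with the universe of a trivial fiber (as remarked in Section \ref{sec: 3}), absence of fixpoints is equivalent to absence of trivial fibers. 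Combining this chain of equivalences with Lemma \ref{lem: ret e NBCA} yields the stated characterization.

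No step presents a genuine obstacle, since all the substantive work was done in Lemma \ref{lem: ret e NBCA} and in the structural results of Section \ref{sec: 3}; the only mild point to articulate carefully is the translation between the syntactic condition $(NF)$, the semantic condition ``no fixpoint'', and the structural condition ``no trivial fiber'' in the P\l onka decomposition. This proof is therefore naturally short and essentially just packages previously established facts into the desired corollary.
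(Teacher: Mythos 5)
Your proposal is correct and follows essentially the same route as the paper: the corollary is meant to be an immediate repackaging of Lemma \ref{lem: ret e NBCA} together with the equivalence (already used in that lemma's proof) between satisfying $(NF)$, having no fixpoint for negation, and having no trivial fiber in the P\l onka decomposition. Your extra care about the trivial algebra and about spelling out the syntactic/semantic/structural translations is harmless and matches the level of detail the paper leaves implicit.
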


 In analogy with the terminology introduced in \cite{paoli2021extensions}, we call the quasivariety $Ret(\mathsf{BCA},\mathbf{B}_{2})$: \emph{nonparaconsistent Bochvar algebras}, $\mathsf{NBCA}$ in brief. Since $\mathsf{NBCA}$ is  a (maximal) PSC subquasivariety of $\mathsf{BCA}$, we know that $\mathsf{NBCA}$, as a quasivariety,  is generated by a single algebra $\alg{A}$, namely $\class{NBCA}=ISPP_{u}(\alg{A})=ISP(\alg{A})$ (\cite[Thm.4.3]{moraschini2020singly}).
  We now introduce an example of Bochvar algebra which will play an important role. 
\begin{example}[$\mathbf{B}_{4}\oplus\mathbf{B}_{2}$]
Let $\mathbf{B}_{4}\oplus\mathbf{B}_{2}$ denote the involutive bisemilattice whose P\l onka sum consists of a system made of the four-element Boolean algebra $\mathbf{B}_{4}$, the two-element one $\mathbf{B}_{2}$, the two-element lattice as index set as in the following diagram (where the arrows stand for the homomorphism $p_{i_{0}i}\colon \mathbf{B}_{4}\to \mathbf{B}_{2}$).

\[
 \begin{tikzcd}[arrows = {dash}]\label{pic: 4+2}
& & & \top & \\
 & & & &   \\
 & 1 \arrow[uurr, ->]& & \perp\arrow[uu, dash] & \\
 a\arrow[ur, dash]\arrow[uuurrr, bend left = 30, ->] & & \neg a \arrow[ul, dash]\arrow[ur, ->]& & \\
   & 0 \arrow[ul, dash]\arrow[ur, dash]\arrow[uurr, ->, bend right = 30]& & &
  \end{tikzcd} \tag{Figure 2}\]
It follows from the structure theory developed in Section \ref{sec: 3} that the unique way to turn $\mathbf{B}_{4}\oplus\mathbf{B}_{2}$ into a Bochvar algebra is by defining $\Jdue \top = a$, $\Jdue\perp = 0$, $\Juno \top = \Juno\perp = \neg a$ and $\Jzero\top = 0$, $\Jzero\perp = a$ (recall that $\Jdue$ is the identity on $\mathbf{B}_4$, $\Juno$ is the constant onto $0$ and $\Jzero$ is negation). With a slight abuse of notation we will indicate this unique Bochvar algebra by $\mathbf{B}_{4}\oplus\mathbf{B}_{2}$, as its $\mathsf{IBSL}$-reduct. \qed
\end{example}

\begin{theorem}\label{thm: nbca generated by 4+2}
The quasivariety $\mathsf{NBCA} $ is generated by $\mathbf{B}_{4}\oplus\mathbf{B}_{2}$. 
\end{theorem}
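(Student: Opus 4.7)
The inclusion $\mathbb{Q}(\mathbf{B}_4 \oplus \mathbf{B}_2) \subseteq \mathsf{NBCA}$ is immediate from Corollary~\ref{cor: NBCA no trivial fibers}, since both fibers of $\mathbf{B}_4 \oplus \mathbf{B}_2$ are non-trivial. Because this algebra is finite, its ultrapowers are isomorphic copies of itself, so $\mathbb{Q}(\mathbf{B}_4 \oplus \mathbf{B}_2) = ISP(\mathbf{B}_4 \oplus \mathbf{B}_2)$, and the plan reduces the reverse inclusion to showing that every $\A \in \mathsf{NBCA}$ embeds into a power of $\mathbf{B}_4 \oplus \mathbf{B}_2$: equivalently, that for each $a \neq b$ in $A$ there exists a Bochvar homomorphism $h \colon \A \to \mathbf{B}_4 \oplus \mathbf{B}_2$ with $h(a) \neq h(b)$.

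Fix such a pair $a,b$. By the quasi-identity \ref{BCA:quasi} of Definition~\ref{def: algebre di Bochvar}, up to swapping $a,b$ with their negations I may assume $c := \Jdue a \neq \Jdue b =: d$ in $\A_{i_0}$; since any Bochvar homomorphism commutes with $\Jdue$, separating $c$ and $d$ will suffice. The pivotal observation is the identity
\[
 a_{i \vee j} \;=\; \Jdue(1_i \wedge 1_j) \;=\; \Jdue 1_i \wedge \Jdue 1_j \;=\; a_i \wedge a_j,
\]
where the middle step exploits that $\Jdue$ preserves meets (derivable from axiom~\ref{BCA:17} of Definition~\ref{def: algebre di Bochvar} via De Morgan and $\Jzero \neg \varphi \thickapprox \Jdue \varphi$), and the outer steps use $1_i \wedge 1_j = 1_{i \vee j}$ in a P\l onka sum. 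Consequently, every finite meet $a_{i_1} \wedge \cdots \wedge a_{i_n}$ equals $a_{i_1 \vee \cdots \vee i_n}$ and is non-zero by Corollary~\ref{cor: NBCA no trivial fibers}, so the filter generated in $\A_{i_0}$ by $\{a_i : i \in I\}$ is proper and extends to an ultrafilter $U$. Let $V$ be an ultrafilter containing $c \wedge \neg d$ (without loss of generality non-zero).

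Identify the bottom fiber $\mathbf{B}_4$ of the target with $\mathbf{B}_2 \times \mathbf{B}_2$, placing the distinguished element $a_{\mathbf{B}_4} = (1,0)$, and set $h_{i_0} := (\chi_U, \chi_V) \colon \A_{i_0} \to \mathbf{B}_4$; by design $h_{i_0}(c) \neq h_{i_0}(d)$. Define $\phi \colon I \to \{\ast, \dagger\}$ by $\phi(i) = \ast$ iff $a_i \in V$. The identity $a_{i \vee j} = a_i \wedge a_j$ is precisely what makes $\phi$ a join-semilattice homomorphism fixing the bottom ($a_{i_0} = 1 \in V$ automatically). Since $U$ contains every $a_i$, whenever $\phi(i) = \ast$ one has $a_i \in U \cap V$, so $h_{i_0}(\neg a_i) = 0$ and $\Ker(p_{i_0 i}) \subseteq \Ker(h_{i_0})$; via the isomorphism $\A_i \cong \A_{i_0}/\Ker(p_{i_0 i})$ of Remark~\ref{rem: J2 e quoziente di A0}, $h_{i_0}$ then descends to $h_i \colon \A_i \to \mathbf{B}_4$. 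For $\phi(i) = \dagger$ the analogous argument applied to $q_{\ast \dagger} \circ h_{i_0}$, using only $a_i \in U$, yields $h_i \colon \A_i \to \mathbf{B}_2$. Assembling the $h_i$'s along the P\l onka structure (the square over each $i \leq j$ commutes by the definition of $\phi$) produces the required $h$, which separates $c$ from $d$ and hence $a$ from $b$.

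The step I anticipate being most delicate is the twofold verification that $\phi$ is a well-defined semilattice homomorphism and that the kernel inclusions for descending $h_{i_0}$ (or $q_{\ast \dagger} \circ h_{i_0}$) to each fiber hold uniformly: both hinge on $a_{i \vee j} = a_i \wedge a_j$. Without this identity the set $\{i : a_i \in V\}$ need not be closed under joins, and the amalgamation of the $h_i$'s along the $p_{ij}$ would break down. Once this core identity is in place, collecting the separating homomorphisms as $a,b$ range over distinct pairs yields an embedding $\A \hookrightarrow (\mathbf{B}_4 \oplus \mathbf{B}_2)^{\kappa}$, and hence $\mathsf{NBCA} \subseteq ISP(\mathbf{B}_4 \oplus \mathbf{B}_2) = \mathbb{Q}(\mathbf{B}_4 \oplus \mathbf{B}_2)$, completing the proof.
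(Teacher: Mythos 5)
Your proposal is correct, but it takes a genuinely different route from the paper. The paper's proof is much shorter: it observes that $\mathbf{B}_{4}\oplus\mathbf{B}_{2}\cong\WKt\times\mathbf{B}_{2}$, so a separating family of homomorphisms $\A\to\mathbf{B}_{4}\oplus\mathbf{B}_{2}$ is obtained by pairing the separating family $H\subseteq Hom(\A,\WKt)$ (which exists for free because $\WKt$ generates $\class{BCA}\supseteq\class{NBCA}$) with the single retraction $r\colon\A\to\mathbf{B}_{2}$ supplied by Lemma~\ref{lem: ret e NBCA}. You instead build the separating homomorphisms from scratch: two ultrafilters on $\A_{i_{0}}$, the identity $a_{i\vee j}=a_{i}\wedge a_{j}$ (which is correct, via $\Jdue(\varphi\wedge\psi)\thickapprox\Jdue\varphi\wedge\Jdue\psi$ and $1_{i}\wedge 1_{j}=1_{i\vee j}$), and a fiberwise descent along the kernels of the $p_{i_{0}i}$. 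What the paper's argument buys is brevity and reuse of Lemma~\ref{lem: ret e NBCA}; what yours buys is self-containedness and a transparent localization of where ``no trivial fibers'' is used (namely $a_{i}\neq 0$, so the filter generated by $\{a_{i}\}_{i\in I}$ is proper) --- indeed your $\chi_{U}$-coordinate is essentially a re-derivation of the retraction of Lemma~\ref{lem: ret e NBCA}, and the whole construction, read through the isomorphism with $\WKt\times\mathbf{B}_{2}$, recovers the paper's pairs $\langle h,r\rangle$ concretely. One step you should still write out: compatibility of the assembled $h$ with $\Jdue$ does not follow from $h$ being a homomorphism of the $\class{IBSL}$-reducts, since $\Jdue$ is not computed P\l onka-style; it needs the explicit check that for $x\in A_{i}$ with $\phi(i)=\dagger$ one has $h_{i_{0}}(\Jdue x)\in[0,(1,0)]$ (which holds because $\Jdue x\leq a_{i}\notin V$) and $q_{\ast\dagger}(h_{i_{0}}(\Jdue x))=h_{i}(x)$ (which is the descent equation). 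Your construction is designed so that this works, but the verification is a genuine part of the proof rather than an automatic consequence of the commuting squares.
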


\begin{proof}
 We show that $\mathsf{NBCA}=ISP(\mathbf{B}_{4}\oplus\mathbf{B}_{2})$. The right to left inclusion is obvious, as subalgebras of direct products of an involutive bisemilattice without trivial fibers preserve the property of lacking trivial fibers.\\
\noindent 
 For the converse, the proof is an adaption of \cite[Thm. 7]{paoli2021extensions}, and we only sketch its main ingredients (leaving the details to the reader). Preliminarily recall that for quasivarieties $\mathsf{K},\mathsf{K}^{\prime}$, $\mathsf{K}\subseteq\mathsf{K}^{\prime}$ if and only if every finitely generated member of $\mathsf{K}$ belongs to $\mathsf{K}^{\prime}$.  Moreover, for algebras $\A,\mathbf{B}$, $\A\in ISP(\mathbf{B})$ if and only if that there exists a family $H\subseteq Hom(\A,\mathbf{B})$ such that $\displaystyle\bigcap_{h\in H}Ker(h)=\Delta^{\A}$.  It is possible to show that, for a finitely generated $\A\in\mathsf{NBCA}$, there exists a family of homomorphisms $H\subseteq Hom(\A,\mathbf{B}_{4}\oplus \mathbf{B}_{2})$ such that $\displaystyle\bigcap_{h\in H}Ker(h)=\Delta^{\A}$. Indeed, $\mathbf{B}_{4}\oplus \mathbf{B}_{2}\cong\WKt\times\mathbf{B}_{2}$ and $\WKt$ generates $\mathsf{BCA}$, so  there exists $H\subseteq Hom(\A,\mathbf{WK})$ such that $\displaystyle\bigcap_{h\in H}Ker(h)=\Delta^{\A}$. Moreover, by Lemma \ref{lem: ret e NBCA}, $\mathbf{B}_{2}$ is a retract of $\A$, so there exists a retraction $r\colon\A\to\mathbf{B}_{2}$. Now, the family $H\times\{r\}$ is a family of homomorphisms from $\A$ to $\mathbf{B}_{4}\oplus \mathbf{B}_{2}$, defined for each $h\in H$ and $a\in A$ by $a\mapsto \langle h(a),g(a)\rangle$. It is easy to check that $\displaystyle\bigcap_{h\in H}Ker \langle h,g\rangle=\Delta^{\A}$, so $\A\in ISP(\mathbf{B}_{4}\oplus \mathbf{B}_{2})$, as desired.
\end{proof}

  Upon noticing that every non-trivial $\mathsf{NBCA}$ lacks trivial subalgebras (this amounts to say that $\mathsf{NBCA}$ is \emph{Koll\'ar}), \cite[Cor 7.8]{moraschini2020singly} ensures that $\mathbf{B}_{2}$ is the unique relatively simple member of $\mathsf{NBCA}$, namely it is the unique algebra in the quasivariety whose lattice of relative congruences is a two-element chain. Moreover, $\mathbf{B}_{4}\oplus\mathbf{B}_{2}$ is a relatively subdirectly irreducible member of $\mathsf{NBCA}$ which is therefore not simple. In other words, $\mathsf{NBCA}$ is not relatively semisimple, unlike $\mathsf{BCA}$.

Observe that any Bochvar algebra satisfies the absorption law $\varphi\thickapprox \varphi\land(\varphi\lor \psi)$ if and only if its involutive bisemilattice reduct is a Boolean algebra. We call $\mathsf{JBA}$ the quasivariety axiomatized by adding the absorption law to the quasi-equational theory of $\mathsf{BCA}$. It is immediate to verify that $\mathsf{JBA}$ and $\mathsf{BA}$ are term equivalent by interpreting the operation $J_{_2}$ as the identity map. The next theorem characterizes the structure of the lattice of non-trivial subquasivarities of $\mathsf{BCA}$, proving that it consists of the following three-elements chain.

\[
 \begin{tikzcd}[arrows = {dash}]
& \mathsf{BCA}&  \\
 & &    \\
 & \mathsf{NBCA}\arrow[uu]&  \\
  & &  \\
   &\mathsf{JBA}\arrow[uu] & 
  \end{tikzcd}
 \]
\medskip

\begin{theorem}\label{thm: reticolo sottoquasivar}
The only non-trivial subquasivarieties of $\mathsf{BCA}$ are $\mathsf{NBCA}$ and $\mathsf{JBA}$. They form a three element chain $\mathsf{JBA}\subset \mathsf{NBCA}\subset\mathsf{BCA}$.
\end{theorem}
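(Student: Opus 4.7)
I would first dispose of the strict chain $\mathsf{JBA}\subsetneq\mathsf{NBCA}\subsetneq\mathsf{BCA}$: every Boolean algebra has a single, non-trivial fiber in its P\l onka decomposition (hence sits in $\mathsf{NBCA}$), while $\mathbf{B}_{4}\oplus\mathbf{B}_{2}$ fails absorption and $\WKt$ contains a fixpoint, so falsifies $(NF)$ by Corollary \ref{cor: NBCA no trivial fibers}. The real content is the \emph{maximality} statement: any non-trivial subquasivariety $\mathsf{K}$ of $\mathsf{BCA}$ equals one of $\mathsf{JBA}$, $\mathsf{NBCA}$, $\mathsf{BCA}$. I would proceed by cases, and in each case the tactic is to exhibit one of the generating algebras ($\WKt$, $\mathbf{B}_{4}\oplus\mathbf{B}_{2}$ or $\mathbf{B}_{2}$) as a subalgebra of some element of $\mathsf{K}$ and then appeal to the generation results (Theorem \ref{thm: nbca generated by 4+2} and the fact that $\mathsf{BCA}=ISP(\WKt)$).

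If $\mathsf{K}\not\subseteq\mathsf{NBCA}$, Corollary \ref{cor: NBCA no trivial fibers} supplies some $\A\in\mathsf{K}$ whose P\l onka decomposition contains a trivial fiber $\{c\}$ (so $c=\neg c$). I would show that $\{0,1,c\}$ is the universe of a subalgebra of $\A$ isomorphic to $\WKt$: closure under $\vee,\wedge,\neg$ is automatic from the P\l onka sum structure (operations between distinct fibers land in the higher, trivial fiber); $J_{_1}c=1$ by Lemma \ref{lemma: Juno}(3); and the element $a_{i}=J_{_2}1_{i}\in A_{i_{0}}$ associated with the trivial fiber must be $0$ (since $[0,a_{i}]\cong\{c\}$ is a trivial Boolean algebra), giving $J_{_2}c=0$ and $J_{_0}c=J_{_2}\neg c=0$. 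Hence $\WKt\in\mathsf{K}$ and $\mathsf{K}\supseteq ISP(\WKt)=\mathsf{BCA}$, so $\mathsf{K}=\mathsf{BCA}$.

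If $\mathsf{K}\subseteq\mathsf{NBCA}$ and some $\A\in\mathsf{K}$ fails absorption, its P\l onka decomposition has some index $i>i_{0}$ with $\A_{i_{0}}$ and $\A_{i}$ both non-trivial Boolean algebras. Set $a:=J_{_2}1_{i}\in A_{i_{0}}$; by Proposition \ref{prop: immagine tramite J2}, $p_{i_{0}i}\colon\mathbf{[0,a]}\to\A_{i}$ is an isomorphism, and the surjectivity and non-injectivity of $p_{i_{0}i}$ force $0<a<1$. I claim that $B:=\{0,a,\neg a,1\}\cup\{0_{i},1_{i}\}$ is the universe of a subalgebra of $\A$ isomorphic to $\mathbf{B}_{4}\oplus\mathbf{B}_{2}$: bisemilattice closure follows from $p_{i_{0}i}$ restricting to a surjection between the two selected Boolean subalgebras (sending $a\mapsto 1_{i}$, $\neg a\mapsto 0_{i}$), and closure under $J_{_2},J_{_0},J_{_1}$ follows from Remark \ref{rem: Jdue inversa a destra} (so $J_{_2}0_{i}=0$ and $J_{_2}1_{i}=a$) together with Lemma \ref{lemma: Juno} (yielding $J_{_1}0_{i}=J_{_1}1_{i}=\neg a$, since $J_{_1}$ is constant on each fiber and $p_{i_{0}i}(J_{_1}1_{i})=1_{i}\wedge\neg 1_{i}=0_{i}$). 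Then $\mathbf{B}_{4}\oplus\mathbf{B}_{2}\in\mathsf{K}$ and Theorem \ref{thm: nbca generated by 4+2} yields $\mathsf{NBCA}\subseteq\mathsf{K}$, hence $\mathsf{K}=\mathsf{NBCA}$.

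Finally, if $\mathsf{K}\subseteq\mathsf{NBCA}$ and every $\A\in\mathsf{K}$ satisfies absorption, then $\mathsf{K}\subseteq\mathsf{JBA}$. Since $\mathsf{JBA}$ is term-equivalent to the variety of Boolean algebras, which is generated as a quasivariety by $\mathbf{B}_{2}$, and since $\mathbf{B}_{2}$ embeds into every non-trivial Boolean algebra, any non-trivial subquasivariety of $\mathsf{JBA}$ contains $\mathbf{B}_{2}$ and hence equals $\mathsf{JBA}$; so $\mathsf{K}=\mathsf{JBA}$. The main obstacle throughout is the closure verification in the middle case: getting the $J$-operations right requires the full structural input from Section \ref{sec: 3}, specifically the interval characterization of the $\A_{i}$'s inside $\A_{i_{0}}$ via $J_{_2}$ and the cross-fiber constancy of $J_{_1}$.
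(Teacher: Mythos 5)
Your proof is correct and follows essentially the same route as the paper's: in each of the two non-trivial cases you exhibit exactly the same subalgebra the paper uses ($\WKt$ on $\{0,1,c\}$ when a trivial fiber is present, and $\mathbf{B}_{4}\oplus\mathbf{B}_{2}$ on $\{0,J_{_2}1_{i},\neg J_{_2}1_{i},1,0_{i},1_{i}\}$ otherwise) and then invoke the same generation results $\mathsf{BCA}=ISP(\WKt)$ and Theorem \ref{thm: nbca generated by 4+2}. You give more detail than the paper on closure under the $J$-operations (note that pinning down $J_{_1}1_{i}=\neg J_{_2}1_{i}$ is cleanest via $J_{_1}1_{i}=\neg(J_{_2}1_{i}\vee J_{_0}1_{i})$ and $J_{_0}1_{i}=J_{_2}0_{i}=0$, since cross-fiber constancy plus $p_{i_{0}i}(J_{_1}1_{i})=0_{i}$ alone only locates $J_{_1}1_{i}$ up to the kernel class), and you make explicit the minimality of $\mathsf{JBA}$, which the paper leaves implicit.
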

\begin{proof}
We already proved $\mathsf{JBA}\subset\mathsf{NBCA}\subset\mathsf{BCA}$. We only have to show that these are the only non-trivial ones. Suppose that $\mathsf{K}\subseteq\mathsf{BCA}$ and $\mathsf{K}\nsubseteq\mathsf{NBCA}$. Therefore there exists $\A\in\mathsf{K}$ and $\A\notin\mathsf{NBCA}$. This entails $\A\in\mathsf{BCA}$ and $\A$ has a unique trivial fiber with universe $\{a\}$. Clearly $g\colon\WKt\to\A$ mapping $1^{\WKt}\to1^{\A}, 0^{\WKt}\to 0^{\A}, 1/2\to a$ is an embedding. Therefore $\WKt\in S(\A)$, whence $\WKt\in\mathsf{K}$. Since $\WKt$ generates $\mathsf{BCA}$, $\mathsf{K}=\mathsf{BCA}$. 

Suppose now $\mathsf{K}\subseteq\mathsf{NBCA}$ and $\mathsf{K}\nsubseteq\mathsf{JBA}$ and let $\A\in\mathsf{K},\A\notin\mathsf{JBA}$. This entails that the P\l onka sum decompotition of $\A$ has at least two fibers $\A_{i_{0}}, \A_{i}$ $(i_{0}<i)$ and no fiber is trivial. Moreover, by Lemma \ref{lemma: omomorfismi sono suriettivi}, $\A_{i_{0}}$ has cardinality $\geq 4$ (for otherwise $\A_{i}$ would be trivial). Let $h\colon\mathbf{B}_{4}\oplus\mathbf{B}_{2}\to \A$ mapping $1^{\mathbf{B}_{4}\oplus\mathbf{B}_{2}}\to 1^{\A}, 0^{\mathbf{B}_{4}\oplus\mathbf{B}_{2}}\to 0^{\A}$, $1^{\mathbf{B}_{2}}\to 1_{i}, 0^{\mathbf{B}_{2}}\to 0_{i}$,  $a\to J_{_2}(1_{i}), \neg a\to \neg J_{_2}1_{i}$. Clearly $h$ is an embedding, so $\mathbf{B}_{4}\oplus\mathbf{B}_{2}\in S(\A)$, which entails $\mathsf{K}=\mathsf{NBCA}$.
\end{proof}

\begin{corollary}\label{NBCA SC}
The quasivariety $\mathsf{NBCA}$ is structurally complete.
 
\end{corollary}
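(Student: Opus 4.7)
The plan is to unpack the definition of structural completeness (Definition \ref{def: SC}) and apply the classification of subquasivarieties proven in Theorem \ref{thm: reticolo sottoquasivar}. Concretely, to establish that $\mathsf{NBCA}$ is SC, I need to check, for every proper subquasivariety $\mathsf{K}' \subsetneq \mathsf{NBCA}$, that $\mathbb{V}(\mathsf{K}') \subsetneq \mathbb{V}(\mathsf{NBCA})$.

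By Theorem \ref{thm: reticolo sottoquasivar}, the only proper subquasivarieties of $\mathsf{NBCA}$ are the trivial quasivariety and $\mathsf{JBA}$. The trivial case is immediate, since $\mathbb{V}$ of the trivial quasivariety contains only trivial algebras, while $\mathbb{V}(\mathsf{NBCA})$ contains $\mathbf{B}_{2}$. So the substantive case is $\mathsf{K}' = \mathsf{JBA}$.

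For this case, I would first observe that $\mathsf{JBA}$ is term-equivalent to the variety $\mathsf{BA}$ of Boolean algebras (as noted right before Theorem \ref{thm: reticolo sottoquasivar}), so $\mathsf{JBA}$ is already a variety and hence $\mathbb{V}(\mathsf{JBA}) = \mathsf{JBA}$. It then suffices to exhibit a member of $\mathsf{NBCA}$ that does not lie in $\mathsf{JBA}$: the algebra $\mathbf{B}_{4} \oplus \mathbf{B}_{2}$ works, since its $\mathsf{IBSL}$-reduct (a genuine P\l onka sum of two non-trivial Boolean algebras) fails the absorption law $\varphi \thickapprox \varphi \wedge (\varphi \vee \psi)$. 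Hence $\mathbf{B}_{4} \oplus \mathbf{B}_{2} \in \mathsf{NBCA} \subseteq \mathbb{V}(\mathsf{NBCA})$ but $\mathbf{B}_{4} \oplus \mathbf{B}_{2} \notin \mathsf{JBA} = \mathbb{V}(\mathsf{JBA})$, giving the desired strict inclusion.

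There is no real obstacle here: the result is an immediate corollary of the lattice description in Theorem \ref{thm: reticolo sottoquasivar} together with the fact that $\mathsf{JBA}$ is actually a variety. The only thing to be a little careful about is that SC is stated in terms of generated varieties (not generated quasivarieties), so one must use that $\mathsf{JBA}$ coincides with $\mathbb{V}(\mathsf{JBA})$ rather than merely appealing to $\mathsf{JBA} \subsetneq \mathsf{NBCA}$.
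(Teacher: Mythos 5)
Your proof is correct and follows essentially the same route as the paper: both arguments reduce the claim to the classification of subquasivarieties in Theorem \ref{thm: reticolo sottoquasivar} and the observation that $\mathsf{JBA}$ is itself a variety, so that $\mathbb{V}(\mathsf{JBA})=\mathsf{JBA}\subsetneq\mathsf{NBCA}\subseteq\mathbb{V}(\mathsf{NBCA})$. Your explicit witness $\mathbf{B}_{4}\oplus\mathbf{B}_{2}$ and the treatment of the trivial subquasivariety just make explicit what the paper leaves implicit.
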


\begin{proof}
 The only proper subquasivariety  of $\mathsf{NBCA}$ is the variety $\mathsf{JBA}$, so $\mathbb{V}(\mathsf{JBA})=\mathsf{JBA}\subsetneq \mathsf{NBCA}\subsetneq\mathbb{V}(\mathsf{NBCA})$.
\end{proof}
 
 Moreover, from the fact that $\mathsf{BCA}$ is not SC we can infer the following.
 
  \begin{corollary}\label{NBCA BCA generates same var}
 $\mathbb{V}(\mathsf{NBCA})=\mathbb{V}(\mathsf{BCA})$.  
\end{corollary}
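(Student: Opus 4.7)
The inclusion $\mathbb{V}(\mathsf{NBCA})\subseteq\mathbb{V}(\mathsf{BCA})$ is immediate from $\mathsf{NBCA}\subseteq\mathsf{BCA}$, so the work lies in showing the reverse inclusion. The plan is to exploit the hint flagged in the corollary's statement, namely the failure of structural completeness for $\mathsf{BCA}$. By Proposition \ref{prop: PSC and SC BCA}, $\mathsf{BCA}$ is not PSC, and since PSC is implied by SC this gives that $\mathsf{BCA}$ is not SC. Unwinding Definition \ref{def: SC}, this means there exists some proper subquasivariety $\mathsf{K}'\subsetneq\mathsf{BCA}$ such that $\mathbb{V}(\mathsf{K}')=\mathbb{V}(\mathsf{BCA})$ (the implication in the definition can fail only in this way, since $\mathsf{K}'\subseteq\mathsf{K}$ automatically yields $\mathbb{V}(\mathsf{K}')\subseteq\mathbb{V}(\mathsf{K})$).

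The next step is to pin down which subquasivariety plays the role of $\mathsf{K}'$. Since $\mathbb{V}(\mathsf{BCA})$ is non-trivial, $\mathsf{K}'$ must itself be non-trivial, and Theorem \ref{thm: reticolo sottoquasivar} tells us that the only non-trivial proper subquasivarieties of $\mathsf{BCA}$ are $\mathsf{JBA}$ and $\mathsf{NBCA}$. The case $\mathsf{K}'=\mathsf{JBA}$ is immediately excluded: $\mathsf{JBA}$ is already a variety (term-equivalent to $\mathsf{BA}$), so $\mathbb{V}(\mathsf{JBA})=\mathsf{JBA}\subsetneq\mathsf{BCA}$; if this coincided with $\mathbb{V}(\mathsf{BCA})$, then $\mathsf{BCA}\subseteq\mathbb{V}(\mathsf{BCA})=\mathsf{JBA}$, contradicting $\mathsf{JBA}\subsetneq\mathsf{BCA}$. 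Consequently the only possibility is $\mathsf{K}'=\mathsf{NBCA}$, which yields $\mathbb{V}(\mathsf{NBCA})=\mathbb{V}(\mathsf{BCA})$.

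For a more direct verification independent of the SC lemma, one can alternatively argue from the isomorphism $\mathbf{B}_4\oplus\mathbf{B}_2\cong\WKt\times\mathbf{B}_2$ used in the proof of Theorem \ref{thm: nbca generated by 4+2}. Since $\mathbf{B}_4\oplus\mathbf{B}_2\in\mathsf{NBCA}$, the projection onto the first factor shows $\WKt\in H(\mathsf{NBCA})\subseteq\mathbb{V}(\mathsf{NBCA})$; as $\mathsf{BCA}=ISP(\WKt)$, this gives $\mathbb{V}(\mathsf{BCA})=HSP(\WKt)\subseteq\mathbb{V}(\mathsf{NBCA})$. There is no serious obstacle here: the statement is essentially a bookkeeping consequence of the classification of subquasivarieties and the non-(P)SC status of $\mathsf{BCA}$ already established.
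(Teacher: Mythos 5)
Your main argument is correct and is precisely the paper's intended derivation: the corollary is stated as a consequence of $\mathsf{BCA}$ not being SC, which, unwound via Definition \ref{def: SC} and combined with the classification in Theorem \ref{thm: reticolo sottoquasivar} and the observation that $\mathbb{V}(\mathsf{JBA})=\mathsf{JBA}$ is too small, forces $\mathbb{V}(\mathsf{NBCA})=\mathbb{V}(\mathsf{BCA})$. Your alternative direct argument is also sound and arguably more self-contained: since $\mathbf{B}_4\oplus\mathbf{B}_2\cong\WKt\times\mathbf{B}_2$ lies in $\mathsf{NBCA}$, projecting onto the first factor puts $\WKt$ in $H(\mathsf{NBCA})$, whence $\mathbb{V}(\mathsf{BCA})=HSP(\WKt)\subseteq\mathbb{V}(\mathsf{NBCA})$.
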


 Let now switch our attention to the logical setting, relying on the bridge results connecting an algebraizable logic (and its extensions) with its algebraic counterpart(s). Let $\mathsf{N}\B$ be the logic obtained by adding to $ \B$ the rule
  \begin{equation*}
J_{_1}\varphi\vdash \psi \tag{$EFJ$}.
 \end{equation*} 
 This logic is a proper extension of $\B$, as $(EFJ)$ is the logical pre-image of $(NF)$ via the transformer formula-equations transformer $\tau$ and $(NF)$ is not valid in  $\B$ (a counterexample is easily found in $\WKt$). In the light of Theorem \ref{thm: reticolo sottoquasivar} we obtain the following.
 
 \begin{corollary}\label{cor: estensioni Be}
 $\NBe$ is complete with respect to the matrix $\langle\mathbf{B}_{4}\oplus\mathbf{B}_{2},\{1\}\rangle$. Moreover, the only non-trivial extensions of $\B$ are $\NBe$ and $\mathsf{CL}$.
 
\end{corollary}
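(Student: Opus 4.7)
The plan is to combine the algebraizability of $\B$ with the structural results just established. Since $\B$ is algebraizable with $\mathsf{BCA}$ as its equivalent algebraic semantics, the lattice of axiomatic extensions of $\B$ and the lattice of subquasivarieties of $\mathsf{BCA}$ are dually isomorphic. By Theorem \ref{thm: reticolo sottoquasivar}, the latter is the three-element chain $\mathsf{JBA}\subsetneq \mathsf{NBCA}\subsetneq \mathsf{BCA}$, so $\B$ has exactly three non-trivial extensions. What remains is to identify which logic corresponds to each quasivariety.

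First, $\mathsf{BCA}$ trivially matches $\B$ itself. For $\mathsf{NBCA}$, I would appeal to Corollary \ref{cor: NBCA no trivial fibers}, by which $\mathsf{NBCA}$ is axiomatized over $\mathsf{BCA}$ by the quasi-identity $(NF)$: since $(EFJ)$ is the logical preimage of $(NF)$ under the transformer $\rho$ witnessing algebraizability, the extension of $\B$ corresponding to $\mathsf{NBCA}$ is precisely $\NBe$. For $\mathsf{JBA}$, I would exploit its term equivalence with the variety $\mathsf{BA}$ of Boolean algebras, obtained by interpreting $J_{_2}$ as the identity (and recalling that $J_{_0}$ and $J_{_1}$ are definable from the remaining operations in $\mathsf{BCA}$): since $\mathsf{BA}$ equivalently algebraizes classical propositional logic, the extension of $\B$ associated with $\mathsf{JBA}$ is $\mathsf{CL}$ (up to the same term equivalence). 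This yields the second assertion of the corollary.

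For the completeness statement, I would invoke Theorem \ref{thm: nbca generated by 4+2}, which establishes $\mathsf{NBCA}=ISP(\mathbf{B}_{4}\oplus\mathbf{B}_{2})$. Combining this identity with the algebraizability of $\NBe$ via $\mathsf{NBCA}$ and the general bridge result \cite[Theorem 3.2.2]{Cz01} (the same one used earlier to infer that $\mathsf{BCA}$ is generated by $\WKt$), one concludes that $\NBe$ is complete with respect to the matrix $\langle\mathbf{B}_{4}\oplus\mathbf{B}_{2},\{1\}\rangle$.

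The main obstacle, I expect, is the careful identification of $\mathsf{JBA}$ with $\mathsf{CL}$: while the term equivalence with $\mathsf{BA}$ is essentially routine, one should verify that the induced consequence relation on the external language agrees with classical logic under the translation, with no subtlety introduced by the additional constants or by the definable $J$-operators. Once this bookkeeping is in place, the rest of the argument is a purely formal consequence of the already-established dual isomorphism and of Theorem \ref{thm: nbca generated by 4+2}.
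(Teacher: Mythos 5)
Your proposal is correct and follows essentially the same route the paper intends: the dual isomorphism between the lattice of extensions of $\B$ and the lattice of subquasivarieties of $\mathsf{BCA}$, combined with Theorem \ref{thm: reticolo sottoquasivar} to count the extensions, the correspondence of $(EFJ)$ with $(NF)$ to identify $\NBe$ with $\mathsf{NBCA}$, the term equivalence of $\mathsf{JBA}$ with Boolean algebras to identify the third extension with $\mathsf{CL}$, and Theorem \ref{thm: nbca generated by 4+2} plus algebraizability for completeness with respect to $\langle\mathbf{B}_{4}\oplus\mathbf{B}_{2},\{1\}\rangle$. One small correction: the dual isomorphism you need is with the lattice of \emph{all} (finitary) extensions, not the \emph{axiomatic} ones as you wrote --- axiomatic extensions correspond only to relative subvarieties, and $\NBe$ is obtained by adding a proper rule, so it is not an axiomatic extension; your argument otherwise uses the correct correspondence.
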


In a logical perspective, the notions of PSC and SC have been investigated in several contribuitions, such as \cite{bergman1988structural,raftery2016admissible,rybakov1997admissibility,wronski2009overflow}. For a logic $\vdash$, being SC amounts to the fact that each admissible rule is derivable in $\vdash$. In other words, $\vdash$ is SC if for every rule (R) of the form $\langle\Gamma,\psi\rangle$:

\[(\vdash\varphi\iff\vdash_{R}\varphi)\Rightarrow \Gamma\vdash\psi\]

where $\vdash_{R}$ is the extension of $\vdash$ obtained by adding (R) to $\vdash$. Clearly, the converse implication in the above display is always true.
A \emph{passive} rule is of the form $\langle\Gamma, y\rangle$, where no member of $\Gamma$ contains occurrences of $y$, namely $y\notin\Var(\Gamma)$. Accordingly, we say that a logic  $\vdash$ is PSC if every passive, admissible rule is derivable.\footnote{In the context of finitary algebraizable logics, the fact that the logical definition of PSC is the ``right''  translation of the algebraic one can be inferred by comparing \cite[Cor.3.3]{moraschini2020singly} and \cite[Thm. 7.5]{raftery2016admissible}.}

The following corollary emphasizes  the logical meaning of the previous results on the subqusivarieties of $\mathsf{BCA}$.
\begin{corollary}\label{rem: SC delle logiche}
$\B$ is not PSC, while $\NBe$ is SC. 
\end{corollary}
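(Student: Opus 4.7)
The plan is to reduce both claims to the algebraic statements already established earlier in the paper, via the bridge between logical and algebraic (passive) structural completeness for finitary algebraizable logics (the translation referenced in the footnote). First I would observe that $\B$ is finitary and algebraizable with $\mathsf{BCA}$, and that $\NBe$ --- obtained from $\B$ by adding the rule $(EFJ)$ --- is finitary and algebraizable with $\mathsf{NBCA}$ (the latter being the equational counterpart of $(EFJ)$, which gives $(NF)$). Then the first claim follows from Proposition \ref{prop: PSC and SC BCA}-(i), and the second from Corollary \ref{NBCA SC}.

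For the first claim, a direct self-contained argument is also informative: exhibit $J_{_1} x \vdash y$ (with $y$ distinct from $x$) as a passive rule that is admissible in $\B$ but not derivable. Passivity is immediate since $y$ does not occur in the premise. Admissibility follows vacuously: for any substitution $\sigma$, the formula $J_{_1}\sigma(x)$ cannot be a theorem of $\B$, because its validity in $\WKt$ would require $\sigma(x)$ to take value $\ant$ under every homomorphism $\Fm\to\WKt$, which is impossible (one may always assign every variable to a classical value, and the classical values form a Boolean subalgebra). Non-derivability is then witnessed by the $\WKt$-valuation $x\mapsto \ant$, $y\mapsto 0$, under which $J_{_1}x$ takes value $1$ while $y$ takes value $0$.

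For the second claim, an alternative self-contained argument proceeds via the dual isomorphism between extensions of $\B$ and subquasivarieties of $\mathsf{BCA}$, combined with Theorem \ref{thm: reticolo sottoquasivar}: the unique proper non-trivial extension of $\NBe$ corresponds to $\mathsf{JBA}$, and since $\mathbb{V}(\mathsf{JBA})=\mathsf{JBA}\subsetneq \mathsf{NBCA}=\mathbb{V}(\mathsf{NBCA})$, every admissible rule of $\NBe$ must already be derivable in $\NBe$ (otherwise it would carve out a proper subquasivariety of $\mathsf{NBCA}$ still generating $\mathbb{V}(\mathsf{NBCA})$, contradicting Corollary \ref{NBCA SC}). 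No substantial obstacle is anticipated; the result is essentially a packaged application of the bridge theorems, and the only care needed is in writing out the passive-rule witness and checking that its admissibility is genuinely vacuous.
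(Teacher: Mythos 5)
Your proposal is correct, and it overlaps substantially with the paper's proof while differing in two details worth noting. For the failure of PSC, the paper uses exactly your witness, the rule $(EFJ)\colon \Juno x\vdash y$; it establishes admissibility by showing that $\NBe$ and $\B$ have the same theorems via Corollary \ref{NBCA BCA generates same var} ($\mathbb{V}(\mathsf{NBCA})=\mathbb{V}(\mathsf{BCA})$), whereas you argue vacuously that $\Juno\sigma(x)$ is never a theorem because $\{0,1\}$ is a Boolean subuniverse of $\WKt$. Both work; your route is more elementary and self-contained, the paper's reuses the algebraic machinery already in place and in fact proves the stronger ``same theorems'' formulation directly. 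For SC of $\NBe$, the paper argues inside the logic: the only proper non-trivial extension is $\mathsf{CL}$, which proves the new theorem $\varphi\lor\neg\varphi$, so no admissible rule can be non-derivable; you instead invoke Corollary \ref{NBCA SC} through the SC bridge for finitary algebraizable logics, which is a legitimate (and standard) packaging of the same fact. One small correction: you write $\mathsf{NBCA}=\mathbb{V}(\mathsf{NBCA})$, which is false --- $\mathsf{NBCA}$ is a proper quasivariety, and the proof of Corollary \ref{NBCA SC} explicitly records $\mathsf{NBCA}\subsetneq\mathbb{V}(\mathsf{NBCA})$. The inclusion you actually need, $\mathbb{V}(\mathsf{JBA})=\mathsf{JBA}\subsetneq\mathbb{V}(\mathsf{NBCA})$, still holds (e.g.\ $\mathbf{B}_4\oplus\mathbf{B}_2$ fails absorption), so the slip does not damage the argument, but it should be fixed.
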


\begin{proof}
In order to prove the first statement we show $(EFJ)$ is passive, admissible and non derivable in $\B$. That  $(EFJ)$ is passive and non derivable in $\B$ is clear. Let now $\varphi$ be a theorem of $\NBe$, and remind it is the logic obtained adding $(EFJ)$ to $\B$. Suppose $\varphi$ is not a theorem of $\B$. Then $\mathsf{BCA}\not\vDash\varphi\thickapprox 1$, and $\mathsf{NBCA}\vDash\varphi\thickapprox 1$. However, this contradicts Corollary \ref{NBCA BCA generates same var}. So, $\B$ is not PSC. 

That  $\NBe$ is SC follows straightforwardly upon noticing it has $\mathsf{CL}$ as unique proper (non-trivial) extension, and that $\varphi\lor\neg \varphi$ is not a theorem of $\NBe$.
\end{proof}

\section{Amalgamation in quasivarieties of Bochvar algebras}\label{sec: bridge properties}

In the context of algebraizable logics, several logical properties can be established by means of the so-called bridge theorems, whose general form is
\[
\text{a logic } \logic{L} \text{ has the property } P\iff \class{K} \text{ has the property } Q, 
\] 
where the quasivariety $\class{K}$ is the equivalent algebraic semantics of $\logic{L}$. A valid instance of the above equivalence can be obtained by replacing $P$ with ``Deduction theorem'' and $Q$ with ``Equationally definable principal relative congruences'' (see \cite[Thm.Q.9.3]{Cz01}).

In the light of the results of Section \ref{sec: sottoquasi e estensioni}, $\mathsf{BCA}$ and $\mathsf{NBCA}$  are the only interesting quasivarieties of Bochvar algebras. In this section we show that they enjoy the amalgamation property (AP) or, equivalently, that their associated logics enjoy the Craig interpolation property.
The strategy for proving (AP) for $\mathsf{BCA}$ consists  in providing a sufficient condition implying (AP), established in \cite[Theorem 9]{MetMonTsi} for varieties, and that naturally extends to quasivarieties (Theorem \ref{th: amalgamation for Q}).

Recall that a \emph{V-formation} (see Figure \ref{amalgamo}) is a $5$-tuple $\left(
\mathbf{A},\mathbf{B},\C,i,j\right)  $ such that
$\mathbf{A,B,C}$ are similar algebras, and
$i\colon\mathbf{A\rightarrow B},j\colon\mathbf{A\rightarrow C}$ are embeddings.
A class $\mathsf{K}$ of similar algebras is said to have the
\emph{amalgamation property} if for every V-formation with
$\mathbf{A},\mathbf{B},\C\in\mathsf{K}$ there
exists an algebra $\mathbf{D}\in\mathsf{K}$ and embeddings $h\colon\mathbf{B}%
\mathbf{\rightarrow D},k\colon\C\mathbf{\rightarrow D}$ such that
$k\circ j=h\circ i$. In such a case, we also say that $(\D, h, k)$ is an \emph{amalgam} of the V-formation $\left(  \mathbf{A},\mathbf{B}%
,\C,i,j\right)  $. 

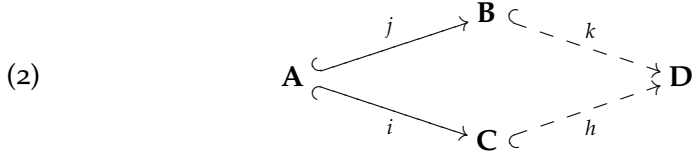
\begin{figure}[h]
\begin{equation}
\vcenter{\xymatrix@R10pt{
                      & & \mathbf{B} \; \ar @{^(-->} [rrd]  ^ {k} & &       \\
 \A \; \ar @{^(->} [rru] ^{j} \ar @{_(->} [rrd] _ {i}& &       & & \D  \\
                      & & \mathbf{C} \;  \ar @{_(-->} [rru] _{h} & &     
}}
\end{equation}
 \caption{A generic amalgamation schema}\label{amalgamo}
\end{figure}

The following lemma is originally due to Gr\"atzer \cite{Gratzer} (it can be also found in \cite{MetMonTsi}), while the subsequent theorem is the obvious adaptation to quasivarieties of a theorem by Metcalfe, Montagna and Tsinakis \cite[Theorem 9]{MetMonTsi}. We insert the proofs for the completeness of the exposition.

\begin{lemma}[\cite{Gratzer}]\label{lemma: amalgamation Q}
Let $\mathsf{Q}$ be a quasivariety. The following are equivalent:
\begin{enumerate}
\item $\mathsf{Q}$ has (AP); 
\item for every $V$-formation $(\A,\mathbf{B},\C, i,j)$ and elements $x\neq y\in B$ ($x\neq y\in C$, respectiely) there exists $\D_{xy}\in \mathsf{Q}$ and homomorphisms $h_{xy}\colon \mathbf{B}\to\D_{xy}$ and $k_{xy}\colon\C\to\D_{xy}$ such that $h_{xy}(x)\neq h_{xy}(y)$ ($k_{xy}(x)\neq k_{xy}(y)$, respectively) and $h\circ i = k\circ j$.
\end{enumerate}
\end{lemma}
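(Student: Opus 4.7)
The proof will be a straightforward adaptation of the classical Gr\"atzer construction, exploiting only that quasivarieties are closed under arbitrary products (and isomorphic copies). The direction $(1)\Rightarrow(2)$ is essentially free: assuming (AP), any $V$-formation admits an amalgam $(\D,h,k)$ with $h,k$ embeddings; since injective maps separate distinct elements, we may simply set $\D_{xy}=\D$, $h_{xy}=h$, $k_{xy}=k$ for every such pair, and the required inequalities together with $h_{xy}\circ i=k_{xy}\circ j$ hold automatically.

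The heart of the argument is the converse $(2)\Rightarrow(1)$. Fix a $V$-formation $(\A,\mathbf{B},\C,i,j)$. For every pair $x\neq y$ with $x,y\in B$, hypothesis (2) yields an algebra $\D^{B}_{xy}\in\mathsf{Q}$ together with homomorphisms $h^{B}_{xy}\colon\mathbf{B}\to\D^{B}_{xy}$ and $k^{B}_{xy}\colon\C\to\D^{B}_{xy}$ such that $h^{B}_{xy}(x)\neq h^{B}_{xy}(y)$ and $h^{B}_{xy}\circ i=k^{B}_{xy}\circ j$; symmetrically, for every pair $x\neq y$ with $x,y\in C$ we obtain $\D^{C}_{xy}\in\mathsf{Q}$ with homomorphisms $h^{C}_{xy},k^{C}_{xy}$ separating $x,y$ in $\C$ and still making the square commute.

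The plan is then to set
\[
\D\coloneqq \Bigl(\prod_{x\neq y\in B}\D^{B}_{xy}\Bigr)\times\Bigl(\prod_{x\neq y\in C}\D^{C}_{xy}\Bigr),
\]
which belongs to $\mathsf{Q}$ since quasivarieties are closed under $P$. Define $h\colon\mathbf{B}\to\D$ and $k\colon\C\to\D$ componentwise by the $h^{\bullet}_{xy}$'s and $k^{\bullet}_{xy}$'s respectively, using the universal property of the product. Distinct elements of $\mathbf{B}$ are separated by at least one projection of $h$ (namely the one indexed by the pair itself), so $h$ is injective; the same argument gives injectivity of $k$. Commutativity of the square $h\circ i=k\circ j$ follows coordinatewise from the corresponding commutativity of each $h^{\bullet}_{xy}\circ i=k^{\bullet}_{xy}\circ j$. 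Hence $(\D,h,k)$ is an amalgam in $\mathsf{Q}$.

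No step is truly delicate: the only point that requires care is bookkeeping for the two families of separating data (one for $B$ and one for $C$), and verifying that the product retains the commutativity of the original square. Closure of quasivarieties under products does all the heavy lifting, so the proof should be very short.
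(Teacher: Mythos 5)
Your proof is correct and follows essentially the same route as the paper: the forward direction is immediate from the definition of an amalgam, and the converse assembles the separating algebras into a product and uses its universal property to obtain the embeddings. In fact your bookkeeping is slightly more careful than the paper's, which writes the product only over pairs $x\neq y\in B$ and leaves the injectivity of the map out of $\C$ implicit, whereas you explicitly index over both families of pairs so that both $h$ and $k$ are visibly injective.
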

\begin{proof}
$(1)\Rightarrow (2)$ is obvious. \\
\noindent
$(2)\Rightarrow (1)$. Let $(\A,\mathbf{B},\C, i,j)$ be a $V$-formation in $\mathsf{Q}$. Define $\D=\displaystyle\prod_{x\neq y\in B} \D_{xy}$. By assumption, for every $x\neq y\in B$ there exist $h_{xy}\colon \mathbf{B}\to \D_{xy}$ and $k_{xy}\colon\C\to\D_{xy}$ s. t. $h(x)\neq h(y)$ and $h\circ i = k\circ j$. By the universal property of the product, $\D$ and the homomorphisms $h$ and $k$, where $\pi_{xy}\circ h = h_{xy}$ and $\pi_{xy}\circ k = k_{xy}$ (with $\pi\colon \D\to\D_{xy}$ the projection) is the amalgam.
\end{proof}

The following provides a sufficient condition for a quasivariety to have the (AP) and it reduces somehow the search for an amalgam to a subclass of a quasivariety.  As a notational convention, by $Co_{\mathsf{K}}^{\A}$ we denote the lattice of $\class{K}$-congruences on an algebra $\A$, namely the congruences $\theta$ on $\alg{A}$ such that $\alg{A}/\theta\in\class{K}$. Let $\{\theta_{i}\}_{i\in I}$ be a family of $\class{K}$-congruences on an algebra $\alg{A}\in\class{K}$. We say that $\alg{A}$ is subdirectly irreducible relative to $\class{K}$, or just relatively subdirectly irreducible, when $\bigwedge_{i\in I}\theta_{i}=\Delta^{\alg{A}}$ entails $\theta_{i}=\Delta^{\alg{A}}$ for some $i\in I$. Moreover, given a quasivariety $\class{K}$, by  $\mathsf{K}_{RSI}$ we indicate the class of relatively subdirectly irreducible members of $\mathsf{K}$. If $\class{K}$ is a variety, we simply write $\class{K}_{SI}$.


\begin{theorem}[essentially \cite{MetMonTsi}]\label{th: amalgamation for Q}
Let $\mathsf{K}$ be a subclass of a quasivariety $\mathsf{Q}$ satisfying the following properties: 
\begin{enumerate}
\item $\mathsf{Q}_{RSI}\subseteq\mathsf{K}$; 
\item $\mathsf{K}$ is closed under $\mathsf{I}$ and $\mathsf{S}$; 
\item for every algebras $\A,\mathbf{B}\in\mathsf{Q}$ such that $\A\leq\mathbf{B}$ and every $\theta\in Co_{\mathsf{K}}^{\A}$ such that $\A/\theta\in \mathsf{K}$ there exists $\Phi\in Co_{\mathsf{K}}^{\mathbf{B}}$ extending $\theta$ with respect to $\mathsf{K}$, i.e. $\mathcal{B}/\Phi\in\mathsf{K}$ and $\Phi\cap A^2 = \theta$; 
\item every $V$-formation of algebras in $\mathsf{K}$ has an amalgam in $\mathsf{Q}$. 
\end{enumerate}
Then $\mathsf{Q}$ has the (AP).
\end{theorem}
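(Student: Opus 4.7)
The plan is to invoke Lemma \ref{lemma: amalgamation Q}(2) and reduce the task to the following: given any V-formation $(\A,\mathbf{B},\C,i,j)$ in $\mathsf{Q}$ and any pair $x\neq y\in B$ (the case $x\neq y\in C$ being symmetric), produce $\D\in\mathsf{Q}$ together with homomorphisms $h\colon\mathbf{B}\to\D$ and $k\colon\C\to\D$ satisfying $h\circ i=k\circ j$ and $h(x)\neq h(y)$. The overall strategy is to descend the V-formation into $\mathsf{K}$ by quotienting by suitable relative congruences, amalgamate there via hypothesis (4), and then compose with the canonical projections.

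For the descent, I would first use the (relative) subdirect decomposition theorem for quasivarieties to choose $\theta_{\mathbf{B}}\in Co^{\mathbf{B}}_{\mathsf{Q}}$ such that $(x,y)\notin\theta_{\mathbf{B}}$ and $\mathbf{B}/\theta_{\mathbf{B}}\in\mathsf{Q}_{RSI}$; hypothesis (1) then upgrades this to $\mathbf{B}/\theta_{\mathbf{B}}\in\mathsf{K}$. Next I would pull back along $i$, setting $\theta\coloneqq\{(a,a')\in A^{2}\mid (i(a),i(a'))\in\theta_{\mathbf{B}}\}$: the induced map $\A/\theta\hookrightarrow\mathbf{B}/\theta_{\mathbf{B}}$ is an embedding, so by hypothesis (2) we have $\A/\theta\in\mathsf{K}$, i.e. $\theta\in Co^{\A}_{\mathsf{K}}$. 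Identifying $\A$ with the subalgebra $j(\A)\leq\C$, I would then apply hypothesis (3) to obtain an extension $\Phi\in Co^{\C}_{\mathsf{K}}$ of $\theta$, with $(j(a),j(a'))\in\Phi$ iff $(a,a')\in\theta$.

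At this point the quotients form a new V-formation $(\A/\theta,\mathbf{B}/\theta_{\mathbf{B}},\C/\Phi,\bar i,\bar j)$ lying entirely in $\mathsf{K}$, with $\bar i(a/\theta)=i(a)/\theta_{\mathbf{B}}$ and $\bar j(a/\theta)=j(a)/\Phi$ both embeddings. Hypothesis (4) then yields an amalgam $(\D,h',k')$ of this V-formation with $\D\in\mathsf{Q}$. Setting $h\coloneqq h'\circ q_{\theta_{\mathbf{B}}}$ and $k\coloneqq k'\circ q_{\Phi}$, where $q_{\theta_{\mathbf{B}}},q_{\Phi}$ are the canonical projections, a diagram chase using $q_{\theta_{\mathbf{B}}}\circ i=\bar i\circ q_{\theta}$ and $q_{\Phi}\circ j=\bar j\circ q_{\theta}$ gives the commutativity $h\circ i=k\circ j$, while $h(x)\neq h(y)$ follows because $h'$ is an embedding and $x/\theta_{\mathbf{B}}\neq y/\theta_{\mathbf{B}}$ by construction.

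The main obstacle, or at least the step bearing the genuine content of the theorem, is the use of hypothesis (3): all other steps are essentially universal-algebraic bookkeeping (choosing the separating congruence via subdirect representation, transporting it along embeddings, and composing quotients with the amalgam maps). Without (3), one would be stuck with the $\mathsf{K}$-congruence $\theta$ on $\A$ and no canonical way to match it to a $\mathsf{K}$-quotient of $\C$, so the reduction to a V-formation fully inside $\mathsf{K}$ would fail.
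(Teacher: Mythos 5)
Your proposal is correct and follows essentially the same route as the paper's proof: reduce to condition (2) of Lemma \ref{lemma: amalgamation Q}, pass to a relatively subdirectly irreducible quotient of $\mathbf{B}$ separating $x$ and $y$ (the paper obtains this by taking, via Zorn's lemma, a relative congruence maximal with respect to omitting $(x,y)$, which is exactly what the subdirect decomposition theorem you cite packages), restrict to $\A$, extend to $\C$ by hypothesis (3), amalgamate in $\mathsf{K}$ by hypothesis (4), and compose with the canonical projections. The only cosmetic difference is that you invoke the subdirect representation theorem where the paper runs the Zorn argument explicitly.
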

\begin{proof}
We show that $\mathsf{Q}$ satisfies the condition (2) in Lemma \ref{lemma: amalgamation Q}. Let $(\A,\mathbf{B},\C, i, j)$ be a $V$-formation in $\mathsf{Q}$ and $x\neq y\in B$. By Zorn lemma, it is possible to find a relative congruence $\Psi$ of $\mathbf{B}$ maximal with respect to the property $(x,y)\not\in\Psi$. Let $\theta = \Psi\cap A^{2}$, and define the map $f\colon\A/\theta\to\mathbf{B}/\Psi$, $[a]_\theta\mapsto f([a]_{\theta})\coloneqq[a]_{\Psi}$. Observe that $f$ is an injective homomorphism. Indeed, for $[a]_{\theta}\neq [b]_{\theta}$, i.e. $(a,b)\not\in\theta$, hence $(a,b)\not\in\Psi$ ($\Psi$ is maximal with respect to this property), i.e. $[a]_{\Psi}\neq [b]_{\Psi}$. $\mathbf{B}/\Psi\in \mathsf{Q}_{RSI}$ (since $\Psi$ is completely meet-irreducible), thus, by hypothesis (1), $\mathbf{B}/\Psi\in \mathsf{K}$; $\mathbf{A}/\theta\leq \mathbf{B}/\Psi$, hence $\mathbf{A}/\theta\in\mathsf{K}$ (by hyp. (2)). Since $\A\leq\C$ (upon identifying $\A$ with $j(\A)$) and $\theta\in Co_{\mathsf{K}}^{\A}$, by hyp (3), there exists $\Phi\in Co_{\mathsf{K}}^{\C}$ s.t. $\Phi\cap A^2 = \theta$ and $\C/\Phi\in\mathsf{K}$. The map $g\colon\mathbf{A}/\theta\to\C/\Phi $ defined as $[a]_{\theta}\mapsto g([a]_{\theta})\coloneqq [a]_{\Phi}$ is an injective homomorphism. Therefore $(\mathbf{A}/\theta, \mathbf{B}/\Psi, \C/\Phi, f, g )$ is a $V$-formation of algebras in $\mathsf{K}$. By hyp (4), there exists an amalgam $(h,k,\D)$ in $\mathsf{Q}$. Define the homomorphisms $h'\colon\mathbf{B}\to\D$ and $k'\colon\C\to\D$ as $h' = h\circ\pi_{\Psi}$ and $k' = k\circ\pi_{\Phi}$ ($\pi_{\Psi}$ and $\pi_{\Phi}$ the projections onto the quotients $\mathbf{B}/\Psi$ and $\C/\Phi$, resp.). Observe that $h'(x)\neq h'(y)$ and $h'\circ i = k'\circ j$. Indeed, $h'(x) = h(\pi_{\Psi}(x))= h([x]_{\Psi}) \neq h([y]_{\Psi}) = h(\pi_{\Psi}(y)) = h'(y)$ (where we have used the injectivity of $h$ and the fact that $[x]_{\Psi}\neq[y]_{\Psi}$). As for the latter, let $a\in A$, $h'\circ i(a) = h(\pi_{\Psi}(i(a))) = h([i(a)]_{\Psi}) = k ([j(a)]_{\Phi}) = k(\pi_{\Phi}(j(a))) = k'(j(a)) = k'\circ j(a) $ (where we have used the fact that $(\D, f,g)$ is an amalgam). Finally, by Lemma \ref{lemma: amalgamation Q}, we conclude that $\mathsf{Q}$ has the (AP).
\end{proof}

\begin{remark}\label{rem: relative CEP}
$\B$ is a finitary logic with a Deduction  Theorem (Theorem \ref{th: deduzione per Be}): this is a stronger property than the local deduction, which  implies that the logic enjoys the filter extension property (\cite[Thm. 2.3.5]{Cz01}). This translates into the relative congruence extension property (by the algebraizability of $\B$, the lattice of logical filters is dually isomorphic to that of the relative congruences). 
\end{remark}



\begin{theorem}\label{teo: AP per BCA}
$\mathsf{BCA}$ has the Amalgamation Property (AP).
\end{theorem}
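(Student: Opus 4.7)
The plan is to invoke Theorem~\ref{th: amalgamation for Q} with $\mathsf{Q}=\mathsf{BCA}$, choosing as auxiliary subclass $\mathsf{K}=IS(\WKt)$, which up to isomorphism equals $\{\mathbf{B}_{2},\WKt\}$. The motivation for this choice is that $\mathsf{BCA}$ is relatively semisimple (as explicitly noted after Theorem~\ref{thm: nbca generated by 4+2}): every relatively subdirectly irreducible Bochvar algebra is relatively simple, and since $\mathsf{BCA}=ISP(\WKt)$ with $\WKt$ finite, every such algebra embeds into $\WKt$. Thus $\mathsf{BCA}_{RSI}\subseteq\mathsf{K}$, giving condition~(1) of Theorem~\ref{th: amalgamation for Q}, and condition~(2) is immediate by construction of $\mathsf{K}$.

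For condition~(4) one has to amalgamate only $V$-formations whose three vertices lie in $\{\mathbf{B}_{2},\WKt\}$. Since $\WKt$ has trivial automorphism group and $\mathbf{B}_{2}$ sits inside $\WKt$ in essentially one way (as the Boolean subalgebra $\{0,1\}$), the nontrivial cases reduce to $V$-formations of the form $(\mathbf{B}_{2},\WKt,\WKt)$ with inclusion embeddings; in each case the amalgam can be taken to be $\WKt$ itself with $h,k$ the appropriate inclusions or identities. All such amalgams belong to $\mathsf{BCA}$.

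The heart of the proof is condition~(3), a refined version of the relative congruence extension property adapted to $\mathsf{K}$. Since $\B$ has the Deduction Theorem (Theorem~\ref{th: deduzione per Be}), $\mathsf{BCA}$ enjoys the relative CEP by Remark~\ref{rem: relative CEP}. Given $\A\leq\mathbf{B}$ in $\mathsf{BCA}$ and $\theta\in Co^{\A}_{\mathsf{K}}$, relative CEP first yields some $\Phi_{0}\in Co^{\mathbf{B}}_{\mathsf{BCA}}$ with $\Phi_{0}\cap A^{2}=\theta$. Then apply Zorn's Lemma to the family
\[
\mathcal{F}=\{\Psi\in Co^{\mathbf{B}}_{\mathsf{BCA}}:\Psi\supseteq\Phi_{0},\ \Psi\cap A^{2}=\theta\},
\]
obtaining a maximal element $\Phi$. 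By maximality, any relative congruence on $\mathbf{B}/\Phi$ strictly above the diagonal must, when pulled back to $\mathbf{B}$, produce a $\Psi\supsetneq\Phi$ with $\Psi\cap A^{2}\supsetneq\theta$; equivalently, it must restrict non-trivially to the embedded copy of $\A/\theta$. But $\A/\theta\in\mathsf{K}$ is relatively simple, so any such restriction is the universal congruence on $\A/\theta$. Hence $\mathbf{B}/\Phi$ admits a smallest non-trivial relative congruence, namely the one generated by the pairs in the image of $\A/\theta$ (nontrivial because $|\A/\theta|\geq 2$), whence $\mathbf{B}/\Phi$ is relatively subdirectly irreducible, and thus belongs to $\mathsf{K}$.

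The delicate step is precisely this last argument: one must confirm that the Zorn-maximality of $\Phi$ within $\mathcal{F}$ really forces the embedded $\A/\theta$ to be ``essential'' in $\mathbf{B}/\Phi$, and combine this essentiality with the relative simplicity of $\A/\theta$ to produce a minimum non-trivial relative congruence on $\mathbf{B}/\Phi$. Once condition~(3) is secured in this way, Theorem~\ref{th: amalgamation for Q} yields (AP) for $\mathsf{BCA}$ without further effort.
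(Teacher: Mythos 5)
Your proposal is correct and shares the paper's overall skeleton --- both invoke Theorem~\ref{th: amalgamation for Q} with $\mathsf{K}=\mathsf{BCA}_{RSI}=\{\mathbf{B}_{2},\WKt\}$ and both dismiss conditions (1), (2), (4) by inspection --- but you verify the crucial condition (3) by a genuinely different argument. The paper builds the extending congruence $\Phi$ explicitly from the P\l onka sum decomposition: it restricts a CEP-extension $\Psi$ to the bottom fiber $\mathbf{B}_{i_0}$, enlarges that restriction to a maximal Boolean congruence $\Phi_{i_0}$, and defines $(x,y)\in\Phi$ iff $(\Jdue x,\Jdue y)\in\Phi_{i_0}$, then checks by hand that $\mathbf{B}/\Phi\in\mathsf{K}$ and $\Phi\cap A^{2}=\theta$. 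You instead argue abstractly: seed with a CEP-extension $\Phi_{0}$ (Remark~\ref{rem: relative CEP}), take a Zorn-maximal $\Phi$ among relative congruences above $\Phi_{0}$ restricting to $\theta$, and note that maximality together with the relative simplicity of $\A/\theta$ forces every relative congruence of $\mathbf{B}$ strictly above $\Phi$ to contain all of $A\times A$, in particular the fixed pair $(0,1)$, which lies outside $\Phi$ since $\Phi\cap A^{2}=\theta$ and $\A/\theta$ is non-trivial; hence $\Phi$ is completely meet-irreducible, so $\mathbf{B}/\Phi\in\mathsf{BCA}_{RSI}=\mathsf{K}$. Your route avoids all structural computations with P\l onka sums and isolates the abstract ingredients actually needed (relative CEP, relative semisimplicity, closure of the relatively simple algebras under subalgebras), so it would transfer to other quasivarieties with these features; the paper's route is more computational but exhibits the extending congruence concretely. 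The only steps worth spelling out in your version are that the family $\mathcal{F}$ is closed under unions of chains (so Zorn applies --- this holds because relative congruence lattices of quasivarieties are algebraic) and that the pullback/restriction correspondences for relative congruences behave as claimed; both are routine.
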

\begin{proof}
We show that $\mathsf{K} = \mathsf{BCA}_{RSI} = \{\WK^{e},\mathbf{B}_{2}\} $ satisfies the assumptions (1)-(4) of Theorem \ref{th: amalgamation for Q}. (1), (2) and (4) are immediate. As concerns (3): suppose that $\A,\mathbf{B}\in\mathsf{BCA}$ with $\A\leq\mathbf{B}$, $\theta\in Co^{\A}_{\mathsf{K}}$ and $\A_{/\theta}\in \mathsf{K}$. $\mathbf{B}$ decomposes into a P\l onka sum $\PL(\mathbf{B}_{i})_{i\in I}$ and, since $\A\leq\mathbf{B}$, and $\mathsf{S}(\PL(\mathbf{B}_i))\subseteq \PL(\mathsf{S}(\mathbf{B}_{i}))$, then $\A$ decomposes into a P\l onka sum $\PL(\A_{j})$ of subalgebras of $\mathbf{B}_{i}$, over a semilattice of indexes $J\leq I$, thus, in particular, $i_{0}\in J$. Observe that, for every $i\in I$, $\theta_i = \theta\cap A_{i}^{2}$ is a (Boolean) congruence on $\A_i$. The hypothesis that $\A_{/\theta}\in \mathsf{K}$ implies that $\theta_{i_0}$ is a maximal congruence on $\A_{i_0}$ (a congruence corresponding to a maximal ideal). Since $\mathsf{BCA}$ has the relative congruence extension property and $\A\leq \mathbf{B}$ then there exists a relative congruence $\Psi$ on $\mathbf{B}$ extending $\theta$ ($\Psi\cap A^2 = \theta$). $\Psi_{i_0} = \Psi\cap B_{i_0}$ is a (Boolean) congruence on $\mathbf{B}_{i_0}$; let $\Phi_{i_0}$ (one of) its maximal extension on $\mathbf{B}_{i_0}$ and $\Phi$ the congruence on $\mathbf{B}$ defined as follows: 
$ (x,y)\in\Phi $ iff $(\Jdue x,\Jdue y)\in\Phi_{i_0} $. It is immediate to check that $\Phi\in Co_{\mathsf{K}}^{\mathbf{B}}$ and $\mathbf{B}/\Phi\in\mathsf{K}$. Finally, it also holds that $\Phi\cap A^2 = \theta$: $\theta\subseteq \Phi\cap A^2$ follows by construction. On the other hand, let  $a,b\in A$ (with $a\in A_i$ and $b\in B_j$) and $(a,b)\in\Phi$, i.e. $(\Jdue a,\Jdue b)\in\Phi_{i_0}$, hence $(\Jdue a,\Jdue b)\in\theta_{i_0}$ (by construction), thus $\Jdue a,\Jdue b \in [1]_{\theta}$ or $\Jdue a,\Jdue b \in [0]_{\theta}$. Suppose $\Jdue a,\Jdue b \in [1]_{\theta}$ (the other case is analogous), therefore $a = p_{i_{0}i}(\Jdue a)\in [1_i]_{\theta}$ and $b = p_{i_{0}j}(\Jdue b)\in [1_j]_{\theta}$. The assumption that $\A/\theta\in\mathsf{K}$ implies that $[1_i]_\theta = [1_j]_{\theta}$, from which $(a,b)\in\theta$.
\end{proof}

We conclude this section by proving that also the  quasivariety $\mathsf{NBCA}$ has the (AP). Before proceeding further, it is worth noticing that we cannot apply the same strategy used in the case of $\mathsf{BCA}$. This is a consequence of the following remark, which also proves that the logic $\mathsf{NB}_{e}$ does not have a local deduction theorem.

  \begin{remark}
Observe that $\mathsf{NBCA}_{RSI}\subseteq IS(\mathbf{B}_{4}\oplus\mathbf{B}_{2})$, because $ISP(\mathbf{B}_{4}\oplus\mathbf{B}_{2})=IP_{S}S(\mathbf{B}_{4}\oplus\mathbf{B}_{2})$. So, the only relatively subdirectly irreducible members of $\mathsf{NBCA}$ are $\mathbf{B}_{4}\oplus\mathbf{B}_{2}$ and $\mathbf{B}_{2}$. The class $K=\{\mathbf{B}_{4}\oplus\mathbf{B}_{2},\mathbf{B}_{2}\}$ does not satisfy condition (3) of Theorem \ref{th: amalgamation for Q}, consider  the algebra $\mathbf{B}_{4}\oplus\mathbf{B}_{2}$ depicted in \ref{pic: 4+2}, where $J_{_1}(\bot)=\neg a$. Observe that $\mathbf{B}_{4}\leq\mathbf{B}_{4}\oplus\mathbf{B}_{2}$ and  consider $\theta=Cg^{\mathbf{B}_{4}}_{\mathsf{NBCA}}(1,\neg a)$. Clearly $\mathbf{B}_{4}/\theta\cong\mathbf{B}_{2}$ but, for each $\mathsf{NBCA}$-congruence $\Phi$ on $\mathbf{B}_{4}\oplus\mathbf{B}_{2}$,  if $(\neg a,1)\in\Phi$ then $\Phi=\nabla\neq\theta\cap\mathbf{B}_{4}^{2}$. This shows that $\mathsf{NBCA}$ fails the relative congruence extension property or, equivalently, that $\mathsf{NB}_{e}$ fails to have a local deduction theorem.
\end{remark}

Nonetheless, (AP) holds for $\mathsf{NBCA}$, as shown in the following.

\begin{theorem}\label{thm: ap nbca}
 $\mathsf{NBCA}$ has the amalgamation property.
\end{theorem}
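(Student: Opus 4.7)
The plan is to leverage the amalgamation property of $\mathsf{BCA}$ (Theorem~\ref{teo: AP per BCA}) and then correct the resulting amalgam so that it lies in $\mathsf{NBCA}$. Given a V-formation $(\A, \mathbf{B}, \C, i, j)$ in $\mathsf{NBCA}$, I would first view it as a V-formation in $\mathsf{BCA}$ and invoke Theorem~\ref{teo: AP per BCA} to produce an amalgam $(\D_{0}, h_{0}, k_{0})$ in $\mathsf{BCA}$. Since $\D_{0}$ may contain trivial fibers, it need not belong to $\mathsf{NBCA}$, and as the remark preceding the theorem stresses, Theorem~\ref{th: amalgamation for Q} cannot be applied directly because $\mathsf{NBCA}$ fails the relative congruence extension property.

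To correct this, I would consider the product $\D := \D_{0} \times \mathbf{B}_{2}$. This algebra lies in $\mathsf{NBCA}$ because $\mathbf{B}_{2}$ has no fixpoints and a fixpoint in a product requires a fixpoint in each factor. The embeddings into $\D$ are defined componentwise by $h(b) := (h_{0}(b), r_{\mathbf{B}}(b))$ and $k(c) := (k_{0}(c), r_{\C}(c))$, where $r_{\mathbf{B}}\colon \mathbf{B} \to \mathbf{B}_{2}$ and $r_{\C}\colon \C \to \mathbf{B}_{2}$ are retractions, whose existence is guaranteed by Lemma~\ref{lem: ret e NBCA}. These maps are clearly homomorphisms, and they are embeddings because $h_{0}, k_{0}$ already are. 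The commutativity of the amalgam square $h \circ i = k \circ j$ reduces to the condition $r_{\mathbf{B}} \circ i = r_{\C} \circ j$ on $\A$.

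The main technical step, and the principal obstacle, is to arrange the retractions so that they agree on $\A$. I would fix a retraction $r_{\A}\colon \A \to \mathbf{B}_{2}$ and extend it along $i$ and $j$ to retractions on $\mathbf{B}$ and $\C$. At the level of ultrafilters on the bottom Boolean fibers, this amounts to extending an ultrafilter $U_{\A}$ on $\A_{i_{0}}$ containing the set $X_{\A}$ appearing in the proof of Lemma~\ref{lem: ret e NBCA} to ultrafilters on the bottom fibers of $\mathbf{B}$ and $\C$ containing the corresponding sets $X_{\mathbf{B}}$ and $X_{\C}$. The absence of trivial fibers in $\A, \mathbf{B}, \C$ keeps the generating filters proper, which permits the extensions by a Zorn-type argument. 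If a single $r_{\A}$ does not extend compatibly to both sides, one enlarges the second factor to $\mathbf{B}_{2}^{P}$ indexed by the set $P$ of compatible pairs $(f, g) \in \mathrm{Hom}(\mathbf{B}, \mathbf{B}_{2}) \times \mathrm{Hom}(\C, \mathbf{B}_{2})$ with $f \circ i = g \circ j$, using the extra coordinates to secure simultaneously commutativity and the separation needed for $h, k$ to be embeddings. Verifying that $\D$ belongs to $\mathsf{NBCA}$ and that the diagram commutes then completes the argument.
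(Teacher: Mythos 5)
Your overall architecture agrees with the paper's: amalgamate in $\mathsf{BCA}$ first, then pass to $\D_{0}\times\mathbf{B}_{2}$ to kill the trivial fiber, with the second coordinate supplied by ultrafilter-induced maps to $\mathbf{B}_{2}$. But the step you yourself flag as the principal obstacle --- arranging $r_{\mathbf{B}}\circ i = r_{\C}\circ j$ --- is where the proposal genuinely breaks down. Your claim that a fixed retraction $r_{\A}$ extends along $i$ because ``the absence of trivial fibers keeps the generating filters proper'' is false: the filter generated by $i(U_{\A})\cup X_{\mathbf{B}}$ need not be proper. Concretely, take $\A=\mathbf{B}_{4}$ (atoms $a,\neg a$) embedded as the bottom fiber of $\mathbf{B}=\mathbf{B}_{4}\oplus\mathbf{B}_{2}$, and let $r_{\A}$ be the retraction given by the ultrafilter $\{\neg a,1\}$. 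Any retraction $r$ of $\mathbf{B}_{4}\oplus\mathbf{B}_{2}$ must satisfy $r(a)=r(\Jdue\top)=\Jdue r(\top)=r(\top)$ and $r(0)=r(\Jdue\bot)=r(\bot)=\neg r(\top)$, which forces $r(a)=1$ and $r(\neg a)=0$; so $r_{\A}$ does not extend. Your fallback --- indexing the second factor by the set $P$ of compatible pairs $(f,g)$ with $f\circ i=g\circ j$ --- is circular: if $P=\emptyset$ the factor $\mathbf{B}_{2}^{P}$ is trivial and $\D\cong\D_{0}\notin\mathsf{NBCA}$, so the whole burden of the proof is exactly to show $P\neq\emptyset$, which you never do.

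The paper closes this gap by \emph{not} building the two retractions from $\A$ outward, but by pulling a single ultrafilter back from the $\mathsf{BCA}$-amalgam $\D_{0}$ itself: choose an ultrafilter $F_{0}$ on the bottom fiber of $\D_{0}$, set $F=\bigcup_{i<u}p_{i_{0}i}[F_{0}]$, and let the second coordinate of $h'(b)$ (resp.\ $k'(c)$) be $\top$ or $\bot$ according to whether $h_{0}(b)\in F$ (resp.\ $k_{0}(c)\in F$). Compatibility on $\A$ is then automatic from $h_{0}\circ i=k_{0}\circ j$, since both sides test membership of the \emph{same} element of $\D_{0}$ in the \emph{same} set $F$; what remains is only the (checkable) verification that these maps are embeddings, using $\Jdue d\in p_{i_{0}i}^{-1}(d)$. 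If you replace your extension argument by this pullback, the rest of your proposal goes through.
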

\begin{proof}
 Let $(\A,\mathbf{B},\C, f, g)$ be a $V$-formation in $\mathsf{NBCA}$. By theorem \ref{teo: AP per BCA}, there exists an amalgam $(h,k,\mathbf{D})$ with $\D\in\mathsf{BCA}$. If $\D$ has no trivial fibers, then $(h,k,\mathbf{D})$ is also an amalgam in $\mathsf{NBCA}$. Otherwise, let $u\in I$ be the index of the trivial fiber $\mathbf{D}_{u}$ with universe $\{u\}$, where $I$ is the underlying semilattice of $\mathbf{D}$ and homomorphisms $p_{ij}$ for every $i\leq j$. Observe that $k(b)\neq u$ and $h(c)\neq u$, for each $b\in B$, $c\in C$. Consider an ultrafilter $F_{0}$ over $\mathbf{D}_{i_0}$ (the lowest fiber in $\D$) and $F_{i}=p_{i_{0}i}[F_{0}]$, for each $i_0\leq i$ and set $F=\displaystyle\bigcup_{i<u} F_{i}$. Each $F_{i}$ is an ultrafilter over the algebra $\mathbf{D}_{i}$ (since homomorphisms are surjective).  Consider the algebra $\mathbf{D}^{\prime}=\mathbf{D}\times \mathbf{B}_{2}$, which shares with $\mathbf{D}$ the semilattice structure $I$  and whose homomorphisms are denoted by $q_{ij}$ for $i\leq j$. Observe that this algebra does not contain trivial fibers as $u$ is the top element of $I$ and $\mathbf{D}^{\prime}_{u}$ is the two-elements Boolean algebra with universe $\{\langle u,\top\rangle$, $\langle u,\bot\rangle\}$. This entails that $\mathbf{D}^{\prime}\in\mathsf{NBCA}$. Define the map $k'\colon\mathbf{B}\to\mathbf{D}^{\prime}$ such that, for any $b\in B$:
 
\[
k^{\prime}(b) = \begin{cases}
\langle k(b),\top\rangle \text{, if } k(b)\in F, \\
\langle k(b),\bot\rangle \text{ otherwise.}
\end{cases}
\] 
\noindent
Similarly, consider $h^{\prime}\colon\mathbf{C}\to\mathbf{D}^{\prime}$ defined by the same rule when applied to elements in $C$. We show that $k^{\prime} $ is an embedding. Clearly the map in injective, as $k$ is. It is also clear that $k^{\prime}$ preserves the Boolean operations. Moreover, for $b\in B$, 
\begin{align*}
 k^{\prime}(J_{_2}b)=\langle k(J_{_2}b),\top\rangle &\iff \\
  k(J_{_2}b)=J_{_2}k(b)\in F &\iff\\
   k(b)\in F & \iff\\
 J_{_2}k^{\prime}(b)=J_{_2}\langle(k(b),\top\rangle=
 \langle J_{_2}k(b),J_{_2}\top\rangle&=\langle k(J_{_2}b),\top\rangle,
 \end{align*}
 where the second equivalence is justified because, for any $i\in I$ and $d_{i}\in D_{i}$, $J_{_2}d_{i}\in p^{-1}_{0i}(d_{i})$. This, together with an analogous argument applied to $h^{\prime}$, show that $k^{\prime}, h^{\prime}$ are embeddings. In order to conclude the proof, recall that for $a\in A$, $k\circ f(a)=h\circ g(a)$, so the following equivalences hold:
 
\begin{align*}
 k^{\prime}\circ f(a)=\langle k\circ f(a),\top\rangle \iff&\\
 k\circ f(a)\in F\iff&\\
 g^{\prime\prime}\circ g(a)=\langle g^{\prime}\circ g(a),\top\rangle=\langle k\circ f(a),\top\rangle.
\end{align*}
\noindent
This proves $k^{\prime}\circ f(a)=h^{\prime}\circ g(a)$, as desired.
 \end{proof}
 \newpage
 
\appendix\section{}\label{appendix}
 
 This appendix is devoted to the proof of Theorem \ref{th: algebre di Bochvar2}.  Let us denote by $\class{BCA}_{2}$ the quasivariety axiomatized by
(1)-(13) in Theorem \ref{th: algebre di Bochvar2}.
\begin{lemma}\label{lemma: aritmetica 1}
The following identities hold in $\class{BCA}_2$.
\begin{enumerate}
\item $1 \wedge  \varphi \approx \varphi$.
\item$\Ji \varphi\vee \neg\Ji \varphi\approx 1 $, $\forall k\in\{0,1,2\}$.
\item $\Ji \varphi\wedge \neg\Ji \varphi\approx 0 $, $\forall k\in\{0,1,2\}$.
\item $\Jdue(\varphi\vee\neg\varphi) \approx \Jdue\varphi\vee\Jdue\neg\varphi$.
\item$\Jdue\Ji \varphi\thickapprox\Ji \varphi$, for every $k\in\{0,1,2\}$.
\item $\Jzero\Ji \varphi\thickapprox \neg\Ji \varphi$, for every $k\in\{0,1,2\}$.
\item $J_{_i}\varphi \thickapprox\neg(J_{_j}\varphi\vee J_{_k}\varphi)$, for $i\neq j\neq k\neq i$.
\item $J_{_k}\varphi\vee\neg J_{_k}\varphi \thickapprox 1$, for every $k\in\{0,1,2\}$.
\item$((J_{_i}\varphi\vee J_{_k}\varphi)\land J_{_i}\varphi)\approx J_{_i}\varphi$, for $i,k\in\{0,1,2\}.$
\end{enumerate}
\end{lemma}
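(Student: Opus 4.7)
The plan is to derive the nine identities in stages, working in the $\Jdue$-fragment first and then transferring results to $\Jzero$ and $\Juno$. Identity (1) comes from a direct De Morgan computation: $1 \wedge \varphi = \neg\neg(1 \wedge \varphi) = \neg(\neg 1 \vee \neg\varphi) = \neg(0 \vee \neg\varphi) = \neg\neg\varphi = \varphi$, using (5), (7), (6), (8). The same kind of manipulation yields the standard De Morgan-bisemilattice consequences --- commutativity and idempotence of $\wedge$, dual distributivity of $\vee$ over $\wedge$, and $\neg(\alpha \wedge \beta) \approx \neg\alpha \vee \neg\beta$ --- from (1)--(8) alone. Identity (3) for $k=2$ is then immediate from (11): $0 = \neg 1 = \neg(\Jdue\varphi \vee \neg\Jdue\varphi) = \neg\Jdue\varphi \wedge \Jdue\varphi$.

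The technical heart of the argument is a \emph{complemented-absorption lemma}: whenever $X \vee \neg X \approx 1$, then for every $Y$, $Y \vee (Y \wedge X) \approx Y$. I would prove this by rewriting $Y = Y \wedge 1 = Y \wedge (X \vee \neg X) = (Y \wedge X) \vee (Y \wedge \neg X)$ via (1) and (4), substituting into $Y \vee (Y \wedge X)$, collapsing the duplicated $Y \wedge X$ by idempotence of $\vee$, and applying distributivity backwards to return to $Y \wedge 1 = Y$. Because (11) supplies the hypothesis for every $\Jdue\alpha$, this lemma is the effective absorption law on $\Jdue$-terms. Applied to axiom (12) with $\psi = \neg\varphi$ --- whose right-hand side, after (5) and $\wedge$-idempotence, reduces to $(\Jdue\varphi \wedge \Jdue\neg\varphi) \vee \Jdue\varphi \vee \Jdue\neg\varphi$ --- the absorption collapses the first disjunct into $\Jdue\varphi$, yielding identity (4).

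Instantiating (10) at $\Jdue\varphi$ together with (9) gives $\Jdue\Jdue\varphi \approx \Jdue\varphi \wedge \neg\Juno\Jdue\varphi$, so (5) for $k = 2$ reduces to showing $\Jdue\varphi \wedge \Juno\Jdue\varphi \approx 0$. The cleanest route is to invoke the quasi-equation (13): verify that $\Jzero$ and $\Jdue$ agree on $\Juno\Jdue\varphi$ and on $0$, where the needed values of $\Jdue 0, \Jzero 0, \Jdue 1, \Jzero 1$ are pinned down by instantiating (9)--(11) at ground terms like $\Jdue\alpha \vee \neg\Jdue\alpha$ and applying the absorption lemma. The remaining $k \in \{0,1\}$ cases of (2), (3), (5), (6), (7), (8) collapse via two bridge identities $\Jzero\varphi \approx \Jdue\neg\varphi$ and $\Juno\varphi \approx \neg(\Jdue\varphi \vee \Jdue\neg\varphi)$, each proved by checking that $\Jzero$ and $\Jdue$ take the same values on both sides and invoking (13); note that (6), (7) at $k=2$ are just (9), (10) themselves. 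Finally, identity (9) --- the absorption $(J_{_i}\varphi \vee J_{_k}\varphi) \wedge J_{_i}\varphi \approx J_{_i}\varphi$ --- is one more direct instance of the complemented-absorption lemma, since every $J_{_i}\varphi$ becomes complemented once (2) is in hand.

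The main obstacle is the cascade set off by the absorption lemma: since the $\Ji$-free reduct of $\class{BCA}_2$ is only a De Morgan bisemilattice (axiom \textbf{I6} of $\class{IBSL}$ is \emph{not} among (1)--(8)), ordinary absorption cannot be quoted and must be engineered from scratch via the $1 = \Jdue\alpha \vee \neg\Jdue\alpha$ trick. A secondary technical point is that each use of the quasi-equation (13) demands computing $\Jzero$- and $\Jdue$-images of compound terms, so one must sequence the lemmas carefully: establish absorption, then the $\Jdue$-constants, then $\Jdue\Jdue\varphi \approx \Jdue\varphi$ and $\Juno\Jdue\varphi \approx 0$, and only then reap the bridge identities that unify the $\Ji$ fragment.
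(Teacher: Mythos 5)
Your architecture is largely sound and overlaps with the paper's: identity (1) is proved the same way, your complemented-absorption lemma is precisely the trick the paper uses inline (e.g.\ $1\vee\Jzero\varphi=\Jzero\varphi\vee\neg\Jzero\varphi\vee\Jzero\varphi=\Jzero\varphi\vee\neg\Jzero\varphi=1$), and your derivations of (4) and (9) from it are correct; you also correctly identify the absence of absorption in the reduct as the central obstacle. The genuine gap is in item (5) for $k=2$, i.e.\ $\Jdue\Jdue\varphi\approx\Jdue\varphi$, and it is load-bearing for everything downstream. Your route -- reduce to $\Juno\Jdue\varphi\approx 0$ (or $\Jdue\varphi\wedge\Juno\Jdue\varphi\approx 0$) and settle it with quasi-equation (13) -- is circular. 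To invoke (13) you must compute $\Jdue 0$ and $\Jzero 0=\Jdue 1$ on one side and $\Jdue(\Juno\Jdue\varphi)$ on the other. Instantiating (12) at $1=\Jdue\alpha\vee\neg\Jdue\alpha$ and absorbing yields only $\Jdue 1=\Jdue\Jdue\alpha\vee\neg\Jdue\alpha$, which cannot be collapsed to $1$ without already knowing $\Jdue\Jdue\alpha=\Jdue\alpha$: at this stage the only terms you can certify $\Jdue$ fixes are those of the form $\neg\Jdue\alpha$ (axiom (9) unfolded), and no complemented pair of such terms is available before idempotence of $\Jdue$ is known. Likewise $\Jdue(\Juno\Jdue\varphi)=\Jdue\neg(\Jdue\Jdue\varphi\vee\neg\Jdue\varphi)$ cannot be evaluated without either the meet/join formulas for $\Jdue$ (which come much later, in Lemma \ref{lemma: aritmetica 2}) or the very identity being proved. (Your reduction also quietly uses $\Juno\psi\vee\neg\Juno\psi\approx 1$, which you schedule afterwards; that part is harmless, since it follows from (1)--(8), (11) and the definitions alone.)

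The paper closes this step with a two-line argument you are missing: since $\Jzero\psi$ abbreviates $\Jdue\neg\psi$, axiom (9) says $\Jdue(\neg\Jdue a)=\neg\Jdue a$, so $\neg\Jdue a$ is itself a $\Jdue$-image and (9) can be applied to it a second time:
\[
\Jdue\Jdue a=\Jdue\neg(\neg\Jdue a)=\Jzero(\neg\Jdue a)=\Jzero\Jdue(\neg\Jdue a)=\neg\Jdue(\neg\Jdue a)=\neg\neg\Jdue a=\Jdue a.
\]
With this in hand, $\Jdue 1=1$ and $\Jdue 0=0$ follow and the rest of your plan goes through. A secondary point: your two ``bridge identities'' $\Jzero\varphi\approx\Jdue\neg\varphi$ and $\Juno\varphi\approx\neg(\Jdue\varphi\vee\Jdue\neg\varphi)$ are not theorems to be extracted from (13) -- the basis of Theorem \ref{th: algebre di Bochvar2} is stated in the restricted language $\langle\vee,\wedge,\neg,\Jdue,0,1\rangle$ with $\Jzero,\Juno$ as abbreviations, so they hold by definition. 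Indeed, if $\Juno$ were primitive, the only axiom mentioning it is (10), which constrains just the join $\Jzero\varphi\vee\Juno\varphi$, so the basis would leave $\Juno$ underdetermined and the theorem would fail.
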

\begin{proof}
Let $\A\in\class{BCA}_2$ and $a,b\in A$. \\
\noindent
(1) $1\wedge a = \neg(\neg 1\vee \neg a) = \neg(0\vee \neg a) = \neg(\neg a) = a$. \\
\noindent
(2) For $k=2$ follows by Definition \ref{th: algebre di Bochvar2}; $k=0$ is immediate. For $k=1$, $a\Juno a\lor\neg\Juno a = \neg(\Jdue a\lor\Jzero a)\lor\Jdue a\lor\Jzero a = (\neg\Jdue a\land\neg\Jzero a)\lor(\Jdue a\lor\Jzero a) =(\neg\Jdue a \lor\Jdue a\lor\Jzero a)\land(\neg\Jzero a \lor\Jdue a\lor\Jzero a) = (1\lor\Jzero a)\land(1\lor\Jdue a) = (\Jzero a\lor\neg\Jzero a\lor\Jzero a)\land(\Jdue a\lor\neg\Jdue a\lor\Jdue a) = (\Jzero a\lor\neg\Jzero a)\land(\Jdue a\lor\neg\Jdue a) = 1\land 1 = 1$.\\
\noindent
(3) Follows from (2) (and De Morgan laws). \\
\noindent
(4) $\Jdue(a\vee\neg a) = (\Jdue a\land\Jdue \neg a)\lor(\Jdue\neg a\land\Jdue\neg a)\lor(\Jdue a\land \Jdue a) = ((\Jdue a\land\Jdue \neg a)\lor \Jdue\neg a )\lor \Jdue a  = ((\Jdue a\lor \Jdue\neg a)\land (\Jdue \neg a\land \Jdue \neg a)) \lor \Jdue a = ((\Jdue a\lor \Jdue\neg a)\land \Jdue \neg a ) \lor \Jdue a = (\Jdue a\lor \Jdue\neg a \lor\Jdue a)\land (\Jdue \neg a  \lor \Jdue a) = (\Jdue a\lor \Jdue\neg a)\land (\Jdue \neg a  \lor \Jdue a) = \Jdue a\lor \Jdue\neg a$, where we have used \eqref{BCA:18} and distributivity.\\
\noindent
(5) $k = 2$: $\Jdue\Jdue a = \Jzero\neg\Jdue a = \neg\neg\Jdue a = \Jdue a$ (where we have used \eqref{BCA:10}).\\
\noindent
$k = 0$: $\Jdue\Jzero a = \Jdue\Jdue \neg a = \Jdue \neg a = \Jzero a$.\\
\noindent
$k = 1$: $\Jdue\Juno a = \Jdue\neg (\Jdue a\vee\Jzero a) = \Jzero (\Jdue a\vee\Jzero a) = \Jzero (\Jdue a\vee\Jdue\neg a) = \Jzero\Jdue(a\vee\neg a) = \neg\Jdue(a\vee\neg a) = \neg(\Jdue a\vee\Jdue\neg a) = \neg (\Jdue a\vee\Jzero a) = \Juno a$, where we have use the previous (4).\\
\noindent
(6) $k=2$ is included in Definition \ref{th: algebre di Bochvar2}, $k=0$ follows immediately by \eqref{BCA:10}. For $k= 1$: $\Jzero\Juno a = \Jdue\neg\Juno a = \Jdue (\Jdue a \vee\Jzero a) = (\Jdue \Jdue a\wedge \Jdue\Jzero a)\lor (\Jdue \neg\Jdue a\wedge \Jdue\Jzero a)\lor (\Jdue \Jdue a\wedge \Jdue\neg\Jzero a) = (\Jdue a\wedge \Jzero a)\lor (\Jzero\Jdue a\wedge \Jzero a)\lor (\Jdue a\wedge \Jzero\Jzero a) = (\Jzero a\land(\Jdue a\lor\neg\Jdue a))\vee (\Jdue a\wedge \neg\Jzero a) =  (\Jzero a\land 1)\vee (\Jdue a\wedge \neg\Jzero a) =\Jzero a\vee (\Jdue a\wedge \neg\Jzero a) = (\Jzero a\vee \Jdue)\land (\Jzero a\vee \neg\Jzero a) = (\Jzero a\vee \Jdue)\land 1 = \Jzero a\vee \Jdue = \neg\Juno a$.\\
\noindent
(7) We only have to show the case $\Jzero\varphi\approx\neg(\Jdue\varphi\lor\Juno\varphi)$ (as the others hold by Definition \ref{th: algebre di Bochvar2} and by the definition of $\Juno$). $\Jzero a =\Jdue\neg a = \neg(\Jzero\neg a\lor\neg\Juno\neg a) = \neg(\Jdue a\lor\Juno a).$\\
\noindent
(8) The case $k = 0$ follows immediately by the case $k=2$ (which holds by Definition \ref{th: algebre di Bochvar2}. For $k=1$: $\Juno a\vee\neg\Juno a = \neg(\Jdue a\vee\Jzero a)\vee (\Jdue a \vee\Jzero a) = (\neg\Jdue a\wedge\neg\Jzero a)\vee (\Jdue a \vee\Jzero a) =(\neg\Jdue a\vee \Jdue a \vee\Jzero a)\wedge (\neg\Jzero a\vee\Jdue a \vee\Jzero a) = (1\vee\Jzero a)\wedge(1\vee\Jdue a) = (\Jzero a\vee\neg\Jzero a\vee\Jzero a)\wedge(\Jdue a\vee\neg\Jdue a\vee\Jdue a) = (\Jzero a\vee\neg\Jzero a)\wedge(\Jdue a\vee\neg\Jdue a) = 1\wedge 1 = 1.$\\
\noindent
(9) We just show the case $i = 2$, $k = 0$ (as the others are analogous). $\Jdue a\wedge (\Jdue a\vee\Jzero a) = \Jdue a\wedge \neg\Juno a = \neg(\Jzero a\vee\Juno a)\wedge\neg\Juno a = \neg\Jzero a\wedge\neg\Juno a\wedge\neg\Juno a = \neg\Jzero a\wedge\neg\Juno a = \neg(\Jzero a\vee\Juno a) = \Jdue a$. \\
\end{proof}

Observe that, by Lemma \ref{lemma: aritmetica 1} (in particular, (8) and (9)), it follows that the image $\Jdue (\A)$ (and hence of $\Jzero$ and $\Juno$) of a Bochvar algebra forms the universe of a Boolean algebra : a fact that we will use several times (in the proofs) of the next Lemma, where we will indicate with $\leq$ the order of the mentioned Boolean algebra.

\begin{lemma}\label{lemma: aritmetica 2}
The following identities and quasi-identities hold in $\class{BCA}_2$.
\begin{enumerate}
\item $\Jdue 1 \approx 1$, $\Jzero 0 \approx 1$, $\Jdue 0 \approx 0$, $\Jzero 1 \approx 0$.


\item $\Jdue \varphi\vee\Jdue \varphi\approx \Jdue(1\vee \varphi)$.
\item $\Jdue(1\vee \varphi)\approx\Jdue(1\vee\neg \varphi)$.
\item $\Jdue(1\vee \varphi)\approx(\Jdue(1\vee (0\land\varphi))$.

\item $J_{_i}\varphi\leq \neg J_{_k}\varphi$, for every $i\neq k\in\{0,1,2\}$.
\item $\Jdue(\varphi\land 0)\approx 0$.

\item $\Jdue(1\vee(\varphi\land\psi))\approx\Jdue(1\vee \varphi)\land\Jdue(1\vee \psi)$.

\item $\Juno(\varphi\land\psi)\approx\Juno\varphi\vee\Juno\psi$. \label{eq: Juno meet}
\item $\Jzero(\varphi\land\psi)\approx (\Jdue\varphi\land\Jzero\psi)\vee(\Jzero\varphi\land\neg\Juno\psi).$ \label{eq: Jzero meet}

\item $\Jdue(\varphi\land\psi)\approx \neg(\Jdue\varphi\land\Jzero\psi)\land\Jdue\varphi\land\neg\Juno\psi$.
\item $\Jdue(\varphi\land\psi)\approx\Jdue\varphi\land\Jdue\psi$.


\item $J_{_0}(\varphi\lor \psi)\approx J_{_0}\varphi\land J_{_0}\psi$.
\item $ \varphi\lor J_{_k}\varphi \approx \varphi$, for $k\in\{1,2\}.$
\item $\Jzero \varphi \thickapprox \Jzero \psi \;\&\; \Juno \varphi \thickapprox \Juno \psi  \;\&\; \Jdue \varphi \thickapprox \Jdue \psi \;\Rightarrow\; \varphi \thickapprox \psi$.
\end{enumerate}

\end{lemma}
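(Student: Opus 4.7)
The proof of Lemma \ref{lemma: aritmetica 2} is a cascade of equational manipulations, each step invoking the axioms of $\class{BCA}_2$ from Theorem \ref{th: algebre di Bochvar2}, Lemma \ref{lemma: aritmetica 1}, and items already established. A preliminary observation is that item (14) is a trivial weakening of axiom \eqref{BCA:quasi} of the new basis, obtained by dropping the $\Juno$-hypothesis; so only items (1)--(13) require genuine work. The driving idea throughout is that $\Jdue(\A)$ is closed under the Boolean operations and forms a Boolean algebra inside $\A$ in which $\Jzero$ acts as complement: every identity whose two sides lie in $\Jdue(\A)$ can thus be verified by Boolean manipulation.

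I would begin with the constants in item (1). For $\Jdue 1\thickapprox 1$ my strategy is to apply axiom \eqref{BCA:18} to $\Jdue(\Jdue 1\vee\neg\Jdue 1)$: by \eqref{BCA:14} the argument equals $1$, so the whole expression is $\Jdue 1$, while the expansion via \eqref{BCA:18}, after replacing $\Jdue\Jdue\varphi$ with $\Jdue\varphi$ and $\Jzero\Jdue\varphi$ with $\neg\Jdue\varphi$ (Lemma \ref{lemma: aritmetica 1}) and cancelling $a\wedge\neg a\thickapprox 0$ in the Boolean image, collapses to $\Jdue 1\vee\neg\Jdue 1\thickapprox 1$. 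The remaining three constants follow from $\Jzero\chi\thickapprox\Jdue\neg\chi$ and \eqref{BCA:6}. Items (2)--(4) are direct expansions of $\Jdue(1\vee\varphi)$ through \eqref{BCA:18} simplified with the constants; items (5)--(7) follow from Lemma \ref{lemma: aritmetica 1}(7)--(9) and \eqref{BCA:4}.

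The heart of the argument is the cluster (8)--(11), which together establish distributivity of the $J$-operators over $\wedge$. My approach is: first prove (9) from the computation $\Jzero(\varphi\wedge\psi)\thickapprox\Jdue(\neg\varphi\vee\neg\psi)$ (via De Morgan and \eqref{BCA:5}) expanded by \eqref{BCA:18}; then derive (8) using the identity $\Jdue\chi\vee\Jzero\chi\vee\Juno\chi\thickapprox 1$ together with (9), by Boolean bookkeeping inside $\Jdue(\A)$; then obtain (10) by assembling (8), (9) and the rewriting $\Jdue\chi\thickapprox\neg\Jzero\chi\wedge\neg\Juno\chi$ supplied by Lemma \ref{lemma: aritmetica 1}(7) together with \eqref{BCA:7}; and finally extract (11) from (10) by a Boolean simplification that exploits the disjointness $\Jdue\psi\wedge\Jzero\psi\thickapprox 0$ to cancel the cross-terms. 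Items (12) and (13) then follow easily: (12) via $\Jzero(\varphi\vee\psi)\thickapprox\Jdue(\neg\varphi\wedge\neg\psi)\thickapprox\Jdue\neg\varphi\wedge\Jdue\neg\psi$ using (11); and (13) by the $\mathsf{IBSL}$-distributivity \eqref{BCA:4} once one reduces to the Boolean image.

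I expect the main technical difficulty to lie in (8) and (10), where care is required to avoid circular appeal to (11): the natural reduction of $\Juno(\varphi\wedge\psi)$ would want a $\Jzero$-analogue of \eqref{BCA:18}, which is precisely (12) and ultimately rests on (11). The correct order is therefore to extract $\Juno(\varphi\wedge\psi)$ from the partition identity and (9), rather than by a direct disjunctive expansion, and to keep all intermediate computations within the Boolean structure of $\Jdue(\A)$ until (11) is in hand.
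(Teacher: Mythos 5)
Your overall architecture coincides with the paper's: constants first, then the $\Jdue(1\vee\varphi)$ identities, then the meet cluster in the order (9), (8)/(10), (11), with (12)--(14) as consequences; and your observation that item (14) is a trivial weakening of the quasi-identity in Theorem \ref{th: algebre di Bochvar2} is exactly how the paper disposes of it. However, your route to item (8) does not go through as stated. You propose to extract $\Juno(\varphi\wedge\psi)$ from the partition identity together with item (9), precisely to avoid a circular appeal to (11); but the partition identity yields $\Juno\chi\thickapprox\neg(\Jdue\chi\vee\Jzero\chi)$, so instantiating it at $\chi=\varphi\wedge\psi$ requires a value for $\Jdue(\varphi\wedge\psi)$ as well as for $\Jzero(\varphi\wedge\psi)$; item (9) supplies only the latter, and the former is exactly item (11), so the circularity reappears. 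The repair (and the paper's actual argument) is that $\neg\Juno\chi\thickapprox\Jdue\chi\vee\Jzero\chi\thickapprox\Jdue(1\vee\chi)$ by item (2) (whose proof in fact establishes $\Jdue(1\vee\varphi)\thickapprox\Jdue\varphi\vee\Jzero\varphi$), so that (8) is literally equivalent to (7): $\neg\Juno(\varphi\wedge\psi)\thickapprox\Jdue(1\vee(\varphi\wedge\psi))\thickapprox\Jdue(1\vee\varphi)\wedge\Jdue(1\vee\psi)\thickapprox\neg\Juno\varphi\wedge\neg\Juno\psi$. The needed ingredient is (7), not (9).

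The second gap concerns item (13). The identity $\varphi\vee\Jdue\varphi\thickapprox\varphi$ cannot be ``reduced to the Boolean image'' by distributivity: its two sides are arbitrary elements of $\A$, not elements of $\Jdue(\A)$, so the heuristic you announce at the outset does not apply to it. The only mechanism for lifting an identity between $J$-images to an identity between the elements themselves is the quasi-identity \eqref{BCA:quasi} of Theorem \ref{th: algebre di Bochvar2}, which your sketch never invokes for this item; the paper computes $\Jdue(\varphi\vee\Jdue\varphi)\thickapprox\Jdue\varphi$ (via \eqref{BCA:18} and Lemma \ref{lemma: aritmetica 1}) and $\Jzero(\varphi\vee\Jdue\varphi)\thickapprox\Jzero\varphi$ (via item (12)) and only then concludes by \eqref{BCA:quasi}. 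A smaller inaccuracy of the same flavour: item (6) is not immediate from Lemma \ref{lemma: aritmetica 1} and \eqref{BCA:4}; the paper derives it from items (2)--(5) through the inequality $\Jdue(0\wedge\varphi)\leq\neg\Jzero(0\wedge\varphi)\thickapprox\neg\Jdue(1\vee\varphi)$, and (7) in turn genuinely depends on (6) to cancel the term $\Jzero(1\vee\neg\varphi)\wedge\Jzero\psi$ in the expansion of \eqref{BCA:18}.
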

\begin{proof}
Let $\A\in\class{BCA}_2$ and $a,b\in A$.
\begin{enumerate}
\item $\Jdue 1 = \Jdue (\Jdue a\vee\neg\Jdue a) = (\Jdue\Jdue a\vee \Jdue\neg\Jdue a)\wedge (\Jdue\neg\Jdue a\vee \Jdue\neg\Jdue a) \vee (\Jdue\Jdue a\vee \Jdue\Jdue a) = (\Jdue a\wedge\Jzero\Jdue a) \vee \Jzero\Jdue a\vee\Jdue a = (\Jdue a\wedge\neg\Jdue a) \vee \neg\Jdue a\vee\Jdue a = 0 \vee \neg\Jdue a\vee\Jdue a = 0\vee 1 = 1$.\\
\noindent
$\Jdue 0 = \neg(\Jzero 0\vee\Juno 0) =\neg(1 \vee \Juno 1) = \neg 1 = 0$. The last equality follows from this one. \\





\item $\Jdue(1\vee a) = (\Jdue 1\wedge\Jdue a)\vee(\Jdue 1\wedge\Jzero a)\vee (\Jdue 0\wedge\Jdue a) = (1\wedge\Jdue a)\vee(1\vee\Jzero a)\vee (0\wedge\Jdue a) = \Jdue a\vee\Jzero a\vee 0 = \Jdue a\vee\Jzero a$.\\

\item It follows directly from the previous point, upon observing that $\Jdue\varphi\approx\Jzero\neg\varphi$. \\

\item Observe that $1\vee(0\wedge a) = (1\vee 0)\land (1\vee a) = 1\land(1\vee a) = 1\vee a$ (by Lemma \ref{lemma: aritmetica 1}-(1)).\\

\item Immediate from Lemma \ref{lemma: aritmetica 1}-(7).\\

\item Observe that, by the previous point, $\Jdue (0\land a)\leq\neg\Jzero(0 \land a) = \neg\Jdue(1\vee \neg a) = \neg\Jdue(1\vee a)$. Therefore $\Jdue (0\land a)\leq \neg\Jdue(1\vee a) =  \neg\Jdue(1\vee (0\wedge a)) = \neg(\Jdue (0\wedge a)\vee\Jzero (0\wedge a)) = \neg\Jdue (0\wedge a)\land\neg\Jzero (0\wedge a)\leq \neg\Jdue (0\wedge a) $, hence $\Jdue (0\land a) = 0$.\\

\item Applying De Morgan laws and \ref{BCA:18}, we have:
\begin{align*}
\Jdue(1\vee(a\land b)) =& \Jdue(1\vee\neg a\vee\neg b)
\\ = &\Jdue((1\vee\neg a)\vee\neg b)
\\ = & (\Jdue (1\vee\neg a) \wedge \Jzero b)\vee (\Jdue (1\vee\neg a) \wedge\Jdue a)\vee(\Jzero (1\vee\neg a) \wedge\Jzero b) 
\\ = & (\Jdue (1\vee a) \wedge \Jzero b)\vee (\Jdue (1\vee a) \wedge\Jdue a)\vee(\Jdue (0\land a) \wedge\Jzero b) 
\\ = & (\Jdue (1\vee a) \wedge \Jzero b)\vee (\Jdue (1\vee a) \wedge\Jdue a)\vee 0
\\ = & (\Jdue (1\vee a) \wedge \Jzero b)\vee (\Jdue (1\vee a) \wedge\Jdue a)
\\ = & \Jdue (1\vee a) \land(\Jzero b\vee \Jdue b)
\\ = & \Jdue (1\vee a) \land \Jdue (1\vee b).
\end{align*} \\

\item The claim is equivalent to (7). Indeed $\Juno(\varphi\land\psi)\approx\Juno\varphi\vee\Juno\psi$ iff $\neg\Juno(\varphi\land\psi)\approx\neg\Juno\varphi\land\neg\Juno\psi$ iff $\Jdue(\varphi\land\psi)\vee\Jzero (\varphi\land\psi)\approx (\Jdue\varphi\lor\Jzero\varphi)\land(\Jdue\psi\lor\Jzero\psi)$ iff $\Jdue(1\vee(\varphi\land\psi))\approx\Jdue(1\vee \varphi)\land\Jdue(1\vee \psi)$. \\

\item Easy calculation using \eqref{BCA:18}, De Morgan laws, distributivity and Lemma \ref{lemma: aritmetica 1}-(6).\\

\item Lemma \ref{lemma: aritmetica 1}-(6), we have 
\begin{align*}
\neg\Jdue (a\wedge b) =& \Jzero (a\wedge b)\vee\Juno (a\wedge b)
\\ = & (\Jdue a\wedge\Jzero b)\vee(\Jzero a\wedge\neg\Juno b)\vee\Juno (a\wedge b) & (\ref{eq: Jzero meet})
\\ = & (\Jdue a\wedge\Jzero b)\vee(\Jzero a\wedge\neg\Juno b)\vee\Juno b\vee\Juno a & (\ref{eq: Juno meet}) 
\\ = & (\Jdue a\wedge\Jzero b)\vee((\Jzero a \vee\Juno b)\wedge(\neg\Juno b\vee\Juno b))\vee\Juno a &
\\ = & (\Jdue a\wedge\Jzero b)\vee((\Jzero a \vee\Juno b)\wedge 1)\vee\Juno a &
\\ = & (\Jdue a\wedge\Jzero b)\vee \Jzero a \vee\Juno b\vee\Juno a &
\\ = & (\Jdue a\wedge\Jzero b)\vee \neg\Jdue a \vee\Juno b, &
\end{align*} 
thus the conclusion follows by De Morgan laws.\\

\item By the previous point, we have
\begin{align*}
\Jdue (a\wedge b) =& \neg(\Jdue a\wedge\Jzero b)\wedge\Jdue a\wedge\neg\Juno b
\\ = & (\neg\Jdue a \vee \neg\Jzero b)  \wedge \Jdue a \wedge \neg\Juno b 
\\ = &((\neg\Jdue a\wedge \Jdue a)\vee (\neg\Jzero b  \wedge \Jdue a)) \wedge \neg\Juno b
\\ = &( 0\vee (\neg\Jzero b  \wedge \Jdue a)) \wedge \neg\Juno b  
\\ = &\neg\Jzero b  \wedge \Jdue a \wedge \neg\Juno b
\\ = &\Jdue a \wedge \Jdue b.
\end{align*} 

\item $\Jzero(a\vee b) = \Jdue(\neg a\wedge\neg b) = \Jdue\neg a\wedge\Jdue\neg b = \Jzero a\wedge\Jzero b$. \\

\item We show that the antecedent of the quasi-identity \eqref{BCA:quasi} is satisfied, so is the consequent. $\Jdue (a\vee\Jdue a) = (\Jdue a\land\Jdue\Jdue a)\lor(\Jdue\neg a\wedge\Jdue\Jdue a)\lor(\Jdue a\land\Jdue\neg\Jdue a) = (\Jdue  a\land\Jdue a)\vee(\Jzero a\land\Jdue a)\vee (\Jdue a\land\Jdue\Jzero a) = \Jdue a\vee(\Jzero a\land\Jdue a)\vee (\Jzero a\land\Jdue a) = \Jdue a\vee(\Jzero a\land\Jdue a) = \Jdue a $, where in the last passage we have used the dual version of (9).\\
\noindent
$\Jzero(a\vee \Jdue a) = \Jzero a\land\Jzero\Jdue a = \Jzero a\land\neg\Jdue a = \Jzero a\land(\Jzero a\lor\Juno a) = \Jzero a$. Thus, by the quasi-identity \eqref{BCA:quasi} we have the conclusion. \\
\noindent
The case of $k=1$ is proved analogously.\\

\item We just have to show that $\Jzero \varphi \thickapprox \Jzero \psi  \;\&\; \Jdue \varphi \thickapprox \Jdue \psi $ implies $\Jzero \varphi \thickapprox \Jzero \psi \;\&\; \Juno \varphi \thickapprox \Juno \psi  \;\&\; \Jdue \varphi \thickapprox \Jdue \psi$. Suppose $\Jzero a = \Jzero b$ and $\Jdue a = \Jdue b$. Then $\Juno a = \neg(\Jdue a\vee\Jzero a) = \neg(\Jdue b\vee\Jzero b) = \Juno b $.
\end{enumerate}
\end{proof}

\begin{proof}[Proof of Theorem \ref{th: algebre di Bochvar2}]
The original  axiomatization of $\class{BCA}$ (Definition \ref{def: algebre di Bochvar}) includes all the identities (1)-(13); all the remaining identities (and quasi-identities) appearing in Definition \ref{def: algebre di Bochvar} but not in Theorem \ref{th: algebre di Bochvar2} have been shown to follow, from the axiomatization provided in Theorem \ref{th: algebre di Bochvar2} in Lemmas \ref{lemma: aritmetica 1} and \ref{lemma: aritmetica 2}.
\end{proof}

%

\noindent
\textbf{Ackowledgements.} S. Bonzio acknowledges the support by the Italian Ministry of Education, University and Research through the PRIN 2022 project DeKLA (``Developing Kleene Logics and their Applications'', project code: 2022SM4XC8), the support of the GOACT project - funded by Fondazione di Sardegna -- and partial support by the MOSAIC project (H2020-MSCA-RISE-2020 Project 101007627). He gratefully acknowledges also the support of the INDAM GNSAGA (Gruppo Nazionale per le Strutture Algebriche, Geometriche e loro Applicazioni). The work of Michele Pra Baldi was partially founded by the Juan de la Cierva fellowship 2020 (FJC2020-044271-I). He also acknowledges the  CARIPARO Foundation excellence project (2020-2024): ``Polarization of irrational collective beliefs in post-truth societies''.


\end{document}